\documentclass{amsart}
\usepackage{a4wide}

\usepackage{tabularx,ragged2e,booktabs,caption}
\usepackage{amsthm}                   	
\usepackage{amsmath}                  	
\usepackage{amsfonts}                 	
\usepackage{amssymb}                  	
\usepackage{mathrsfs}                 	
\usepackage{mathtools}			
\usepackage{pstricks-add}
\usepackage{xy}
\usepackage{graphicx,subfigure}
\usepackage{hyperref}
\xyoption{all}
\usepackage{multirow}


\newsavebox\TBox
\def\textoverline#1{\savebox\TBox{#1}
  \makebox[0pt][l]{#1}\rule[1.3\ht\TBox]{\wd\TBox}{0.4pt}}

\font\myfont=cmr6
\def\st#1 {  {\hbox{\myfont \begin{tabular}{c} {\, $\mbox{\myfont #1}$}  \end{tabular}  }} }
\def\state#1#2 {  {\hbox{\myfont \begin{tabular}{c}\noalign{\vskip -0.1cm} {\; #1}
\\[-0.5mm] \noalign{\vskip -0.1cm} {\; #2} \end{tabular}  }} }
\def\statev#1#2#3 {  {\hbox{\myfont \begin{tabular}{c}\noalign{\vskip 0.1cm} {\; #1}
\\[-1.0mm] \noalign{\vskip -0.0cm} {\; #2} \\[-1.0mm] \noalign{\vskip -0.0cm} {\; #3} \end{tabular}  }} }

\font\myfonta=cmr10
\def\vect#1 {  {\hbox{\myfonta \begin{tabular}{c} {\, #1}  \end{tabular}  }} }

\font\myfontb=cmr10
\def\vectv#1#2#3 {  {\hbox{\myfontb \begin{tabular}{c}\noalign{\vskip 0.1cm} {\; #1}
\\[-1.1mm] \noalign{\vskip -0.0cm} {\; #2} \\[-1.1mm] \noalign{\vskip -0.0cm} {\; #3} \end{tabular}  }} }

\usepackage{epstopdf}


\theoremstyle{definition}
\newtheorem{definition}{Definition}[section]
\newtheorem{remark}[definition]{Remark}

\theoremstyle{plain}
\newtheorem{theorem}[definition]{Theorem}
\newtheorem{lemma}[definition]{Lemma}
\newtheorem{conjecture}[definition]{Conjecture}
\newtheorem{problem}[definition]{Problem}
\newtheorem{corollary}[definition]{Corollary}
\newtheorem{proposition}[definition]{Proposition}
\newtheorem*{theorem*}{Theorem}
\newtheorem{algo}[definition]{Algorithm}
\numberwithin{equation}{section}

\newcommand{\mybinom}[2]{\tiny\left\{
\substack{#1\\#2}\right\}}


\newcommand{\R}{\ensuremath{\mathbb{R}}}     	
\newcommand{\N}{\ensuremath{\mathbb{N}}}     	
\newcommand{\Z}{\ensuremath{\mathbb{Z}}}     	
\newcommand{\D}{\ensuremath{\mathcal{D}}}     	
\newcommand{\nP}{\ensuremath{\mathcal{P}}}
\newcommand{\nQ}{\ensuremath{\mathcal{Q}}}
\newcommand{\nS}{\ensuremath{\mathcal{S}}}   	
\newcommand{\nR}{\ensuremath{\mathcal{R}}}   	
\newcommand{\eps}{\varepsilon}
\newcommand{\scr}{\scriptsize}
\newcommand{\Bold}{\boldsymbol}
\newcommand{\B}{\ensuremath{\boldsymbol{B}}}
\newcommand{\BO}{\ensuremath{\boldsymbol{O}}}
\newcommand{\BA}{\ensuremath{\boldsymbol{A}}}

\begin{document}
\author{J\"org Thuswaldner }\author{Shu-qin Zhang}
\address{Chair of Mathematics and Statistics, University of Leoben, Franz-Josef-Strasse 18, A-8700 Leoben, Austria}
\email{joerg.thuswaldner@unileoben.ac.at}
\email{shuqin.zhang@unileoben.ac.at}
\title[Self-affine tiles]{On self-affine tiles whose boundary is a sphere}

\dedicatory{Dedicated to Val\'erie Berth\'e on the occasion of her 50$^{\mathit{th}}$ birthday}

\subjclass[2010]{Primary: 
28A80, 		
57M50, 		
57N05. 		
Secondary: 
51M20, 		
52C22, 		
54F65.  		
}
\keywords{Self-affine sets, tiles and tilings, low dimensional topology, truncated octahedron}
\date{\today}
\thanks{Supported by FWF project P29910, by FWF-RSF project I3466, and by the FWF doctoral program W1230}

\maketitle
	
\allowdisplaybreaks

\begin{abstract}
Let $M$ be a $3\times 3$ integer matrix each of whose eigenvalues is greater than $1$ in modulus and let $\mathcal{D}\subset\Z^3$ be a set with $|\D|=|\det M|$, called {\em digit set}.
The set equation $MT = T+\D$ uniquely defines a nonempty compact set $T\subset \R^3$. If $T$ has positive Lebesgue measure it is called a $3$-dimensional {\em self-affine tile}. In the present paper we study topological properties of $3$-dimensional self-affine tiles with {\em collinear digit set}, {\em i.e.}, with a digit set of the form $\D=\{0,v,2v,\ldots, (|\det M|-1)v\}$ for some $v\in\Z^3\setminus\{0\}$. 
We prove that the boundary of such a tile $T$ is homeomorphic to a $2$-sphere whenever its set of \emph{neighbors} 
in a lattice tiling which is induced by $T$ in a natural way contains $14$ elements. The combinatorics of this lattice tiling is then the same as the one of the {\em bitruncated cubic honeycomb}, a body-centered cubic lattice tiling by {\it truncated octahedra}. We give a characterization of $3$-dimensional self-affine tiles with collinear digit set having $14$ neighbors in terms of the coefficients of the characteristic polynomial of $M$. In our proofs we use results of R.~H.~Bing on the topological characterization of spheres. 
\end{abstract}

\section{Introduction}
Let $m\in \N$ and suppose that $M$ is an $m\times m$ integer matrix which is {\em expanding}, {\em i.e.}, each of its eigenvalues is greater than $1$ in modulus. Let $\D\subset\Z^m$ be a set of cardinality $|\det M|$ which is called {\em digit set}.  By a result of Hutchinson \cite{Hutchinson81}, there exists a unique nonempty compact subset $T=T(M,\D)$ of $\R^m$ such that 
\begin{equation}\label{eq:SetEquationTile}
MT=T+\D.
\end{equation}
If $T$ has positive Lebesgue measure (which by  Bandt~\cite{Bandt91} is always the case if $\D$ is a complete set of \begin{figure}[h]
\vskip -0.4cm
\includegraphics[trim={0 0 0 -0.5cm},clip=true,width=.45\textwidth]{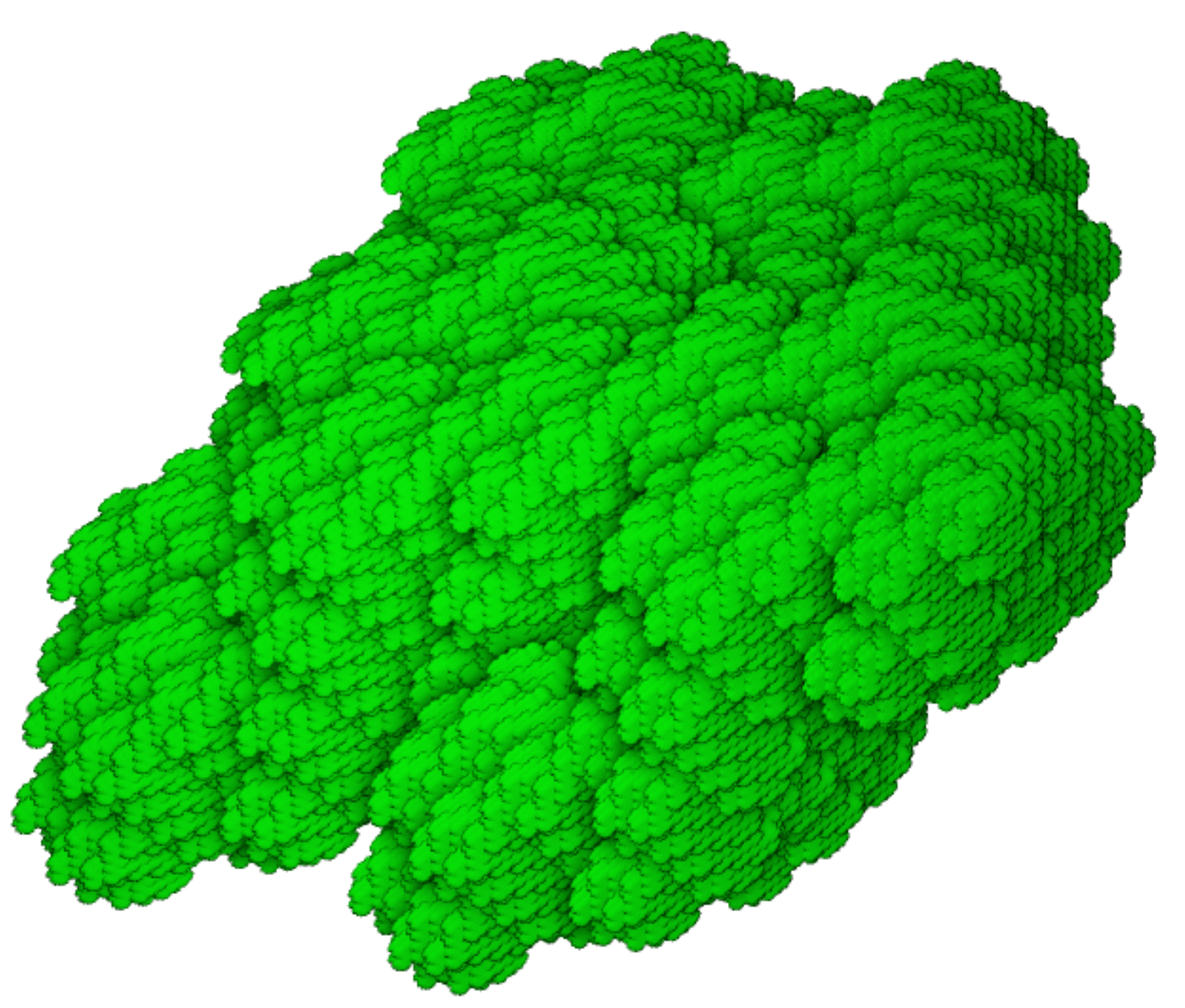} 
\hskip 0.5cm
\includegraphics[trim={-8 00 00 0},clip=true,origin=c,width=.45\textwidth]{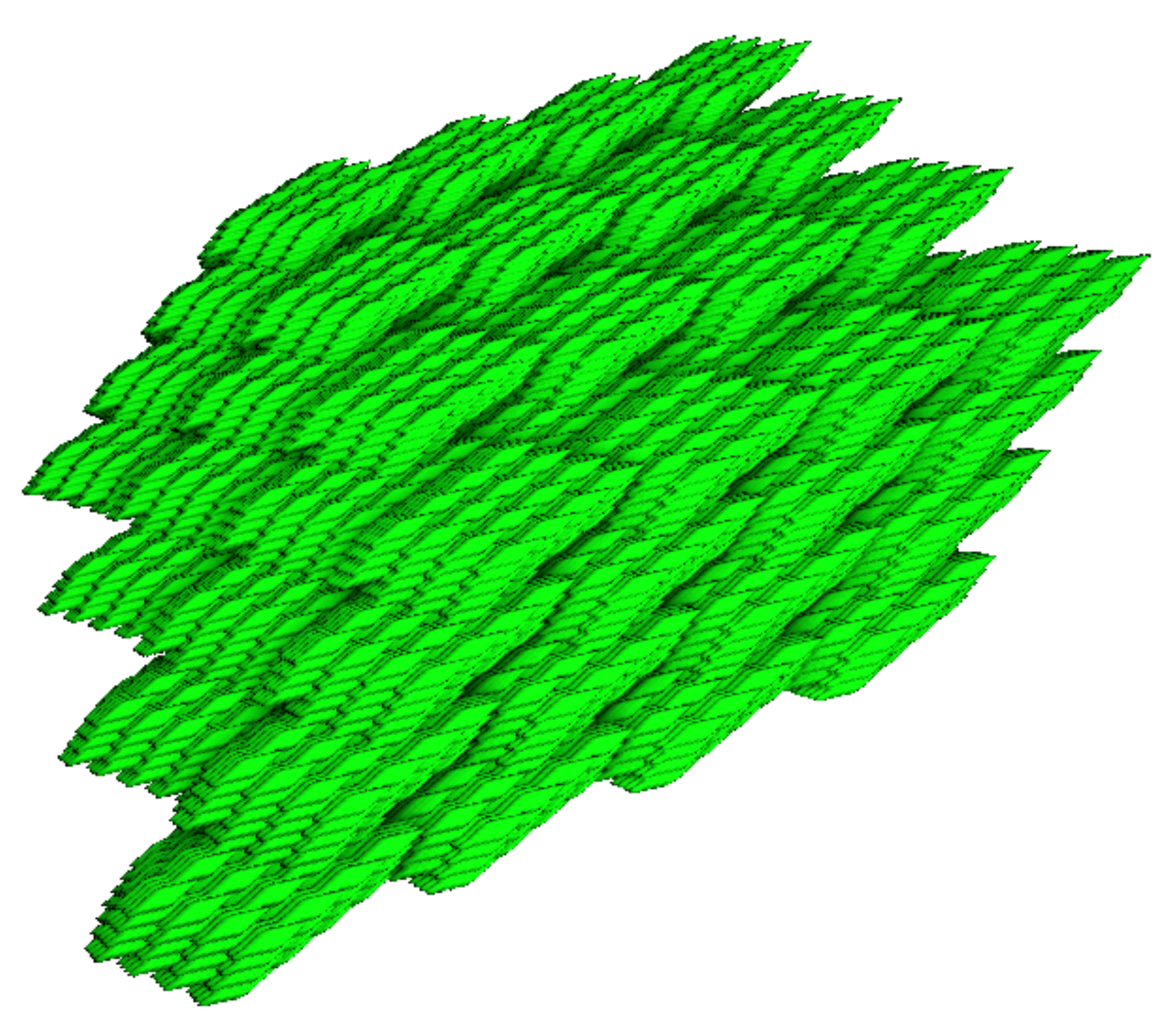}  
\caption{Two examples of $3$-dimensional self-affine tiles.\label{fig:3d}}
\end{figure}
coset representatives of $\Z^m/M\Z^m$) we call it a {\em self-affine tile}. Images of two $3$-dimensional self-affine tiles with typical ``fractal'' boundary are shown in Figure~\ref{fig:3d} (they were created using IFStile~\cite{wiki:xxx}).
Initiated by the work of
Kenyon~\cite{Kenyon92} self-affine tiles have been studied extensively in the literature. A systematic theory of self-affine tiles including the lattice tilings they often induce has been established in the 1990ies by Gr\"ochenig and Haas~\cite{GroechenigHaas94} as well as Lagarias and Wang~\cite{LagariasWang96b,LagariasWang96a,LagariasWang97}.  Since then, self-affine tiles have been investigated in many contexts. One field of interest, the one to which the present paper is devoted, is the topology of self-affine tiles. Based on the pioneering work of Hata~\cite{Hata85} on topological properties of attractors of iterated function systems many authors explored the topology of self-affine tiles.  For instance, Kirat and Lau~\cite{KiratLau00} and Akiyama and Gjini~\cite{AkiyamaGjini04,AkiyamaGjini05} dealt with connectivity of tiles. Later, finer topological properties of $2$-dimensional self-affine tiles came into the focus of research. Bandt and Wang~\cite{BandtWang01} gave criteria for a self-affine tile to be homeomorphic to a disk (see also Lau and Leung~\cite{LauLeung07}), Ngai and Tang~\cite{NgaiTang05} dealt with planar connected self-affine tiles with disconnected interior, and Akiyama and Loridant~\cite{AkiyamaLoridant11} provided parametrizations of the boundary of planar tiles.

Only a few years ago first results on topological properties of $3$-dimensional self-affine tiles came to the fore. Bandt~\cite{Bandt10} studied the combinatorial topology of $3$-dimensional twin dragons. Very recently, Conner and Thuswaldner~\cite{ConnerThuswaldner0000} gave criteria for a $3$-dimensional self-affine tile to be homeomorphic to a $3$-ball by using upper semi-continuous decompositions and a criterion of Cannon~\cite{Cannon73} on tame embeddings of $2$-spheres. Deng {\it et al.}~\cite{DengLiuNgai18} showed that a certain class of $3$-dimensional self-affine tiles is homeomorphic to a $3$-ball.

Let $M$ be an expanding $m\times m$ integer matrix. We say that $\D$ is a {\em collinear digit set} for $M$ if there is a vector $v\in\Z^m\setminus\{0\}$ such that
\begin{equation}\label{eq:collinearD}
\D=\{0,v,2v,\ldots, (|\det M|-1)v\}.
\end{equation}
If $\D$ has this form we call a self-affine tile $T=T(M,\D)$ a {\em self-affine tile with collinear digit set} (such tiles have been studied by many authors in recent years, see {\it e.g.}~Lau and Leung~\cite{LauLeung07}).
In the present paper we establish a general characterization of $3$-dimensional self-affine tiles with collinear digit set whose boundary is homeomorphic to a $2$-sphere. In its proof we use a result of Bing~\cite{Bing51} that provides a topological characterization of $m$-spheres for $m\le 3$ (although Bing does not mention self-affine sets, his characterization is very well suited for self-affine structures). 

Before we state our main results we introduce some notation. Let $T=T(M,\D)$ be a self-affine tile in $\R^m$ with collinear digit set and define the set of \emph{neighbors} of $T$ by 
\begin{equation}\label{eq:neigh}
\nS=\{\alpha\in \Z[M,\D]\setminus\{0 \}; \; T\cap (T+\alpha)\neq \emptyset\},
\end{equation}
where 
\[
\Z[M,\D] = \Z[\D,M\D, \dots, M^{m-1}\D] \subset \Z^m
\]
is the smallest $M$-invariant lattice containing $\D$. This definition is motivated by the fact that the collection $\{T+ \alpha;\, \alpha \in \Z[M,\D]\}$ often tiles the space $\R^m$ with overlaps of Lebesgue measure $0$ ({\it cf.\ e.g.}~Lagarias and Wang~\cite{LagariasWang97} and note that one can always achieve that $\Z[M,\D]=\Z^m$ by applying an affine transformation; see \cite[Lemma~2.1]{LagariasWang96b}). The translated tiles $T+\alpha$ with $\alpha\in\nS$ are then those tiles of this tiling which touch the ``central tile'' $T$. It is clear that $\nS$ is a finite set since $T$ is compact by definition. Set 
\begin{equation}\label{eq:bgamma}
\B_{\alpha}=T\cap(T+\alpha) \qquad(\alpha \in \Z[M,\D]\setminus\{0\}). 
\end{equation}
More generally, for $\ell \ge 1$ and a subset $\Bold{\alpha} = \{\alpha_1,\ldots, \alpha_{\ell}\} \subset \Z[M,\D]\setminus\{0\}$ we define the {\em $(\ell+1)$-fold intersections} by
\[
\B_{\Bold{\alpha}} = \B_{\alpha_1,\ldots, \alpha_{\ell}} = T \cap (T+\alpha_1) \cap \cdots \cap (T+\alpha_{\ell}) \qquad (\Bold{\alpha}\subset \Z[M,\D]\setminus\{0\}).
\]
Compactness of $T$ again yields that there exist only finitely many sets $\Bold{\alpha}\subset \Z[M,\D]$ with $\B_{\Bold{\alpha}}\not=\emptyset$.

\begin{theorem}\label{Main-1}
Let $T=T(M,\D)$ be a $3$-dimensional self-affine tile with collinear digit set and assume that the characteristic polynomial $\chi(x)=x^3+Ax^2+Bx+C$ of $M$ satisfies $1\le A\le B < C$. Then $\{T+ \alpha;\, \alpha \in \Z[M,\D]\}$ tiles the space $\R^3$ with overlaps of Lebesgue measure $0$.
If $T$ has $14$ neighbors then the following assertions hold.
 \begin{enumerate}
 \item \label{Main1.sphere} The boundary $\partial T$ is homeomorphic to a $2$-sphere. 
 \item \label{Main1.disk}  If $\alpha\in \Z[M,\D]\setminus\{0\}$, the $2$-fold intersection $\B_{\alpha}$ is homeomorphic to a closed disk for each $\alpha\in \nS$ and empty otherwise.
\item \label{Main1.loop} If $\Bold{\alpha}\subset\Z[M,\D]\setminus\{0\}$ contains two elements, the $3$-fold intersection $\B_{\Bold{\alpha}}$ is either homeomorphic to an arc or empty. The $36$ sets $\Bold{\alpha}$ with $\B_{\Bold{\alpha}}\not=\emptyset$ can be given explicitly.
 \item \label{Main1.point} If $\Bold{\alpha}\subset\Z[M,\D]\setminus\{0\}$ contains three elements, the $4$-fold intersection $\B_{\Bold{\alpha}}$ is either a single point or empty. The $24$ sets $\Bold{\alpha}$ with $\B_{\Bold{\alpha}}\not=\emptyset$ can be given explicitly.
 \item \label{Main1.empty} If $\Bold{\alpha}\subset\Z[M,\D]\setminus\{0\}$ contains $\ell \ge 4$ elements, the $(\ell+1)$-fold intersection $\B_{\Bold{\alpha}}$ is always empty.
 \end{enumerate}  
\end{theorem}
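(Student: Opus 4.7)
The plan is to realize every nonempty intersection $\B_{\Bold{\alpha}}$ as the attractor of a graph-directed iterated function system (GIFS) built from $M$ and $\D$, and then apply Bing's topological characterizations of the $2$-sphere, of the $2$-disk and of the arc to the resulting pieces. First I would establish the tiling property of $\{T+\alpha;\,\alpha\in\Z[M,\D]\}$; under the hypotheses $1\le A\le B < C$ and collinearity of $\D$, it is known that $\D$ is a complete set of coset representatives of $\Z[M,\D]/M\Z[M,\D]$, so the Lagarias--Wang criterion delivers a measure-theoretic lattice tiling. This gives $\partial T = \bigcup_{\alpha\in\nS} \B_\alpha$, and every higher intersection lies on $\partial T$.

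Next I would exploit the set equation $MT=T+\D$ to derive, for each nonempty intersection $\B_{\Bold{\alpha}}$, a decomposition
\[
M\B_{\Bold{\alpha}} \;=\; \bigcup_{(d,\Bold{\beta})\in E(\Bold{\alpha})} \big(\B_{\Bold{\beta}} + d\big),
\]
where the index set $E(\Bold{\alpha})$ is determined by solving the finitely many vector equations $M\alpha_i = d_0 - d_i + \beta_i$ with $d_0,d_i\in\D$. Iterating this bookkeeping produces a finite \emph{neighbor graph} whose vertices are exactly the nonempty intersections. The hypothesis $|\nS|=14$ combined with the conditions on the characteristic polynomial forces this graph to be exactly the one whose vertex set corresponds to the faces, edges and vertices of a truncated octahedron: $14$ neighbors, $36$ nonempty $3$-fold intersections (one for each edge of the truncated octahedron), and $24$ nonempty $4$-fold intersections (one for each vertex). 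The enumeration in parts (\ref{Main1.disk})--(\ref{Main1.point}) would then be obtained by listing these vertices explicitly from the arithmetic of $M\alpha$, and emptiness of all $(\ell+1)$-fold intersections with $\ell\ge 4$ (part (\ref{Main1.empty})) would follow because any such chain propagated through the GIFS forces a subset of four pairwise intersecting neighbors containing no truncated-octahedron vertex, yielding a contradiction via the graph.

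Having fixed the combinatorics, I would then prove the topological assertions in increasing dimension, using Bing's characterizations each time. The $4$-fold intersections are singletons because in the GIFS their ``outgoing'' maps are contractions via $M^{-1}$, so their diameter is zero. For the $3$-fold intersections I would verify that each is a Peano continuum (standard for attractors of GIFS whose neighbor graph is primitive) and then show it has exactly two non-cut points, namely the two $4$-fold endpoints attached to it by the truncated octahedron combinatorics; Bing's arc characterization then identifies it with an arc. For the $2$-fold intersections I would check that $\B_\alpha$ is a Peano continuum and that its topological boundary inside $\partial T$ is the simple closed curve obtained by concatenating the $3$-fold arcs indexed by the edges of the corresponding face of the truncated octahedron, meeting at the $4$-fold vertices; Bing's disk criterion then yields part (\ref{Main1.disk}). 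Finally, $\partial T$ is assembled as $14$ such disks glued along these $36$ arcs and $24$ vertices with the face lattice of the truncated octahedron, and Bing's characterization of the $2$-sphere gives part (\ref{Main1.sphere}).

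The main obstacle I expect is step in which the $2$-fold intersections $\B_\alpha$ are identified with disks. Local connectedness and the identification of the boundary curve are essentially combinatorial consequences of the GIFS, but the missing ingredient is the absence of ``cut arcs'' and of interior holes inside $\B_\alpha$. Here one must carefully feed the GIFS structure of $\B_\alpha$ into Bing's hypotheses, typically by showing inductively that no simple closed curve in $\B_\alpha$ other than its boundary separates it; this is where the numeric constraint $1\le A\le B< C$ enters in a crucial way to control which subpieces of $M\B_\alpha$ can touch, and is the step that the bulk of the paper will presumably be devoted to.
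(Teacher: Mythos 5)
Your overall architecture --- reduce to a normal form, compute the neighbor graph and the graphs of $3$- and $4$-fold intersections, read off the truncated-octahedron combinatorics, and then climb through Bing's characterizations --- is exactly the paper's. The tiling statement, the singleton nature of the $4$-fold intersections, and the emptiness of the higher intersections are handled essentially as you describe. For the arcs the paper takes a slightly different route: rather than counting non-cut points of a single $\B_{\Bold{\alpha}}$, it proves that the full union $L_\alpha=\bigcup_{\beta}\B_{\alpha,\beta}$ is a simple closed curve via Bing's partitioning criterion (showing by induction that the $k$-th subdivisions of $L_\alpha$ form circular chains), and each $3$-fold intersection is then an arc simply because it is a proper Peano subcontinuum of a simple closed curve. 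Note also that local connectedness is not a consequence of primitivity of the GIFS graph; it must be verified by checking that the Hata graphs of all successor collections are connected (the Luo--Akiyama--Thuswaldner criterion), which the paper does by explicit case analysis of the subdivision chains.

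The genuine gap is in your treatment of parts (1) and (2), and it sits precisely at the step you flag as the main obstacle. The paper does not prove that $\B_\alpha$ is a disk and then assemble the sphere; it reverses the order. It first shows $\partial_{\partial T}\B_\alpha = L_\alpha$, which requires a dimension-theoretic argument ($\mathrm{dim}(\B_\alpha)\ge 2$ because $\partial T$ cuts $\R^3$, while $L_\alpha$ has dimension $1$) together with a delicate lemma asserting that the boundary operator relative to $\partial T$ commutes with the subdivision maps $f_{d_1\cdots d_{k-1}}$ --- nontrivial because these maps do not preserve the ambient space $\partial T$. With the boundaries of all subdivision pieces identified as simple closed curves, Bing's sphere criterion is applied directly to $\partial T$ using the partitionings coming from the neighbor graph, the third condition being checked via an explicit linear ordering of the subdivision pieces of each $\B_\alpha$. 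Only then does one conclude that $\B_\alpha$ is a closed disk, by the Sch\"onflies theorem applied inside the $2$-sphere $\partial T$. Your route would instead require a standalone disk characterization for $\B_\alpha$, ruling out cut arcs and interior holes with no ambient sphere available, and you give no mechanism for doing this; the paper's reversal of the order is exactly what makes the argument close.
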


\begin{remark}
Theorem~\ref{Main-1}~(\ref{Main1.sphere}) and (\ref{Main1.disk}) imply that for $\alpha\in \nS$ the boundary $\partial_{\partial T} \B_{\alpha}$ is a simple closed curve. We denote by $\partial_X$  the boundary taken w.r.t.\ the subspace topology on $X \subset \R^3$.
\end{remark}

\begin{remark}
Since the characteristic polynomial $\chi$ of $M$ in Theorem~\ref{Main-1} satisfies $1\le A\le B < C$, the matrix $M$ is expanding (see Lemma~\ref{lem:expand} below). The fact that $\chi(-1) >0$ implies that $M$ has a real eigenvalue which is less than $-1$. The remaining eigenvalues of $M$ may be real or nonreal.
\end{remark}

There is an interesting relation to polyhedral geometry and crystallography (see also Remark~\ref{rem:crystal}). The arrangement of the tiles in the tiling $T+\mathbb{Z}^3$ in Theorem~\ref{Main-1} with 14 neighbors, 36 $3$-fold intersections, and 24 $4$-fold intersections coincides with the arrangement of the \emph{bitruncated cubic honeycomb} (also known as {\it truncoctahedrille}), a body-centered cubic lattice tiling induced by a  {\it truncated octahedron} $O$ (see {\it e.g.}~Conway {\it et al.}~\cite[p.~295 and p.~329f.]{CBG:08} or Barnes~\cite[p.~68]{Barnes:12}; Cromwell~\cite[Figure~2.19 (b)]{Cromwell:97} and \cite[p.~333ff]{CBG:08} mention the related {\it muoctahedron} discovered by Petrie and Coxeter). Denote this body-centered lattice by $\Lambda$. Then by Theorem~\ref{Main-1} there exists a homeomorphism $h:O\to T$ that maps $\ell$-fold intersections to $\ell$-fold intersections in the respective tiling homeomorphically. Extending $h$ equivariantly we obtain the following result.

\begin{corollary}\label{cor:O}
If the self-affine tile $T$ satisfies the conditions of Theorem~\ref{Main-1} then
$\partial T + \mathbb{Z}^3$ is homeomorphic to $\partial O + \Lambda$.
\end{corollary}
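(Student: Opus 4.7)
The plan is to extend the homeomorphism $h:O\to T$ of the preceding paragraph equivariantly to a homeomorphism of the full boundary tilings. Write the neighbors of $O$ as $\{\beta_1,\ldots,\beta_{14}\}\subset \Lambda$ (grouped into $7$ antipodal pairs by the centro-symmetry of the truncated octahedron) and the neighbors of $T$ as $\{\alpha_1,\ldots,\alpha_{14}\}\subset \nS$. The homeomorphism $h$ induces a bijection $\sigma:\beta_i\mapsto\alpha_{\sigma(i)}$ by sending $O\cap (O+\beta_i)$ onto $\B_{\sigma(\beta_i)}$, and $\sigma$ preserves the combinatorics of all $\ell$-fold intersections. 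Since the incidence pattern of $14$ neighbors, $36$ edges and $24$ vertices of a body-centered cubic tiling by truncated octahedra determines the lattice $\Lambda$ together with its neighbor set up to linear isomorphism, $\sigma$ extends uniquely to a group isomorphism $\phi:\Lambda\to\Z^3$ with $\phi(\beta_i)=\sigma(\beta_i)$; in particular $\phi$ carries antipodal pairs to antipodal pairs.

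Next I refine $h$ so that it intertwines the respective translations on every face: $h(x+\beta_i)=h(x)+\phi(\beta_i)$ whenever $x\in O\cap (O-\beta_i)$. One first constructs $h$ on the $0$- and $1$-skeleta of $\partial O$ in a $\Lambda$-equivariant way (possible because the $3$- and $4$-fold intersections of $\partial O$ occur in antipodal orbits under the $\Lambda$-action, giving a consistent identification of their $h$-images in $\partial T$), then extends freely over the interior of one representative $2$-cell from each antipodal face pair, and transports the extension by the prescribed translation identity to the opposite face. With this refined $h$ in hand, define $H:\partial O+\Lambda\to\partial T+\Z^3$ by
\[
H(x+\beta)=h(x)+\phi(\beta)\qquad(x\in\partial O,\ \beta\in\Lambda).
\]
Well-definedness on overlaps $(\partial O+\beta)\cap(\partial O+\beta')$ follows from the equivariance of $h$: when $\beta-\beta'=\beta_i$ is a neighbor and $x+\beta=y+\beta'$ with $y=x+\beta_i$, one computes $h(y)+\phi(\beta')=h(x+\beta_i)+\phi(\beta')=h(x)+\phi(\beta_i)+\phi(\beta')=h(x)+\phi(\beta)$. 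The map $H$ is then a continuous bijection which is a homeomorphism tilewise, and hence a global homeomorphism of the two boundary tilings.

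The principal obstacle will be the equivariant refinement of $h$: one must impose translation-equivariance on all seven face pairs of $\partial O$ simultaneously without any inconsistency along the edges and vertices where they meet. This is what forces the skeletal construction in the second paragraph and is enabled by the fact that every $\ell$-fold intersection with $\ell\ge 2$ belongs to an antipodal orbit of the $\Lambda$-action on the cell complex of $\partial O$; the compatibility imposed on a face by equivariance along one neighbor is thus automatically consistent with that imposed on its adjacent faces.
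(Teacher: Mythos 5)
Your proposal is correct and follows essentially the same route as the paper, which offers no detailed proof beyond the one-line sketch preceding the corollary (``Extending $h$ equivariantly we obtain the following result''); your equivariant refinement of $h$ via the skeleta of $\partial O$, the induced isomorphism $\phi:\Lambda\to\Z^3$, and the gluing formula $H(x+\beta)=h(x)+\phi(\beta)$ are exactly the details that sketch leaves implicit.
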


\begin{figure}[h]
\includegraphics[trim={0 70 0 50},clip=true,origin=c,width=.45\textwidth]{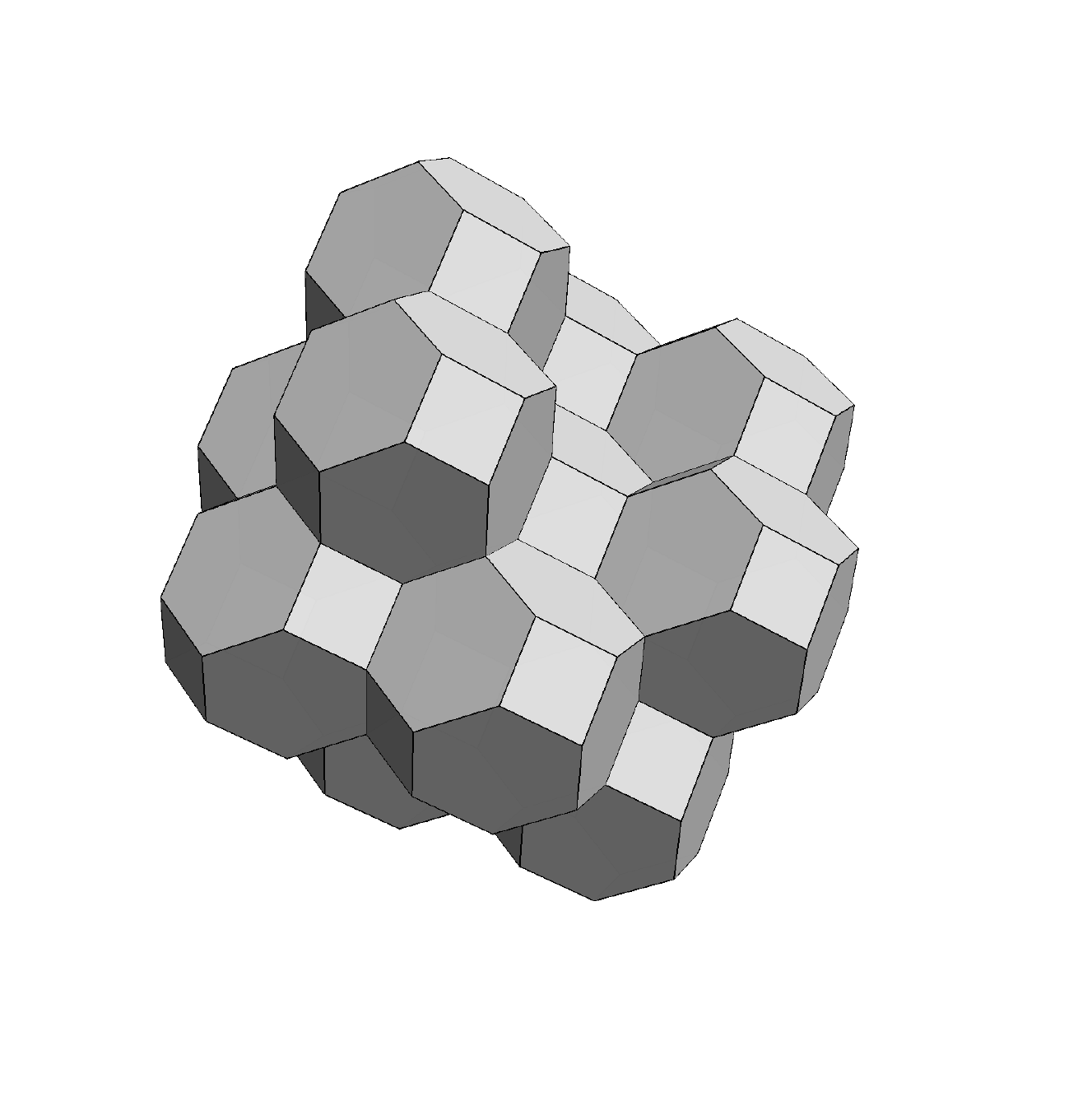}  
\caption{A patch of the bitruncated cubic honeycomb.\label{fig:honey}}
\end{figure}
A patch from $O + \Lambda$ is depicted in Figure~\ref{fig:honey}.  Corollary~\ref{cor:O} says that the polyhedral structure of all tiles satisfying the conditions of Theorem~\ref{Main-1} is the same. However, the way how the intersections $B_\alpha$, $\alpha\in \nS$, are subdivided when $T$ is subdivided according to the set equation \eqref{eq:SetEquationTile} heavily depends on the coefficients of the characteristic polynomial $\chi$. This is why it is difficult to treat the whole class of self-affine tiles at once in Theorem~\ref{Main-1}.

Theorem~\ref{Main-1} raises the question when $3$-dimensional self-affine tiles with collinear digit set have $14$ neighbors. This question is answered as follows.

\begin{theorem}\label{count-neigh-General}
Let $T=T(M,\D)$ be a $3$-dimensional self-affine tile with collinear digit set and assume that the characteristic polynomial $\chi(x)=x^3+Ax^2+Bx+C$ of $M$ satisfies $1\le A\le B < C$.  

Then $T$ has $14$ neighbors if and only if $A, B, C$ satisfy one of the following conditions.
\begin{itemize}
\item[(1)] $1\leq A<B<C$, $B\geq 2A-1$, and $C\geq 2(B-A)+2$;
\item[(2)] $1\leq A<B<C$, $B<2A-1$, and $C\geq A+B-2$.
\end{itemize}
\end{theorem}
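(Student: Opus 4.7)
The plan is to reduce to a normal form and then run a finite combinatorial enumeration. Since by \cite[Lemma~2.1]{LagariasWang96b} we may assume $\Z[M,\D]=\Z^3$, and since the digit set is collinear, I would choose the basis $\{v, Mv, M^2v\}$ of $\R^3$ so that $M$ becomes the companion matrix of $\chi$, $v = e_1$, and $\D = \{0, e_1, \ldots, (C-1)e_1\}$. In this basis $Me_1 = e_2$, $Me_2 = e_3$, $Me_3 = -Ce_1 - Be_2 - Ae_3$, so every coordinate computation is explicit in $A,B,C$.

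The key technical tool is the standard neighbor criterion (as used in the Scheicher--Thuswaldner contact-graph algorithm): $\alpha \in \nS \cup \{0\}$ if and only if there exists $s \in \{-(C-1),\ldots, C-1\}$ with $M\alpha - se_1 \in \nS \cup \{0\}$. Hence $\nS \cup \{0\}$ is the largest subset of $\Z^3$ closed under the relation $\alpha \leadsto M\alpha - se_1$ and contained in some a priori bounded region (obtained by inscribing $T$ in an explicit box depending on $A,B,C$, read off from the set equation).

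With this in hand the argument has three stages. First, I would write down an explicit candidate set $\mathcal{N}$ of $14$ points of $\Z^3$ (appearing in seven pairs $\pm\alpha$, by analogy with the truncated-octahedron configuration of Corollary~\ref{cor:O}) and verify that it is closed under the neighbor relation; concretely, for each $\alpha \in \mathcal{N}$ I would exhibit a digit $s_\alpha \in \{-(C-1),\ldots,C-1\}$ with $M\alpha - s_\alpha e_1 \in \mathcal{N}\cup\{0\}$. The inequalities $C\geq 2(B-A)+2$ in regime~(1) and $C\geq A+B-2$ in regime~(2) are exactly what is needed for the required $s_\alpha$ to lie in the admissible range; this is where the split into two cases enters.

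Second, and this is the main obstacle, I must show that no element outside $\mathcal{N}$ is a neighbor. I would do this by a pruning procedure: start from the finite candidate set $\mathcal{N}^{(0)} \subset \Z^3$ of all $\alpha$ whose coordinates satisfy the a priori box bound, and iteratively discard any $\alpha$ for which every choice $s\in\{-(C-1),\ldots,C-1\}$ yields $M\alpha - se_1$ already discarded. Under hypothesis (1) or (2) the pruning should stabilize exactly at $\mathcal{N}$. This step is genuinely case-sensitive: in regime~(1) the bound $C\geq 2(B-A)+2$ forces certain ``diagonal'' candidates to be pruned, whereas in regime~(2), where $B<2A-1$ shifts which candidates can survive, it is $C\geq A+B-2$ that kills them. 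Because the last row of the companion matrix feeds back $-Ce_1-Be_2-Ae_3$, the coordinate estimates needed to rule out non-neighbors are tight and depend sharply on which numerical regime is in force; I expect roughly a dozen micro-cases in each regime, each resolved by checking a short explicit cycle.

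Finally, for the converse ``only if'' direction I would, for every violated inequality, exhibit an explicit $15$th neighbor. For example, if in regime~(1) one has $C = 2(B-A)+1$, then a specific element of $\Z^3$ admits a periodic admissible digit sequence within $\{-(C-1),\ldots,C-1\}$ and hence lies in $\nS\setminus \mathcal{N}$; analogous witnesses can be written down for the remaining failure cases. The heart of the proof is thus the upper-bound step two, where a clean termination of the pruning has to be extracted from the tight interaction between the bounds on $A,B,C$ and the action of $M$ in the companion basis.
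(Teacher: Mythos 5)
Your overall architecture (companion-matrix normal form, the ``no sinks'' characterization of $\nS\cup\{0\}$, an explicit $14$-element candidate set, exclusion of everything else, and explicit extra neighbors when the inequalities fail) runs parallel to the paper's, and your ``only if'' step is essentially what the paper does in Lemmas~\ref{Equal-Condi}, \ref{cycle-C1} and \ref{cycle-C2}: exhibit a cycle of $G(\Z^3)$ through points outside the candidate set. But your step two --- which you correctly identify as the heart of the matter --- has a genuine gap. You propose to prune the set of \emph{all} lattice points in an a priori box circumscribing $T-T$. That box is read off from $\sum_{k\ge1}M^{-k}\D$, and the number of lattice points it contains grows with $A,B,C$; there is no uniform parametrization of these candidates across the infinite family of triples, so ``roughly a dozen micro-cases in each regime'' drastically underestimates what the pruning has to process, and nothing in your sketch explains why the pruning terminates at exactly $14$ points uniformly in $(A,B,C)$. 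The paper's way around this is precisely the device you are missing: it first computes the contact set $\nR$ (Lemma~\ref{Red(G(R))}, $15$ points including $0$, given by closed-form expressions in $A,B,C$) and then invokes the Scheicher--Thuswaldner Algorithm~\ref{alg:ST}, by which $\nS$ is obtained by iterating $S_p={\rm Red}(S_{p-1}+\nR)$. This replaces the unbounded box by the Minkowski sum $\nR+\nR$, which has exactly $65$ points falling into $20$ uniformly parametrized families (Lemma~\ref{product-neigh}); showing ${\rm Red}(\nR+\nR)=\nR$ under condition (1) or (2) (Lemma~\ref{ReduceG^{(2)}}, via an acyclic ``successor'' graph on the families) makes the algorithm stabilize after one step and yields $|\nS|=14$. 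Without this reduction, or an equivalent uniform bound on where neighbors can live, your exclusion step is only an algorithm for each fixed $(A,B,C)$, not a proof of the theorem.

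A smaller inaccuracy: you place part of the role of the inequalities $C\ge 2(B-A)+2$ and $C\ge A+B-2$ in your stage one, claiming they are needed for the $14$ candidates to be closed under the neighbor relation. They are not: the $14$ contact-set elements are neighbors for every $1\le A<B<C$ (this is exactly why the theorem's negation gives \emph{more} than $14$ neighbors, never fewer). The two inequalities enter only in the exclusion step, where they force the extra elements of $\nR+\nR$ to be sinks after finitely many steps.
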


The paper is organized as follows. In Section~\ref{sec_graphandneighbor} we prove Theorem~\ref{count-neigh-General}. The main ingredient of this proof are certain graphs that contain information on the neighbors of $T$. These graphs also can be used to define so-called {\em graph-directed iterated function systems} in the sense of Mauldin and Williams~\cite{MauldinWilliams88} whose attractor is the collection  $\{\B_\alpha;\, \alpha \in \nS\}$. We will also establish graphs that describe the nonempty $\ell$-fold intersections $\B_{\Bold{\alpha}}$. All these results will be needed in Section~\ref{sec:topo}, the core part of the present paper, where we will combine them with Bing's results from~\cite{Bing51} and other topological results including dimension theory to establish Theorem~\ref{Main-1}. 
In Section~\ref{sec:perspective} we discuss perspectives for further research.

\section{Intersections and neighbors}\label{sec_graphandneighbor}

After providing basic definitions in Section~\ref{sec:normal}, we deal with graphs that describe the intersections of a self-affine tile with its neighbors. In Section~\ref{sec:21} we define these graphs and in Sections~\ref{sec:contact} and \ref{sec:neighbor} we compute them for the class of self-affine tiles relevant for us. This will lead to the proof of Theorem~\ref{count-neigh-General}. Finally, Section~\ref{sec:34} deals with $\ell$-fold intersections of tiles.

\subsection{Basic definitions}\label{sec:normal}
To establish our theory we need to impose some conditions on a self-affine tile. For this reason we recall the following definition that goes back to Bandt and Wang~\cite{BandtWang01}.

\begin{definition}\label{def:Zm}
Let $M$ be an expanding $m\times m$ integer matrix and assume that $\D \subset \Z^m$ is a complete set of coset representatives of $\Z^m/M\Z^m$ which satisfies $\Z[M,\D]=\Z^m$. If the self-affine tile $T=T(M,\D)$ defined by the set equation $MT=T+\D$ tiles $\R^m$ w.r.t.\ the lattice $\Z^m$ in the sense that ($\mu_m$ is the Lebesgue measure on $\R^m$)
\begin{equation}\label{eq:tilingproperty}
\mu_m((T+\alpha_1)\cap(T+\alpha_2))=0
\qquad(\alpha_1, \alpha_2\in \Z^m \text{ distinct})
\end{equation}  
we call $T$ a {\em $\Z^m$-tile}. 
\end{definition}

We will deal with general $\Z^m$-tiles only in Section~\ref{sec:21} (and in parts of Section~\ref{sec:34}). After that we will restrict ourselves to the class of self-affine tiles to which our main results are dedicated (which, as we will see, are $\Z^3$-tiles). We first show that the matrices $M$ occurring there are indeed expanding. This result is a special case of results from Kirat~{\it et al.}~\cite{KLR:04} but for the sake of completeness we include its short proof.

\begin{lemma}[{{\em cf.}~\cite[Proposition~2.6~(iii)]{KLR:04}}]\label{lem:expand}
Let $M$ be a $3\times 3$ integer matrix whose characteristic polynomial $\chi(x)=x^3+Ax^2+Bx+C$ satisfies $1\le A\le B < C$. Then $M$ is expanding.
\end{lemma}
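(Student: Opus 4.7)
The plan is to show that no root $\lambda$ of $\chi(x) = x^3 + Ax^2 + Bx + C$ satisfies $|\lambda| \le 1$, by sandwiching the real root(s) and then using Vieta's formulas for the remaining (possibly complex) pair.

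First, I would establish that $\chi(x) > 0$ for every real $x \ge -1$, so that every real eigenvalue of $M$ is strictly less than $-1$. At the endpoints, $\chi(0) = C > 0$ and
\[
\chi(-1) = -1 + A - B + C = (A-1) + (C-B) \ge 0 + 1 = 1,
\]
using that $A \ge 1$ and $C > B$ are integers. For $x > 0$ the inequality $\chi(x) > 0$ is immediate since all four terms are nonnegative and $C > 0$. For $x \in (-1, 0)$ I would set $x = -t$ with $t \in (0,1)$ and bound termwise, $-t^3 > -1$, $At^2 > 0$, and $-Bt > -B$, yielding
\[
\chi(-t) = -t^3 + At^2 - Bt + C > -1 + 0 - B + C = C - B - 1 \ge 0.
\]

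Second, I would evaluate $\chi$ at $x = -C$:
\[
\chi(-C) = -C^3 + AC^2 - BC + C = -C\bigl(C(C-A) + (B-1)\bigr).
\]
Since $C > B \ge A$ and all are integers, $C - A \ge 1$, hence $C(C-A) + (B-1) \ge C + (B-1) \ge 2$, so $\chi(-C) < 0$. Combined with $\chi(-1) > 0$, the intermediate value theorem produces a real root $\lambda_1$ in $(-C, -1)$; in particular $1 < |\lambda_1| < C$.

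Third, I would treat the remaining two roots. If they are real, the first step already forces each of them to have modulus greater than $1$. Otherwise they form a complex conjugate pair $\lambda_2, \overline{\lambda_2}$, and Vieta's relation $\lambda_1 \lambda_2 \overline{\lambda_2} = -C$ together with $\lambda_1 < 0$ gives
\[
|\lambda_2|^2 = -\frac{C}{\lambda_1} = \frac{C}{|\lambda_1|} > 1,
\]
by the second step. Thus every eigenvalue of $M$ has modulus $> 1$, which is precisely the expanding property.

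The only delicate point is the first step: the integrality of $A,B,C$ and the hypothesis $1 \le A \le B < C$ are both essential for the key estimate $C - B - 1 \ge 0$ (and for $\chi(-1) \ge 1$). Everything else is a short calculation plus Vieta, so I anticipate no further obstacle.
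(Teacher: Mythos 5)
Your proof is correct and follows essentially the same route as the paper: both locate a real root in $(-C,-1)$ via sign changes at $-C$ and $-1$, show $\chi$ has no root in $[-1,\infty)$ (the paper bounds $\chi(x)\ge x^3+x^2+(C-1)x+C$ on $[-1,0)$ where you bound termwise to get $\chi(-t)>C-B-1\ge 0$), and dispose of a possible complex conjugate pair with Vieta's product relation. The differences are purely cosmetic.
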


\begin{proof}
We have to show that the roots $x_1,x_2,x_3$ of $\chi$ satisfy $|x_i|>1$ for $i\in\{1,2,3\}$. Since $\chi(-C) < 0$ and $\chi(-1) >0$ there is a root of $\chi$, say $x_1$, with $x_1\in(-C,-1)$ and \emph{a fortiori} $|x_1| > 1$.  Moreover, for $x \in [0,1]$ we obviously have $\chi(x)>0$ and for $x\in [-1,0)$ we get the same from $\chi(x) \ge x^3 +  x^2 + (C-1) x + C > 0$. Thus $\chi$ has no root in $[-1,1]$ and, hence, if $x_2,x_3\in\mathbb{R}$ then $|x_2|,|x_3|>1$ and we are done. If $x_2,x_3\not\in\mathbb{R}$ then $|x_2|=|x_3|$. Because $|x_1|<C$ and $|x_1x_2x_3|=C$ we gain $|x_2|=|x_3|>1$ and the lemma is proved.
\end{proof}

For the tiles of our main results we now define a simple normal form. Let $M'$ be a $3\times 3$ integer matrix with characteristic polynomial $x^3+Ax^2+Bx+C$ satisfying $1\le A \le B < C$. By Lemma~\ref{lem:expand} the matrix $M'$ is expanding. Moreover, let $\D'\subset \Z^3$ be a collinear digit set as in \eqref{eq:collinearD} for some $v\in \Z^3\setminus\{0\}$. Assume that $T'=T'(M',\D')$  is a self-affine tile with collinear digit set. This entails that $\{v,M'v,M'^2v\}$ is a basis of $\R^3$ because $T'$ has positive Lebesgue measure (and, hence, nonempty interior by ~\cite[Theorem~1.1]{LagariasWang96a}). Denote by $E$  the matrix of the change of basis from the standard basis of $\R^3$ to $\{v,M'v,M'^2v\}$ and set
\begin{equation}\label{digit}
M=E^{-1}M'E=\begin{pmatrix}
0&0&-C\\
1&0&-B\\
0&1&-A\\
\end{pmatrix} \quad\hbox{and}\quad \D=E^{-1}\D'=\left\{\begin{pmatrix}
0\\
0\\
0\\
\end{pmatrix}, \begin{pmatrix}
1\\
0\\
0\\
\end{pmatrix},\dots, \begin{pmatrix}
C-1\\
0\\
0\\
\end{pmatrix}\right\}.
\end{equation}
As $M'$ is expanding, the same is true for $M$. 
Define $T$ by $MT = T + \D$. Then we have $T=E^{-1} T'$ and, because $E$ is invertible, this implies that $T$ is a self-affine tile. The linear mapping induced by $E^{-1}$ maps $\Z[M',\D']$ to $\Z^3$. Moreover, $\partial T= E^{-1} \partial T'$ and for $\{\alpha_1,\ldots,\alpha_\ell\}\subset \Z[M',\D']$ we have 
\[
E^{-1} (T' \cap (T'+\alpha_1) \cap \dots \cap (T'+\alpha_\ell) ) = T \cap (T + E^{-1} \alpha_1) \cap \dots \cap (T +E^{-1} \alpha_\ell).
\] 
Thus it is sufficient to prove Theorem~\ref{Main-1} and Theorem~\ref{count-neigh-General} for self-affine tiles of the form $T=T(M,\D)$ and in all what follows we may restrict our attention to the following class of self-affine-tiles. 

\begin{definition}\label{def:ABC}
A self-affine tile $T$ given by $MT=T + \D$ with $M$ and $\D$ as in \eqref{digit}, where $A,B,C \in \Z$ satisfy $1\leq A\leq B<C$, is called {\em $ABC$-tile}.
\end{definition}

The tiles in Figure~\ref{fig:3d} are approximations of $ABC$-tiles for the choice $(A,B,C)=(1,1,2)$ (left hand side) and $(A,B,C)=(1,2,4)$ (right hand side). 

We need the following lemma which is an easy consequence of Barat {\it et al.}~\cite[Theorem~3.3]{BaratBertheLiardetThuswaldner06}.

\begin{lemma}\label{ABCZ3}
Each $ABC$-tile is a $\Z^3$-tile.\footnote{Another way to prove this would be via the general result~\cite[Theorem~6.2]{LagariasWang97}. This would also require several new notations. So we decided to do it this way.}
\end{lemma}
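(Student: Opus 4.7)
The plan is to reduce the statement to \cite[Theorem~3.3]{BaratBertheLiardetThuswaldner06}, which yields the $\Z^m$-tiling property \eqref{eq:tilingproperty} for any self-affine attractor $T(M,\D)$ arising from a canonical number system (CNS). The work thus splits into checking the formal requirements of Definition~\ref{def:Zm} and the finiteness hypothesis of that theorem.

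First I would verify the hypotheses of Definition~\ref{def:Zm}. Expanding-ness of $M$ is Lemma~\ref{lem:expand}. The identity $\Z[M,\D]=\Z^3$ is immediate from the shape of \eqref{digit}: since $e_1\in\D$ and the companion form gives $Me_1=e_2$, $Me_2=e_3$, the standard basis already lies in $\Z[M,\D]$. For the complete-residue-system condition I would solve $(j-k)e_1=Mz$ with $z\in\Z^3$ and $0\le j,k<C$; reading off the coordinates of $Mz$ via \eqref{digit} forces $z_3=-(j-k)/C$, and integrality combined with $|j-k|<C$ forces $j=k$, so the $C$ vectors of $\D$ represent pairwise distinct cosets of $\Z^3/M\Z^3$.

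Second, I would invoke the classical CNS criterion of Akiyama and Peth\H{o}: whenever $\chi(x)=x^3+Ax^2+Bx+C$ satisfies $C\ge B\ge A\ge 0$ and $C\ge 2$, the pair $(M,\D)$ forms a canonical number system, i.e., every $z\in\Z^3$ has a finite expansion $z=\sum_{i\ge 0}M^i d_i$ with $d_i\in\D$. Our standing hypothesis $1\le A\le B<C$ trivially implies these inequalities, so the criterion applies. Feeding this finiteness property together with the checks above into \cite[Theorem~3.3]{BaratBertheLiardetThuswaldner06} yields \eqref{eq:tilingproperty}, which is exactly the $\Z^3$-tile property. The only genuinely nontrivial input is the CNS property; the alternative route via \cite[Theorem~6.2]{LagariasWang97} mentioned in the footnote rests on essentially the same finiteness information but requires more auxiliary notation, which is why the CNS-based argument is preferable here.
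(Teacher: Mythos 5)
Your proposal is correct and follows essentially the same route as the paper: verify the formal conditions of Definition~\ref{def:Zm} directly, use the fact that $1\le A\le B<C$ makes $x^3+Ax^2+Bx+C$ a CNS polynomial, and then conclude the tiling property from a known criterion. The only discrepancy is in the bookkeeping of references: in the paper, \cite[Theorem~3.3]{BaratBertheLiardetThuswaldner06} \emph{is} the CNS criterion (the role you assign to Akiyama--Peth\H{o}), and the passage from the CNS property to the tiling property is not that theorem but rather the observation that finite representability of every $z\in\Z^3$ forces $\Delta(M,\D)=\Z^3$, after which \cite[Theorem~1.2~(ii)]{LagariasWang96a} gives \eqref{eq:tilingproperty} --- so you should cite a Lagarias--Wang-type tiling criterion, not the CNS theorem, for the final step.
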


\begin{proof}
Each $ABC$-tile $T$ is defined as $T=T(M,\D)$ with $M$ and $\D$ as in \eqref{digit} with $1\leq A\leq B<C$. It is straightforward to check that $\D$ is a complete set of coset representatives of $\Z^3/M\Z^3$ and that $\Z[M,\D]=\Z^3$. Thus it remains to show that $\{T+\alpha;\, \alpha\in\Z^3\}$ tiles $\R^3$. Let 
$$
\Delta(M,\D) = \bigcup_{\ell \ge 0} ((\D-\D) +  M(\D-\D) + \cdots + M^\ell(\D-\D)).
$$
We claim that $\Delta(M,\D) = \Z^3$.
Obviously, $\Delta(M,\D) \subset \Z^3$. We have to prove the reverse inclusion. Since $1\leq A\leq B<C$, Barat {\em et al.}~\cite[Theorem~3.3]{BaratBertheLiardetThuswaldner06} implies that $x^3+Ax^2+Bx+C$ is the basis of a so-called {\it canonical number system}. In view of Barat {\em et al.}~\cite[Definition~3.2 and the paragraph above it]{BaratBertheLiardetThuswaldner06} this is equivalent to the fact that $(M,\D)$ is a {\em matrix numeration system}. However, by definition this means that each $z\in \Z^3$ can be represented in the form $z=d_0+M d_1+\cdots+M^\ell d_\ell$ with some $\ell \ge 0$ and $d_0,\ldots, d_\ell \in \D$. Thus $\Z^3 \subset \Delta(M,\D)$ and the claim is proved.

The result now follows from \cite[Theorem~1.2~(ii)]{LagariasWang96a}.  
\end{proof}

In view of the transformation in \eqref{digit} this lemma proves the tiling assertion in Theorem~\ref{Main-1}.

\subsection{Graphs related to the boundary of a tile}\label{sec:21}
If $M$ and $\D$ are given in a way that $T=T(M,\D)$ is a $\Z^m$-tile we obviously have
\begin{equation}\label{boun_1}
\partial T=\bigcup_{\alpha\in\mathcal{S}}\B_{\alpha}.
\end{equation} 
Here $\mathcal{S}$ and $\B_{\alpha}$, $\alpha\in\nS$, are defined as in \eqref{eq:neigh} and \eqref{eq:bgamma}, respectively; note that $\Z[M,\D]=\Z^m$ in these definitions because $T$ is a $\Z^m$-tile. By the definition of $\B_\alpha$ and the set equation \eqref{eq:SetEquationTile} we get 
\begin{equation}
\begin{split}
\label{boun_2}
\B_{\alpha} 
=T\cap (T+\alpha)
 &
=M^{-1}(T+\D)\cap M^{-1}(T+\D+M\alpha)
\\  &
=M^{-1}\bigcup_{d,d'\in \D}(\B_{M\alpha+d'-d}+d).      
\end{split}
\end{equation}
This subdivision of $\B_{\alpha}$ has been noted for instance by Strichartz and Wang~\cite{StrichartzWang99} and Wang~\cite{Wang99}.

For a directed labeled graph $G$ with set of vertices $V$, set of edges $E$, and set of edge-labels $L$ we denote an edge leading from  $v \in V$ to $v' \in V$ labeled by $\ell \in L$ by $v\xrightarrow{\ell} v'$. In this case $v$ is called a \emph{predecessor} of $v'$ and $v'$ is called a \emph{successor} of $v$. Following {\it e.g.}~Diestel~\cite[Chapter~1]{Diestel:05} we distinguish between walks and paths of $G$. A (finite or infinite) sequence $v_0\xrightarrow{\ell_1}v_1\xrightarrow{\ell_2}v_2\xrightarrow{\ell_3}\cdots$ of consecutive edges in $G$ is called a \emph{walk}. A walk whose vertices $v_0,v_1,v_2,\ldots$ are pairwise distinct is called a \emph{path}. Each path of $G$ can be regarded as a subgraph of $G$. 
If $G$ is undirected and not labeled, then an edge is denoted by $v\,\mbox{---}\,v'$. Walks and paths in $G$ are defined analogously as sequences of consecutive vertices with or without possible repetitions, respectively, as above.

\begin{definition}[{{\em cf.}~\cite[Definition~3.2]{ScheicherThuswaldner03}}]\label{Graph}
Let $M$ be an expanding integer matrix and let $\D$ be a complete set of coset representatives of $\Z^m/M\Z^m$. Define a directed labeled graph $G(\Z^m)$ as follows. The vertices of $G(\Z^m)$ are the elements of $\Z^m$, and there is a labeled edge 
\begin{align}\label{eq:arrow}
\alpha\xrightarrow{d|d'}\alpha'\quad\text{  if and only if  }  M\alpha+d'-d=\alpha'  \text{ with }\alpha,\alpha' \in \Z^m \text{ and } d, d' \in \mathcal{D}.
\end{align}
\end{definition}

For $\Gamma\subset\Z^m$ we denote by $G(\Gamma)$ the subgraph of $G(\Z^m)$ induced by $\Gamma$.
In \eqref{eq:arrow} the vector $d'$ is determined by $\alpha,\alpha',d$. Thus we sometimes just write $\alpha\xrightarrow{d}\alpha'$ instead of $\alpha\xrightarrow{d|d'}\alpha'$.  We will write $\alpha \in G(\Gamma)$ to indicate that $\alpha$ is a vertex of $G(\Gamma)$ and $\alpha\xrightarrow{d}\alpha' \in G(\Gamma)$ to indicate that 
$\alpha\xrightarrow{d}\alpha'$ is an edge of $G(\Gamma)$. For walks we will use an analogous notation.
The following symmetry property follows from Definition~\ref{Graph}. If $\alpha,\alpha',-\alpha,-\alpha'\in \Gamma$ then
\begin{equation}\label{inverse}
\alpha\xrightarrow{d|d'}\alpha'\in G(\Gamma) \quad\Longleftrightarrow\quad -\alpha\xrightarrow{d'|d}-\alpha'\in G(\Gamma).
\end{equation}
By ${\rm Red}(\Gamma)$ we denote the largest subset of $\Gamma$ for which $G(\Gamma)$ has no sink (a sink is a vertex that has no successor). Thus $G({\rm Red}(\Gamma))$ emerges from $G(\Gamma)$ by successively removing all sinks.

The graph $G(\nS)$, where $\nS$ is the set of neighbors of $T$ defined in \eqref{eq:neigh}, is called \emph{neighbor graph}. From \eqref{boun_2} we see that $\{\B_\alpha;\, \alpha\in\nS\}$ is the attractor of a graph-directed iterated function system (in the sense of Mauldin and Williams~\cite{MauldinWilliams88}) directed by the graph $G(\nS)$, that is,
the nonempty compact sets $\B_\alpha$, $\alpha\in\nS$, are uniquely determined by the set equations
\begin{equation}\label{eq:bgammaseteq}
\B_\alpha=\bigcup_{
\begin{subarray}{c}d\in\D, \alpha'\in\nS \\ \alpha\xrightarrow{d} \alpha' \in G(\nS)\end{subarray}}M^{-1}(\B_{\alpha'}+d) \qquad(\alpha\in\nS).
\end{equation}
The union in \eqref{eq:bgammaseteq} is extended over all $d,\alpha'$ with $\alpha\xrightarrow{d}\alpha'\in G(\nS)$. Thus by \eqref{boun_1} the boundary $\partial T$ is determined by the graph $G(\nS)$. This fact was used implicitly in Wang~\cite{Wang99}  in order to establish a formula for the Hausdorff dimension of the boundary of a $\Z^m$-tile~$T$.

Fix a basis $\{e_1,e_2,\dots,e_m\}$ of the lattice $\Z^m$, set $R_0=\{0, \pm e_1,\dots,\pm e_m\}$, and define the nested sequence $(R_n)_{n\ge 0}$ of subsets of $\Z^m$ inductively by 
\begin{equation}\label{eq:cnb}
R_n:=\{k\in \Z^m;\; (Mk+\D)\cap(\ell+\D)\neq\emptyset\text{ for } \ell \in R_{n-1}\}\cup R_{n-1}.
\end{equation}
We know from Gr\"ochenig and Haas~\cite[Section~4]{GroechenigHaas94} (see also Duvall~{\em et al.}~\cite{DuvallKeeslingVince00}) that $R_n$ stabilizes after finitely many steps, that is, $R_{n-1}=R_n$ holds for $n$ large enough. Therefore, $\nR={\rm Red}\left(\bigcup_{n\geq 0}R_n\right)$ is a finite set. We call $\nR$ the {\it contact set} of the $\Z^m$-tile $T$ and $G(\nR)$ its \emph{contact graph}.
Also the set $\nR$ can be used to define $\partial T$. Indeed, we have
\begin{equation*}
\partial T=\bigcup_{\alpha\in \nR} \B_\alpha
\end{equation*}
(see~{\em e.g.}~\cite{ScheicherThuswaldner03}). In~\cite[Section~4]{GroechenigHaas94} as well as in \cite{ScheicherThuswaldner03} it is explained why $\nR$ is called ``contact set''. The elements of $\nR$ turn out to be neighbors in a tiling of certain approximations $T_n$ of the self-affine tile $T$, which also form tilings w.r.t.~the lattice $\Z^m$ for each $n\ge 0$. However, we will not need this interpretation in the sequel. 

In the graph $G(\nS)$ there cannot occur any {\em sink}. Indeed, if $\alpha\in\nS$ would be a sink, for this $\alpha$ the right hand side of the set equation \eqref{eq:bgammaseteq} would be empty. However, this yields $\B_\alpha=T\cap(T+\alpha)=\emptyset$, a contradiction to $\alpha\in\nS$. The contact graph $G(\nR)$ contains no sink by definition. Scheicher and Thuswaldner \cite{ScheicherThuswaldner03} proved that $\nS$ can be determined by the following algorithm.

\begin{algo}[{{\em cf.}~\cite[Algorithm~3.6]{ScheicherThuswaldner03}}]\label{alg:ST}
For $p\ge 0$ define the nested sequence $(S_p)_{p\ge 0}$ of subsets of $\Z^m$ recursively by $S_0:=\nR$ and $S_p:={\rm Red}(S_{p-1} + S_0)$ for $p\ge 1$, where the occurring sum is the Minkowski sum. Then there exists a smallest $q\ge 0$ such that $S_q=S_{q+1}$. For this $q$ we have $\nS=S_q\setminus\{0\}$.
\end{algo}

\subsection{The contact set}\label{sec:contact}
Let $T$ be an $ABC$-tile. Since $T$ is a $\Z^3$-tile by Lemma~\ref{ABCZ3} we may apply the theory of Section~\ref{sec:21}  to calculate its contact set $\nR$.  In order to define $R_0$ we use the basis $\{(1,0,0)^t, (A,1,0)^t, (B,A,1)^t\}$ of the lattice $\Z^3$.

\begin{lemma}\label{Red(G(R))}
Let $T$ be an $ABC$-tile and let $R_0=\{(0,0,0)^t,\pm(1,0,0)^t, \pm(A,1,0)^t, \pm(B,A,1)^t\}$. Then $R_3=R_2$, \emph{i.e.}, the contact set $\nR$ is equal to $R_2$. In particular,\begin{itemize}
\item[(1)] for 
$1\le A<B$
the set $\nR$ has the $15$ elements
\begin{equation}\label{Rp1}
\begin{split}
\{(0,0,0)^t,\pm(1,0,0)^t,  \pm(A-1,&1,0)^t,  \pm(A,1,0)^t,  \pm(B-A, A-1, 1)^t,
\\
 & \pm(B-A+1,A-1,1)^t,\pm(B-1,A,1)^t, \pm(B,A,1)^t\};
\end{split}
\end{equation}
\item[(2)] for 
$1\le A=B$
the set $\nR$ has the $13$ elements 
\begin{equation}\label{Rp2}
\{(0,0,0)^t,\pm(1,0,0)^t,\pm(A-1,1,0)^t,\pm(A,1,0)^t,\pm(0, A-1, 1)^t,
\pm(A-1,A,1)^t, \pm(A,A,1)^t\}.
\end{equation}
\end{itemize}
\end{lemma}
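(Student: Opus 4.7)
The plan is to unwind the recursion \eqref{eq:cnb} two steps from $R_0$ and then verify that it has stabilized. The defining condition $(Mk+\D)\cap(\ell+\D)\neq\emptyset$ is equivalent to the existence of an integer $j$ with $|j|\le C-1$ such that $Mk-\ell=(j,0,0)^t$. Writing $k=(k_1,k_2,k_3)^t$ and using the companion form of $M$ from \eqref{digit}, this translates into the scalar system
\begin{equation*}
-Ck_3-\ell_1=j,\qquad k_1-Bk_3=\ell_2,\qquad k_2-Ak_3=\ell_3,\qquad |j|\le C-1.
\end{equation*}
Thus the admissible values of $k_3$ form an interval of length $2(C-1)/C<2$, which contains at most two integers; once $k_3$ is fixed, $k_1$ and $k_2$ are uniquely determined by $\ell$. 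This reduces each step of the recursion to a finite case analysis in two admissible $k_3$ values.

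I would next perform the two iterations explicitly, exploiting the symmetry \eqref{inverse} to halve the work. A direct computation yields
\[
R_1=R_0\cup\{\pm(B-1,A,1)^t,\pm(B-A,A-1,1)^t\},
\]
so that for the second iteration only the predecessors of the newly added vectors need examination. For $\pm(B-1,A,1)^t$ both admissible $k_3$ values return elements already in $R_1$. For $\pm(B-A,A-1,1)^t$ the case $k_3=0$ always yields $\pm(A-1,1,0)^t$, while the case $k_3=-1$ produces $\mp(B-A+1,A-1,1)^t$ provided $|j|=C-B+A\le C-1$, i.e., precisely when $A<B$. Consequently, in case (1) both new pairs appear and $R_2$ is the $15$-element set \eqref{Rp1}, while in case (2) the $k_3=-1$ branch is blocked (it would force $|j|=C$) and only $\pm(A-1,1,0)^t$ is added, giving the $13$-element set \eqref{Rp2}.

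For stability, I would compute the predecessors of the vectors added at the second step and check that each of them lies in $R_2$, which establishes $R_3=R_2$. Finally, to conclude $\nR=R_2$ it suffices to show that ${\rm Red}$ removes nothing, i.e., that no element of $R_2$ is a sink in $G(R_2)$. By the symmetry \eqref{inverse}, $\alpha$ has a successor in $G(R_2)$ if and only if $-\alpha$ has a predecessor in $G(R_2)$, and the calculations above exhibit an explicit predecessor inside $R_2$ for every element of $R_2$ (with $0$ having itself as a predecessor through any $d\in\D$). Hence $G(R_2)$ contains no sinks.

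The main obstacle is the case-splitting bookkeeping: one must track carefully which of the values $k_3\in\{-1,0,1\}$ is admissible at each step, in particular at the boundary situations $A=B$ and $B=1$ (the latter forcing case (2) with $A=1$). Once these cases are isolated, the proof reduces to a finite, routine verification.
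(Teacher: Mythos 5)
Your overall strategy coincides in substance with the paper's: both proofs reduce to the same finite list of predecessor computations in $G(\Z^3)$ (the paper records these in Table~\ref{table-contact} and verifies that the claimed $15$- resp.\ $13$-element set is contained in $R_2$, is closed under taking predecessors, and equals its own reduction, whereas you iterate \eqref{eq:cnb} forward from $R_0$). Your reduction of the membership test to the scalar system in $(j,k_3)$ is correct, and the resulting sets $R_1$ and $R_2$, the admissibility analysis for $k_3$ (including the blocking of the $k_3=-1$ branch exactly when $A=B$), and the stabilization $R_3=R_2$ all check out against Table~\ref{table-contact}.

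There is, however, one incorrect step in your final no-sink verification. The symmetry \eqref{inverse} sends the edge $\alpha\xrightarrow{d|d'}\alpha'$ to $-\alpha\xrightarrow{d'|d}-\alpha'$; it therefore shows that $\alpha$ has a successor if and only if $-\alpha$ has a successor, \emph{not} that $\alpha$ has a successor if and only if $-\alpha$ has a predecessor. In general a negation-symmetric digraph in which every vertex has in-degree at least one may still contain sinks (take vertices $\{\pm a,\pm b\}$ with edges $a\to a$, $a\to b$ and their negatives: every vertex has a predecessor, yet $b$ is a sink). So ``every element of $R_2$ has a predecessor in $R_2$'' does not by itself rule out sinks. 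The repair is immediate from data you have already produced: since you determined, for \emph{every} $\ell\in R_2$, the complete set of predecessors of $\ell$ in $G(\Z^3)$ and found them all to lie in $R_2$, you possess the full edge list of $G(R_2)$; what must be checked is that every element of $R_2$ \emph{occurs as a predecessor} of some element of $R_2$, i.e.\ has out-degree at least one. This is indeed visible in your computations, e.g.\ via $0\to P$, $P\to Q$, $Q\to N$, $N\to -P$, $(B-1,A,1)^t\to -Q$, $(A-1,1,0)^t\to(B-A,A-1,1)^t$, $(B-A,A-1,1)^t\to -N$, and (when $A<B$) $(B-A+1,A-1,1)^t\to-(B-A,A-1,1)^t$, together with their negatives. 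With this correction the argument is complete.
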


\begin{proof}
If $1\le A<B$ then let $\nR'$ be the set in \eqref{Rp1}, and if $1\le A=B$ then let $\nR'$ be the set in \eqref{Rp2}.
Each edge $\alpha\xrightarrow{d|d'} \alpha'$ of Table~\ref{table-contact}  as well as its  ``negative version'' $-\alpha\xrightarrow{d'|d}-\alpha'$ exists in $G(\Z^3)$ by the definition of this graph. Thus, by inspection of Table~\ref{table-contact} we see that for each $\alpha \in \nR'$ there is a path of length at most two in $G(\Z^3)$ that starts in $\alpha$ and ends in an element of $R_0$. This implies by \eqref{eq:cnb} that $\nR' \subset R_2$. Also from Table~\ref{table-contact} we see that $\nR'={\rm Red}(\nR')$. Moreover, by direct calculation we gain that each predecessor (in $G(\Z^3)$) of each element of $\nR'$ is contained in $\nR'$. Because of \eqref{eq:cnb} and the fact that $R_0\subset \nR'$, this implies that $\nR \subset \nR'$. Summing up we get ${\rm Red}(\nR')=\nR' \subset R_2\subset R_3 \subset \nR \subset \nR'$ which proves the result.
\end{proof}

Figure~\ref{Gamma2_1} shows the graph $G(\nR \setminus\{0\})$ under the condition $1\le A<B<C$.
The fact that $0\in\nR$ is a natural consequence of the way this set is defined. However, it will often be more convenient for us to work with $\nR\setminus\{0\}$ instead of $\nR$ (like for instance in Figure~\ref{Gamma2_1}).
 
\begin{table}[h]
{\scr 
{\centering
\begin{tabular}{|l|c|c|l|}
\hline 
Edge & Labels&Exists under \\  & &condition 
\\
\hline 
\vspace{-1mm}&&\\ 
$000\rightarrow 000$ &$\{0|0, 1|1,\dots,(C-1)|(C-1)\}$ & --\\
 \hline 
 \vspace{-1mm}&&\\ 
$000\rightarrow 100$ &$\{0|1, 1|2,\dots,(C-2)|(C-1)\}$ & --\\
 \hline 
 \vspace{-1mm}&&\\ 
$100\rightarrow A10$ &$\{0|A, 1|(A+1),\dots,(C-A-1)|(C-1)\}$ & --\\
 \hline 
 \vspace{-1mm}&&\\
$100\rightarrow (A-1)10$ & $\{0|(A-1),1|A,\dots, (C-A)|(C-1)\}$& --\\
\hline 
\vspace{-1mm}&&\\
 $BA1\rightarrow \overline{1}00$&$\{0|C-1\}$&--\\
\hline
\vspace{-1mm}&&\\
 $A10\rightarrow BA1$ &$\{0|B,1|(B+1),\dots,(C-B-1)|(C-1)\}$&  --\\
 \hline 
\vspace{-1mm}&&\\
$A10\rightarrow (B-1)A1$ & $\{0|(B-1),1|B,\dots,(C-B)|(C-1)\}$&-- \\
 \hline 
 \vspace{-1mm}&&\\
$(B-1)A1\rightarrow \overline{A}~\overline{1}0$ & $\{0|(C-A),1|(C-A+1),\dots,(A-1)|(C-1)\}$&-- \\
 \hline 
 \vspace{-1mm}&&\\
$(B-1)A1\rightarrow \overline{A-1}~\overline{1}0$ & $\{0|(C-A+1), 1|(C-A+2),\dots, (A-2)|(C-1)\}$& $A\geq 2$\\
 \hline 
 \vspace{-1mm}&&\\
$(B-A)(A-1)1\rightarrow \overline{B}~\overline{A}~\overline{1}$ & $\{0|(C-B),1|(C-B+1),\dots,(B-1)|(C-1)\}$&-- \\
 \hline 
 \vspace{-1mm}&&\\
$(B-A)(A-1)1\rightarrow \overline{B-1}~\overline{A}~\overline{1}$ & $\{0|(C-B+1),1|(C-B+2),\dots,(B-2)|(C-1)\}$& $B\geq 2$ \\
  \hline 
  \vspace{-1mm}&&\\
$(A-1)10\rightarrow (B-A){(A-1)}1$ & $\{0|(B-A),1|(B-A+1),\dots,(C-B+A-1)|(C-1)\}$&  \\
\hline 
\vspace{-1mm}&&\\
$\begin{matrix}
 (B-A+1)(A-1)1\\ 
 \downarrow \\
\overline{B-A}~\overline{A-1}~\overline{1}\\
 \end{matrix}
$ & $\{0|(C-B+A),1|(C-B+A+1),\dots,(B-A-1)|(C-1)\}$& 
$A\neq B$  \\ 
  \hline
  \vspace{-1mm}&&\\ 
  $\begin{matrix}
 (A-1)10\\ 
 \downarrow \\
(B-A+1){(A-1)}1\\
 \end{matrix}
$ & $\{0|(B-A+1),1|(B-A+2),\dots,(C-B+A-2)|(C-1)\}$
 & $\begin{matrix}
A\neq B
\\
\end{matrix}$ \\
\hline 
\vspace{-1mm}&&\\
$\begin{matrix}
 (B-A+1)(A-1)1\\ 
 \downarrow \\
\overline{B-A+1}~\overline{A-1}~\overline{1}\\
 \end{matrix}
$ & $\{0|(C-B+A-1),1|(C-B+A),\dots,(B-A
)|(C-1)\}$&
 $A\neq B$
 \\
\hline
  \end{tabular}

  \medskip

    \caption{The contact graph $G(\nR)$. The triple $abc$ stands for the node $(a,b,c)^t$ and $\overline{a}=-a$. The last column of the table contains the condition under which the respective edge exists. For each edge $\alpha\xrightarrow{d|d'} \alpha'$ in the table there exists the additional edge $-\alpha\xrightarrow{d'|d}-\alpha'\in G(\nR)$.  \label{table-contact} }
 }}
\end{table}

\begin{figure}[htbp]
\hskip 0.8cm
\xymatrix@C=2.2pc{*[o][F-]{\st{\textoverline{P}} }\ar[rrrr]^[*0.9]{\txt{\footnotesize $A,A+1,...,C-1$}} \ar[ddrr]^(.45)*[@]!LD!/^-4pt/[*0.9]{\txt{\footnotesize $A-1,A,...,C-1$}} & & & & *[o][F-]{\st{\textoverline{Q}} }\ar[rrrr]^[*0.9]{\txt{\footnotesize $B,B+1,...,C-1$}}\ar@/^6ex/[ddddrrrr]^(.35)*[@!-25]!LD!/^-5pt/[*0.9]{\txt{\footnotesize $B-1,B,...,C-1$}} & & & & *[o][F-]{\st{\textoverline{N}} }\ar@/^8ex/[dddddddd]^*[@!-90]!LD!/^2pt/[*0.9]{\txt{\footnotesize $C-1$}}\\
& & & & & & &\\
& & *[o][F-]{\st{\textoverline{Q-P}} }\ar[rr]^(0.47)[*0.8]{\txt{\footnotesize $B-A+1,..,C-1$}}\ar[dddd]^(0.5)[*0.9]
{\rotatebox{-90}{\footnotesize $B-A, ..., C-1$}}& & *[o][F-]{\st{\textoverline{N-Q+P}} }\ar[rr]^(0.53)[*0.8]{\txt{\footnotesize $C-B+A,...,C-1$}} \ar@<1ex>[dddd]^(0.5)[*0.9]{
\rotatebox{-90}{\footnotesize $C-B+A-1,...,C-1$}} & &*[o][F-]{\st{N-Q} }\ar[uurr]_(0.8)*[@]!LD!/^19pt/[*0.9]{\txt{\footnotesize $0,1,...,B-1$}}\ar[ddrr]^(0.5)*[@]!LD!/^7pt/[*0.9]{\txt{\footnotesize $0,1,...,B-2$}} & & &\\
& & & & & & & & &\\
 *[o][F-]{\st{N-P} }\ar@/^6ex/[uuuurrrr]^(0.7)*[@!+23]!RD!/^-7pt/[*0.9]{\txt{\footnotesize $0,1,...,A-1$}}\ar[uurr]_(0.5)*[@]!RD!/^63pt/[*0.9]{\txt{\footnotesize $0,1,...,A-2$}}& & & & & & & & *[o][F-]{\st{\textoverline{N-P}} }\ar[ddll]^(0.4)*[@!37]!LD!/^-10pt/[*0.9]{\txt{\footnotesize $C-A+1,...,C-1$}} \ar@/^6ex/[ddddllll]_(0.6)*[@!23]!LD!/^28pt/[*0.9]{\txt{\footnotesize $C-A,...,C-1$}}& &\\
& & & & & & & & &\\
& &*[o][F-]{\st{\textoverline{N-Q}} }\ar[uull]_(0.4)*[@!-37]!LD!/^-7pt/[*0.9]\txt{{\footnotesize $C-B+1,..., C-1$}}\ar[ddll]^(0.5)*[@!36]!LD!/^-10pt/[*0.9]\txt{\footnotesize $C-B,...,C-1$} & & *[o][F-]{\st{N-Q+P} }\ar@<1ex>[uuuu]^(0.5)[*0.9]{
\rotatebox{90}{\footnotesize $0,...,B-A$}} \ar[ll]^[*0.8]{\txt{\footnotesize $0,...,B-A-1$}}& & *[o][F-]{\st{Q-P} }\ar[ll]^(0.47)[*0.8]{\txt{\footnotesize $0,...,C-B+A-2$}}\ar[uuuu]^(0.5)[*0.9]{\rotatebox{90}{\footnotesize $0,...,C-B+A-1$}}
 & & &\\
& & & & & & & & &\\
*[o][F-]{\st{N} } \ar@/^8ex/[uuuuuuuu]^[*0.9]{\txt{\rotatebox{-90}{\footnotesize $0$}}}& & & & *[o][F-]{\st{Q}  }\ar[llll]^[*0.9]{\txt{\footnotesize $0,1,..., C-B-1$}}\ar@/^6ex/[uuuullll]_(0.20)*[@!-23]!LD!/^-10pt/[*0.9]{\txt{\footnotesize $0,1,..., C-B$}}& & & &*[o][F-]{\st{P} }\ar[llll]^[*0.9]{\txt{\footnotesize $0,1,... ,C-A-1$}}\ar[uull]_(0.35)*[@!-37]!LD!/^-5pt/[*0.9]{\txt{\footnotesize $0,1,... , C-A$}}
}
\caption{The reduced contact graph $G(\nR \setminus \{0\})$ under the condition $1<A<B<C$.
Here we set $P=(1,0,0)^t, ~Q=(A,1,0)^t,~ N=(B,A,1)^t$. To obtain $G(\nR \setminus \{0\})$ under the condition $1=A<B<C$ from the graph in the figure we just remove the edge from $N-P$ to $\overline{Q-P}$ and the edge from $\overline{N-P}$ to $Q-P$.
If, in addition, the conditions of Theorem~\ref{count-neigh-General} are satisfied then this graph coincides with the neighbor graph $G(\nS)$. In this case each vertex $\alpha$ of the depicted graph corresponds to the nonempty $2$-fold intersection $\B_\alpha=T \cap (T + \alpha)$.  
\label{Gamma2_1}}
\end{figure}

\subsection{The neighbor set}\label{sec:neighbor}
We now turn to the proof of Theorem~\ref{count-neigh-General}, {\it i.e.}, we characterize all triples $A,B,C$ with $1\le A\le B < C$ for which $\nS$ has $14$ elements. According to Lemma~\ref{Red(G(R))} we know the set $\nR$ explicitly.  To establish Theorem~\ref{count-neigh-General} we will have to apply one step of Algorithm~\ref{alg:ST}. If $A=B$ it will turn out that already after one step we produce a reduced set that has at least $17$ elements which entails that $\nS$ has at least $16$ elements (since $0$ is to be removed and since the sequence of graphs produced by the algorithm is nested). If $A\not=B$, according to Figure~\ref{Gamma2_1} the contact set has $15$ elements. Thus there will occur the following two cases. In the first case the first step of the algorithm will produce a reduced set with more than $15$ elements. This entails that $\nS$ has more than $14$ elements. In the second case the first step of the algorithm will produce a reduced set with exactly $15$ elements which has to be $\nR$ again (since it has to contain $\nR$). In this case the algorithm stops after one step and we conclude that $\nR\setminus\{0\}=\nS$ has $14$ elements.

We recall that $\nS \supset {\rm Red}(\nR+\nR)\setminus\{0\}$. 
We first deal with the cases $A,B,C$ that satisfy none of the conditions of Theorem~\ref{count-neigh-General}. By taking the complement of the union of conditions (1) and (2) we conclude that we have to deal with the following three cases.
\begin{itemize}
\item[(i)] $1\le A=B < C$,
\item[(ii)] $1\le A < B < C$, $C<2(B-A)+2$, 
\item[(iii)] $1\le A < B < C$, $B<2A-1$, and $C<A+B-2$.
\end{itemize}
Indeed, for each of these cases we have to show that $|\nS|>14$. For (i) this is done in Lemma~\ref{Equal-Condi}, for (ii) it follows from Lemma~\ref{cycle-C1}, and (iii) is covered by Lemma~\ref{cycle-C2}. Thus the following three lemmas imply that $|\nS|>14$ if none of the two conditions of Theorem~\ref{count-neigh-General} is satisfied. 

\begin{lemma}\label{Equal-Condi}
If $1\leq A=B<C$, then ${\rm Red}(\nR+\nR)$ has at least $17$ elements.
\end{lemma}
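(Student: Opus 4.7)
My approach is to show directly that $\nR+\nR$ contains at least $17$ elements whose outgoing walks in $G(\nR+\nR)$ are infinite, by exhibiting $\nR$ together with two explicit cycles of $G(\nR+\nR)$ disjoint from $\nR$.

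By Lemma~\ref{Red(G(R))}(2), the set $\nR$ has exactly $13$ elements when $A=B$. Since $0\in\nR$ and $G(\nR)$ is reduced by construction, all of its $13$ vertices persist in ${\rm Red}(\nR+\nR)$; it therefore suffices to produce at least four further survivors. I would first check that the pair $\pm(1,A-1,1)^t$ forms a $2$-cycle of $G(\nR+\nR)$: both elements lie in $\nR+\nR\setminus\nR$ by the decomposition $(1,A-1,1)^t=(1,0,0)^t+(0,A-1,1)^t$, and formula \eqref{digit} yields $M(1,A-1,1)^t=(-C,1-A,-1)^t$; taking $d=0$ and $d'=C-1$ (which satisfies $|d'-d|\le C-1$) produces the edge $(1,A-1,1)^t\to(-1,1-A,-1)^t$, and by the symmetry \eqref{inverse} the reverse edge is automatic, so both elements survive the reduction.

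Next I would verify that the four elements
\[
(2A-1,A+1,1)^t,\;\;(-1,A-1,1)^t,\;\;(1-2A,-A-1,-1)^t,\;\;(1,1-A,-1)^t
\]
form a $4$-cycle in $G(\nR+\nR)$. Each lies in $\nR+\nR\setminus\nR$ by decompositions such as $(2A-1,A+1,1)^t=(A-1,1,0)^t+(A,A,1)^t$ and $(-1,A-1,1)^t=-(A,1,0)^t+(A-1,A,1)^t$; the last two elements are the negatives of the first two. For each arrow one computes $M\alpha$ using \eqref{digit}, identifies an element of $\nR+\nR$ sharing the second and third coordinates of the intended target, and checks that the required digit shift $k=d'-d$ lies in $[-(C-1),C-1]$. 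The four shifts turn out to be $C-1$, $C-2A+1$, $-(C-1)$, and $2A-1-C$, respectively. Together with the $13$ elements of $\nR$ and the $2$-cycle, these produce at least $13+2+4=19\ge 17$ distinct survivors in ${\rm Red}(\nR+\nR)$, as required.

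\textbf{Main obstacle.} The principal difficulty is verifying the range condition $|k|\le C-1$ uniformly in $C>A$ for the four shifts of the $4$-cycle. The shifts $C-2A+1$ and $2A-1-C$ can each be positive or negative depending on whether $C\ge 2A-1$ or $C<2A-1$, so a small case split is needed; in both regimes the inequality follows from $A\ge 1$ together with $C\ge A+1$. Enumerating the elements of $\nR+\nR$ with prescribed second and third coordinates (needed to locate each successor) is a finite but tedious case analysis, made considerably shorter by the inversion symmetry~\eqref{inverse}, which implies that it suffices to verify only one edge out of each $\pm$-symmetric pair of arrows.
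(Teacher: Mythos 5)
Your proposal is correct and follows essentially the same route as the paper: the four elements of your $4$-cycle are exactly the paper's $\pm s_1,\pm s_2$ with $s_1=(2A-1,A+1,1)^t$, $s_2=(-1,A-1,1)^t$, traversed with the same digit labels, and the survival argument via cycles in $G(\nR+\nR)$ is identical. Your additional $2$-cycle on $\pm(1,A-1,1)^t$ is a correct but redundant bonus, since the $13+4$ elements already give the required $17$.
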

\begin{proof}
Let $s_1=(2A-1,A+1,1)^t$ and $s_2=(-1, A-1, 1)^t$. We claim that $G(\nR+\nR)$ contains the cycle 
\begin{equation}\label{eq:fig4statesubs}
s_1\xrightarrow{0|C-1}s_2\xrightarrow{A-1|C-A}-s_1\xrightarrow{C-1|0}-s_2\xrightarrow{C-A|A-1}s_1.
\end{equation}
The elements $\pm s_1,\pm s_2$ are elements of $(\nR+\nR)\setminus \nR$ by Lemma~\ref{Red(G(R))}. 
Moreover, the edges claimed in \eqref{eq:fig4statesubs} exist because each label occurring in \eqref{eq:fig4statesubs} is an element of $\D$, and
$M s_1+(C-1,0,0)^t=s_2$ and $M s_2+(C-A,0,0)^t-(A-1,0,0)^t=-s_1$. 
Thus $\pm s_1,\pm s_2\in {\rm Red}(\nR+\nR)$. From Lemma~\ref{Red(G(R))} we know that $\nR$ has $13$ elements and $\pm s_1,\pm s_2\not\in \nR$. Since $\nR \subset {\rm Red}(\nR+\nR)$ this implies that ${\rm Red}(\nR+\nR)$ has at least $17$ elements. 
\end{proof}

\begin{lemma}\label{cycle-C1}
If $1\le A < B < C$ and $C<2(B-A)+2$, then ${\rm Red}(\nR+\nR)$ has at least $17$ elements.
\end{lemma}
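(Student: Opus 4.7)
The plan is to copy the strategy of Lemma~\ref{Equal-Condi}: produce explicit elements of $(\nR+\nR)\setminus \nR$ that participate in a cycle of $G(\nR+\nR)$. Such elements automatically belong to ${\rm Red}(\nR+\nR)$, and since $\nR$ itself survives reduction (each of its vertices has a successor inside $\nR$ by Table~\ref{table-contact}), any new surviving vertices enlarge $|{\rm Red}(\nR+\nR)|$ beyond the $15$ elements of $\nR$ listed in Lemma~\ref{Red(G(R))}~(1). I only need two extra vertices.

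The natural candidate is obtained by doubling the outermost contact vertex: put
\[
s = 2\,(B-A+1,\,A-1,\,1)^t = (2(B-A+1),\,2(A-1),\,2)^t.
\]
Then $\pm s\in \nR+\nR$ because $\pm(B-A+1,A-1,1)^t\in \nR$ by Lemma~\ref{Red(G(R))}, while $\pm s\notin\nR$ since every element of $\nR$ has third coordinate in $\{-1,0,1\}$ whereas the third coordinates of $\pm s$ are $\pm 2$. Moreover $s\neq -s$, as the third coordinate of $s$ is nonzero.

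The key step is to exhibit an edge $s\xrightarrow{0|d'}-s$ in $G(\Z^3)$; the symmetry \eqref{inverse} then automatically yields the reverse edge $-s\xrightarrow{d'|0}s$, so that $\{s,-s\}$ forms a $2$-cycle in $G(\nR+\nR)$. A direct computation using \eqref{digit} gives $Ms = (-2C,\,-2(A-1),\,-2)^t$, whose second and third coordinates already coincide with those of $-s$. Hence the defining relation $Ms+d'-d = -s$ collapses to a single equation in the first coordinate, which with $d=0$ has the unique solution $d' = 2(C-B+A-1)$. The main (and essentially only) obstacle is to check that this label lies in the admissible range $\D = \{0,\ldots,C-1\}$: non-negativity follows at once from $C>B$ and $A\geq 1$, while the upper bound $d'\leq C-1$ is equivalent to $C\leq 2(B-A)+1$, which is exactly the hypothesis $C<2(B-A)+2$.

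Once the $2$-cycle is in place, every vertex of $\nR\cup\{\pm s\}$ has a successor inside $\nR\cup\{\pm s\}$, so this set is contained in ${\rm Red}(\nR+\nR)$. Since $\pm s\notin \nR$ and $s\neq -s$, this yields $|{\rm Red}(\nR+\nR)| \geq 15 + 2 = 17$, as required.
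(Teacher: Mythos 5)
Your proof is correct and takes essentially the same approach as the paper: the paper's own argument uses the very same vertex $s=(2(B-A)+2,2(A-1),2)^t=2(B-A+1,A-1,1)^t$ and the same two-cycle between $s$ and $-s$ in $G(\nR+\nR)$, merely labelling the edge with the digit pair $2(B-A)-C+1\,|\,C-1$ instead of your $0\,|\,2(C-B+A-1)$. Both label choices realize the same difference $d'-d=2(C-B+A-1)$ and lie in $\D$ exactly under the hypothesis $C<2(B-A)+2$, so the two arguments are interchangeable.
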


\begin{proof}
Let $s=(2(B-A)+2, 2(A-1), 2)^t$. We claim that $G(\nR+\nR)$ contains the cycle 
\begin{equation}\label{eq:figadd1subss-s}
s\xrightarrow{2(B-A)-C+1|C-1}-s\xrightarrow{C-1|2(B-A)-C+1}s.
\end{equation}
From Lemma \ref{Red(G(R))} we easily see that $\pm s\in (\nR+\nR)\setminus\nR$. All the labels occurring in the cycle \eqref{eq:figadd1subss-s} are elements of $\D$ by the conditions in the statement of the lemma. The existence of the cycle now follows from verifying \eqref{eq:arrow} for each edge occurring in \eqref{eq:figadd1subss-s}.
This implies the result as in the previous lemma because $\nR$ has $15$ elements by Lemma~\ref{Red(G(R))} and we exhibited $2$ more elements that belong to ${\rm Red}(\nR+\nR)$.
\end{proof}

\begin{lemma}\label{cycle-C2}
If $1\le A < B < C$, $B<2A-1$, and $C<A+B-2$, then ${\rm Red}(\nR+\nR)$ has at least $18$ elements. 

\end{lemma}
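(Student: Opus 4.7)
The plan is to follow the same template used in Lemmas~\ref{Equal-Condi} and \ref{cycle-C1}: exhibit explicit elements of $(\nR+\nR)\setminus \nR$ that sit on a cycle of $G(\nR+\nR)$. Any vertex lying on a cycle is not a sink and survives every iteration of the ``reduction'' operation, so such elements automatically belong to ${\rm Red}(\nR+\nR)$. Since Lemma~\ref{Red(G(R))} gives $|\nR|=15$ under $1\le A<B<C$, producing at least $3$ new elements (conveniently arranged as one or two symmetric $\pm$-pairs) will establish $|{\rm Red}(\nR+\nR)|\ge 18$.

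Concretely, first I would enumerate the sums $\alpha+\alpha'$ with $\alpha,\alpha'\in\nR$ whose third coordinate has absolute value $\le 2$ and compare them with the list \eqref{Rp1} to single out candidates lying in $(\nR+\nR)\setminus\nR$. Motivated by the cycle $s\xrightarrow{2(B-A)-C+1\,|\,C-1}-s$ from Lemma~\ref{cycle-C1}, which exploits the hypothesis $C<2(B-A)+2$ to force the label into $\D=\{0,\dots,C-1\}$, I would look for an analogous pair $s_1,s_2$ whose $M$-image shift uses the companion inequality $C<A+B-2$. The natural choice is to start from vectors built as sums like $(B,A,1)^t+(A-1,1,0)^t$, $(B-1,A,1)^t+(A,1,0)^t$, or $(B-A+1,A-1,1)^t+(A,1,0)^t$, all of which sit outside the symmetric set \eqref{Rp1} precisely because $B<2A-1$. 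For each candidate $s$ one computes $Ms$ and then seeks a translation $d'-d$ with $d,d'\in\D$ that lands on another candidate $\pm s'$; the inequality $C<A+B-2$ is exactly what lets those digit labels fit inside $\D$.

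Once the cycle is identified, the verification reduces to three purely mechanical checks, just as in the previous two lemmas: (a) each vertex $s_i$ of the cycle is of the form $\alpha+\alpha'$ with $\alpha,\alpha'\in\nR$, so that $s_i\in\nR+\nR$; (b) each vertex $s_i$ is \emph{not} in $\nR$, which one reads off from the explicit lists \eqref{Rp1}; and (c) each edge $s_i\xrightarrow{d|d'}s_{i+1}$ satisfies $Ms_i+d'-d=s_{i+1}$ with $0\le d,d'\le C-1$, using the two hypotheses $B<2A-1$ and $C<A+B-2$ to certify the digit bounds. Combined with the symmetry \eqref{inverse}, a cycle of the form $s_1\to s_2\to -s_1\to -s_2\to s_1$ contributes four new vertices, yielding $|{\rm Red}(\nR+\nR)|\ge 19$, which is more than enough.

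The main obstacle is guessing the correct vectors $s_1,s_2$: both hypotheses $B<2A-1$ and $C<A+B-2$ must enter the verification, so the candidates need to sit in the ``corner'' of $\nR+\nR$ cut out jointly by these two inequalities. This requires a careful case distinction, especially near boundary subcases such as $B=2A-2$ or $C=A+B-3$, where one of the digit labels is at its extremal value $0$ or $C-1$ and the cycle might have to be extended to length six to keep all edge labels admissible. Handling these boundary configurations uniformly is the delicate part; the bulk of the proof is then a routine arithmetic check analogous to the one in Lemma~\ref{cycle-C1}.
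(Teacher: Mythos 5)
Your overall strategy is exactly the one the paper uses: exhibit a cycle of $G(\nR+\nR)$ whose vertices lie in $(\nR+\nR)\setminus\nR$, note that vertices on a cycle survive the reduction, and add the count to $|\nR|=15$. However, the entire mathematical content of this lemma is the explicit cycle itself, and you do not produce it; you only describe how you would search for it and concede that ``guessing the correct vectors'' is the main obstacle. As written, the proposal is therefore a proof plan with the decisive step missing, not a proof.

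Moreover, the concrete guesses you float would not pan out. The paper's cycle is $t_1 \xrightarrow{2A-B-2\,|\,C-1} t_2 \xrightarrow{B-A-1\,|\,C-B} t_3 \xrightarrow{C-1\,|\,A+B-C-3} t_1$ with $t_1=(A+B-2,A+1,1)^t$, $t_2=(B-2A+1,A-2,1)^t$, $t_3=(-2B+A+1,1-2A,-2)^t$. Two of your three candidate starting points are sinks of $G(\nR+\nR)$ under the stated hypotheses: for $s=(A+B-1,A+1,1)^t$ one gets $Ms=(-C,A-1,1)^t$, so any successor would have to lie in the set $\{(x,A-1,1)^t;\,B-A-1\le x\le B-A+2\}$, while the reachable first coordinates $d'-d-C$ are all $\le -1<B-A-1$; a similar computation kills $(B+1,A,1)^t$. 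Only the shift to $(A+B-2,A+1,1)^t$ (second coordinate of $Ms$ equal to $A-2$, where $B<2A-1$ makes the target window reachable) works. Also, the cycle is not of the $\pm$-symmetric shape $s_1\to s_2\to -s_1\to -s_2$ you anticipate: it is a $3$-cycle whose third vertex has third coordinate $-2$, paired with a separate negative $3$-cycle, and no case distinction on boundary subcases such as $B=2A-2$ is needed once the right vectors are in hand. To complete the argument you must write down $t_1,t_2,t_3$ (or an equivalent cycle), check $t_i\in\nR+\nR$ and $t_i\notin\nR$ against \eqref{Rp1}, and verify that all four digit labels lie in $\{0,\dots,C-1\}$ using $B<2A-1$ (for $2A-B-2\ge 0$) and $C<A+B-2$ (for $A+B-C-3\ge 0$).
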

\begin{proof}
Let $t_1=(A+B-2, A+1, 1)^t,~ t_2=(B-2A+1, A-2, 1)^t,~ t_3=(-2B+A+1, 1-2A, -2)^t$.
We claim that $G(\nR+\nR)$ contains the cycle 
\begin{equation}\label{eq:figadd1subs}
t_1 \xrightarrow{2A-B-2|C-1} t_2 \xrightarrow{B-A-1|C-B}t_3 \xrightarrow{C-1|A+B-C-3} t_1.
\end{equation}
The proof is finished in the same way as the proof of Lemma~\ref{cycle-C1}.
\end{proof}

Before we can turn to the case where condition (1) or (2) of Theorem~\ref{count-neigh-General} is satisfied we need an easy preparation.

\begin{lemma}\label{product-neigh}
For $1\leq A<B<C$ let $\nR$ be the contact set of the $ABC$-tile $T=T(M,\D)$. The Minkowski sum $\nR+\nR$ has $65$ elements. We partition $\nR+\nR$ into $2 \times 10$ subsets according to their second and third coordinates. Indeed, let
\begin{itemize}
\item[(1)] ${\bf G1}:=\{(x,2A,2)^t;~~2B-2\leq x\leq 2B\}$;
\item[(2)] ${\bf G2}:=\{(x,A+1,1)^t;~~A+B-2\leq x\leq A+B\}$;
\item[(3)] ${\bf G3}:=\{(x,2A-1,2)^t;~~2B-A-1\leq x\leq 2B-A+1\}$;
\item[(4)] ${\bf G4}:=\{(x,2A-2,2)^t;~~2B-2A\leq x\leq 2B-2A+2\}$;
\item[(5)] ${\bf G5}:=\{(x,2,0)^t;~~2A-2\leq x\leq 2A\}$;
\item[(6)] ${\bf G6}:=\{(x,A-2,1)^t;~~B-2A\leq x\leq B-2A+2\}$;
\item[(7)] ${\bf G7}:=\{(x,0,0)^t;~~ 0\leq x\leq 2\}$;
\item[(8)] ${\bf G8}:=\{(x,A,1)^t;~~B-2\leq x\leq B+1\}$;
\item[(9)] ${\bf G9}:=\{(x,1,0)^t;~~A-2\leq x\leq A+1\}$;
\item[(10)] ${\bf G10}:=\{(x,A-1,1)^t;~~B-A-1\leq x\leq B-A+2\}$.
\end{itemize}
Then $\nR+\nR$ is the union of the $20$ sets\footnote{Note that $(0,0,0)^t$ occurs in ${\bf G7}$ and in $-{\bf G7}$.} $\pm{\bf G1},\dots, \pm{\bf G10}$. Moreover, $\nR$ is a subset of the union of the sets $\pm{\bf G7},\dots, \pm{\bf G10}$.
\end{lemma}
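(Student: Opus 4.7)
The proof is essentially bookkeeping. Using the explicit list of $15$ elements of $\nR$ from Lemma~\ref{Red(G(R))}, I would proceed in three steps.

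First, I would partition $\nR$ by (second, third) coordinates, exploiting the symmetry $\nR=-\nR$. Set
$P_0=\{(-1,0,0)^t,(0,0,0)^t,(1,0,0)^t\}$,
$P_1=\{(A-1,1,0)^t,(A,1,0)^t\}$,
$P_2=\{(B-A,A-1,1)^t,(B-A+1,A-1,1)^t\}$,
$P_3=\{(B-1,A,1)^t,(B,A,1)^t\}$, and $N_i=-P_i$ for $i=1,2,3$, so that $\nR=P_0\cup P_1\cup N_1\cup P_2\cup N_2\cup P_3\cup N_3$. Because the second and third coordinates of a sum are determined solely by the (second, third) labels of its summands, the analysis of $\nR+\nR$ reduces to tracking first coordinates within each such label.

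Second, I would enumerate the $28$ pairwise Minkowski sums (reduced to $14$ up to the involution $-(X+Y)=(-X)+(-Y)$) and match each to one of the announced groups. A short direct calculation gives $P_3+P_3=\mathbf{G1}$, $P_1+P_3=\mathbf{G2}$, $P_2+P_3=\mathbf{G3}$, $P_2+P_2=\mathbf{G4}$, $P_1+P_1=\mathbf{G5}$, $N_1+P_2=\mathbf{G6}$, $P_0+P_0=\mathbf{G7}\cup(-\mathbf{G7})$, $P_0+P_3=\mathbf{G8}$, $P_0+P_1=\mathbf{G9}$, $P_0+P_2=\mathbf{G10}$, each with exact equality of first-coordinate ranges. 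The ``collapsing'' sums $P_i+N_i$ for $i=1,2,3$ each equal the three-element set $\{(-1,0,0)^t,(0,0,0)^t,(1,0,0)^t\}\subset\mathbf{G7}\cup(-\mathbf{G7})$, while the remaining mixed sums $P_1+P_2$, $P_1+N_3$, $P_2+N_3$, $N_1+P_3$, $N_2+P_3$ each produce three-element subsets of one of $\mathbf{G8}$, $-\mathbf{G10}$, $-\mathbf{G9}$, $\mathbf{G10}$, $\mathbf{G9}$ respectively, and are thus already accounted for. The sums involving two negative groups follow from the sign involution.

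Third, I would verify the count and the final inclusion. The sets $\pm\mathbf{G1},\dots,\pm\mathbf{G10}$ have pairwise distinct (second, third) coordinate labels, with the sole exception that the origin lies in both $\mathbf{G7}$ and $-\mathbf{G7}$. Since $\mathbf{G1},\dots,\mathbf{G7}$ each have $3$ elements and $\mathbf{G8},\mathbf{G9},\mathbf{G10}$ each have $4$ elements, the union contains $2(7\cdot 3+3\cdot 4)-1=65$ elements, as claimed. The inclusion $\nR\subset\pm\mathbf{G7}\cup\cdots\cup\pm\mathbf{G10}$ is immediate by inspection of the explicit list in Lemma~\ref{Red(G(R))}, since every element there has (second, third) coordinate equal to one of $(0,0)$, $\pm(1,0)$, $\pm(A-1,1)$, or $\pm(A,1)$ with first coordinate in the required range.

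The main obstacle is organizational rather than conceptual: one must carefully verify both that every Minkowski sum lands in some $\pm\mathbf{G}_i$ and that every element of every $\pm\mathbf{G}_i$ is realized by at least one such sum, without over- or under-counting overlaps. A systematic $4\times 4$ table of sums of the groups $P_0,P_1,P_2,P_3$ together with the sign symmetry $N_i=-P_i$ makes this verification transparent.
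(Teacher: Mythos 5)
Your proof is correct and takes the same route as the paper, which simply declares the lemma an immediate consequence of the explicit description of $\nR$ in Lemma~\ref{Red(G(R))}; you have just carried out that direct verification in full, and all the group sums, the disjointness argument via (second, third) coordinate labels, and the count $2(7\cdot 3+3\cdot 4)-1=65$ check out. (The only blemish is the parenthetical ``reduced to $14$'': the involution has four fixed orbits, namely $P_0+P_0$ and $P_i+N_i$ for $i=1,2,3$, so $16$ orbit representatives are needed, which your explicit list does in fact cover.)
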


\begin{proof}
This is an immediate consequence of the definition of $\nR$, see Lemma~\ref{Red(G(R))}.
\end{proof}

\begin{lemma}\label{ReduceG^{(2)}} 
The elements of ${\bf \pm G1, \pm G2, \pm G3, \pm G4, \pm G5, \pm G6}$ are not in ${\rm Red}(\nR+\nR)$ if condition (1) or (2) of Theorem~\ref{count-neigh-General} holds.
\end{lemma}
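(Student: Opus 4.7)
The plan is to show that, under condition~(1) or~(2) of Theorem~\ref{count-neigh-General}, the subgraph of $G(\nR+\nR)$ induced on $\mathcal{B}:=\pm{\bf G1}\cup\cdots\cup\pm{\bf G6}$ is acyclic, from which the lemma follows because iterative sink removal exhausts any finite acyclic directed graph. By the symmetry property~\eqref{inverse} it is enough to trace the outgoing edges from the six positive groups ${\bf G1},\ldots,{\bf G6}$; the corresponding statements for the negative groups follow by interchanging the roles of $d$ and $d'$ in every edge.

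First I would identify all candidate successors. For $\alpha=(x,y,z)^{t}$ any successor in $G(\Z^{3})$ is $\alpha'=(-Cz+d'-d,\; x-Bz,\; y-Az)^{t}$ with $d,d'\in\{0,\ldots,C-1\}$, so the pair $(y',z')$ is entirely determined by $\alpha$ and matches at most one of the twenty groups of Lemma~\ref{product-neigh}, while the first coordinate is constrained to $x'\in[-Cz-(C-1),\;-Cz+(C-1)]$. The edge lies in $G(\nR+\nR)$ iff this interval meets the first-coordinate range of the target group. A direct calculation then shows that for every $\alpha\in{\bf G}i$, $i\in\{1,\ldots,6\}$, the candidate target groups lie again in $\mathcal{B}$, following the pattern
\[
{\bf G1}\to -{\bf G5},\quad {\bf G2}\to {\bf G6},\quad {\bf G3}\to -{\bf G2},\quad {\bf G4}\to -{\bf G1}\cup -{\bf G3},
\]
\[
{\bf G5}\to {\bf G1}\cup {\bf G3}\cup {\bf G4},\quad {\bf G6}\to -{\bf G1}\cup -{\bf G3}\cup -{\bf G4}.
\]
For example, from ${\bf G1}$ the pair $(y',z')\in\{(-2,0),(-1,0),(0,0)\}$ would lead to one of $-{\bf G5},-{\bf G9},\pm{\bf G7}$; but $C>B>A\ge1$ gives $C>A$ and $C\ge2$, which forces the $x'$-interval $[-3C+1,-C-1]$ to miss the ranges of $-{\bf G9}$ and $\pm{\bf G7}$, leaving only $-{\bf G5}\subset\mathcal{B}$.

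Second I would verify acyclicity of the induced subgraph by ruling out every candidate cycle. The short candidate cycles not passing through $\pm{\bf G5}$ (such as ${\bf G3}\to-{\bf G2}\to-{\bf G6}\to{\bf G3}$ or ${\bf G4}\to-{\bf G1}\to{\bf G}\text{-patterns}$) are excluded by a direct computation: the first-coordinate equations along the cycle force at least one required label-difference $d'-d$ to lie outside $[-(C-1),C-1]$, because $C\ge 2(B-A)+2$ under~(1) (respectively $C\ge A+B-2$ under~(2)) pushes the necessary shifts beyond the admissible edge-label range. The remaining candidate cycles must pass through $\pm{\bf G5}$, and the same kind of computation, exploiting the narrow widths (at most~$3$) of the first-coordinate intervals for every group, rules them out.

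The main obstacle is the bookkeeping of the case analysis: six positive groups with three or four representative elements each, several candidate target groups per element, and two separate conditions to be handled (including boundary equalities such as $C=2A-1$ under~(2), where delicate cancellations need to be tracked). Once the successor pattern is fully tabulated and acyclicity established, the cascade is mechanical and the conclusion follows from the definition of ${\rm Red}(\cdot)$ as the largest vertex-set on which the induced subgraph has no sinks.
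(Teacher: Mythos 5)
Your overall strategy is the same as the paper's: show that every successor of an element of $\pm{\bf G1}\cup\cdots\cup\pm{\bf G6}$ stays inside this set, and then show that the induced subgraph is acyclic so that iterated sink removal exhausts it. Your tabulation of candidate successor groups is also consistent with the paper's computations. The gap lies in the acyclicity step. You propose to exclude each candidate cycle of the group-level graph by showing that ``at least one required label-difference $d'-d$ lies outside $[-(C-1),C-1]$,'' i.e.\ that some edge of the cycle fails. This is not true for several of the cycles you must handle. Take $(A,B,C)=(4,7,8)$, which satisfies condition~(1), and the candidate cycle ${\bf G3}\to-{\bf G2}\to-{\bf G6}\to{\bf G3}$. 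All three group-level edges are realized by admissible labels: $(9,7,2)\in{\bf G3}$ maps onto every element of $-{\bf G2}$; the element $-(A+B-2,A+1,1)\in-{\bf G2}$ maps onto $-(B-2A,A-2,1)\in-{\bf G6}$; and $-(B-2A+1,A-2,1)\in-{\bf G6}$ maps onto every element of ${\bf G3}$. No single edge of the cycle can be killed. What actually breaks the cycle is that the element of $-{\bf G6}$ reachable from $-{\bf G2}$ is $-(B-2A,A-2,1)$, whose successors land in ${\bf G1}$ (a sink), whereas the element of $-{\bf G6}$ that maps back into ${\bf G3}$ is the \emph{different} element $-(B-2A+1,A-2,1)$. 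Detecting this forces you to refine the groups into smaller pieces --- exactly the device of the paper, which splits ${\bf G6}$ into the singletons ${\bf G6.1},{\bf G6.2},{\bf G6.3}$ under condition~(1) (and splits ${\bf G1},{\bf G2},{\bf G5}$ under condition~(2)) so that the quotient graphs of Figures~\ref{fig:acyclic123456} and~\ref{fig:con2} become genuinely acyclic. The same phenomenon occurs for the candidate cycle through $\pm{\bf G1}$ and $\pm{\bf G5}$ under condition~(2).

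Your mechanism can be repaired: if you insist on an element-level cycle and compose the coordinate equations around it, the forced choices of elements do eventually push one label out of range (for the cycle above, the step $-{\bf G2.1}\to-{\bf G6.2}$ needs $d'-d=2A-B-1-C<-(C-1)$ under condition~(1)). But then you are tracking individual elements, which is equivalent to the paper's refinement, and you should say so explicitly. A second, related soft spot: an element-level cycle projects only to a \emph{closed walk} in your six-group quotient, not necessarily to a simple cycle, so ruling out the simple quotient cycles one by one does not by itself rule out longer element-level cycles that revisit a group through different elements; the refinement to an acyclic quotient (or an explicit bound using the fact that each group has at most three or four elements) is needed to close this off.
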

\begin{proof}
We first prove the lemma for $A,B,C$ satisfying condition (1) of Theorem~\ref{count-neigh-General}. We split the set ${\bf G6}$ into ${\bf G6.1}=\{(B-2A,A-2,1)^t\}$, ${\bf G6.2}=\{(B-2A+1,A-2,1)^t\}$, and ${\bf G6.3}=\{(B-2A+2,A-2,1)^t\}$. Now we look at the collection of eight sets of vertices given by
\[
\mathcal{G} =\{ \pm{\bf G1}, \pm{\bf G2}, \pm{\bf G3}, \pm{\bf G4}, \pm{\bf G5}, \pm {\bf G6.1}, \pm{\bf G6.2}, \pm{\bf G6.3}\}.
\] 
We prove the following claim. Suppose that $\gamma$ is contained in some $X\in\mathcal{G}$. Then there exists an edge $\gamma\rightarrow \gamma'\in G(\nR+\nR)$ only if $\gamma'$ is contained in a set $Y\in\mathcal{G}$ such that there is an edge between the vertices $X$ and $Y$ in the graph depicted in Figure~\ref{fig:acyclic123456}. Since this graph contains no cycles this claim will imply the result.
\begin{figure}[h]
\hskip 0cm
\xymatrix{
*+[F-]{\st{$\pm${\bf  G1}} }& & *+[F-]{\st{$\pm${\bf G6.1}} }\ar[ll]& & *+[F-]{\st{$\pm${\bf G2}} }\ar[ll]\\
&& *+[F-]{\st{$\pm${\bf G5}} }\ar[rr]\ar[d]\ar[ull]&& *+[F-]{\st{$\pm${\bf G3}} }\ar[u]\\
*+[F-]{\st{$\pm${\bf G6.3}} }\ar[rr]&&*+[F-]{\st{$\pm${\bf G4}} }\ar[lluu]\ar[rru]&&*+[F-]{\st{$\pm${\bf G6.2}} }\ar[u]\\
}
\caption{The possible edges leading away from the sets of vertices $\pm{\bf G1}$, $\pm{\bf G2}$, $\pm{\bf G3}$, $\pm{\bf G4}$, $\pm{\bf G5}$, $\pm {\bf G6.1}$, $\pm{\bf G6.2}$, and $\pm{\bf G6.3}$. \label{fig:acyclic123456}}
\end{figure}
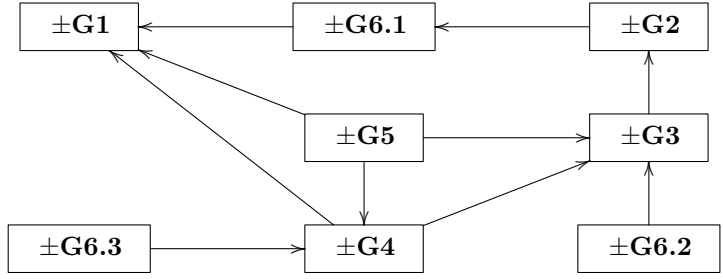

The case {\bf $\pm{\bf G1}$}: The elements in ${\bf G1}$ are sinks in $G(\nR+\nR)$ under condition~(1).
Indeed, let $s=(x, 2A, 2)^t\in {\bf G1}$, then $Ms=(-2C, x-2B, 0)^t$. Since $2B-2 \le x \le 2B$, the possible successors of $s$ are of the form 
$s'\in Ms+\D-\D= \{(-2C+d, x-2B, 0)^t;\; d\in \D-\D\}$ with 
$2B-2 \le x \le 2B$.
According to its second and third coordinates, in the cases $x=2B-1$ and  $x=2B$ the element $s' \in \nR+\nR$ can only belong to $-{\bf G9}$ and ${\bf G7}$, respectively. However, as the first coordinate of $s'$ varies between $-3C+1$ and $-C-1$, this is impossible for both cases. Hence, for these choices of $x$ the element $s$ is a sink of $G(\nR+\nR)$. For the case $x=2B-2$ the element $s'\in \nR+\nR$ can only be an element of $-{\bf G5}$. However, since $C>B\geq 2A-1$, we have $-C-1<-2A$ this is impossible also in this case. Thus this case is done since by symmetry of $\nR+\nR$ also the elements in $-{\bf G1}$ are sinks in $G(\nR+\nR)$ under condition~(1).

The case {\bf $\pm{\bf G2}$}: The elements in ${\bf G2}$ can only have successors in $G(\nR+\nR)$ that are contained in $\pm{\bf G6.1}$ under condition~(1). To prove this let $s=(x,A+1,1)^t$, then a possible successor of $s$ is of the form
$
s'=Ms+\D-\D=\{(d-C, x-B, 1)^t;~d\in \D-\D\} 
$
with 
$A+B-2 \le x \le A+B$.
For the case $x=A+B-2$, looking at the second and third coordinate, the successor $s'$ can only be contained in ${\bf G6}$. The first coordinate $d-C$ of $s'$ satisfies $1-2C\le d-C \le -1$. Since condition (1) is in force, $B-2A \ge -1$, thus $s'$ is in ${\bf G6}$ only if $B=2A-1$. In this case, $d-C=-1=B-2A$, hence, $s'\in{\bf G6.1}$.
For $x=A+B-1$, the successor $s'$ can only fall into ${\bf G10}$. This would imply $-1=B-A-1$ and, hence, $A=B$ which contradicts condition (1). Thus in this case we have no successor. 
Finally, for $x=A+B$, the possible successor $s'$ can only be contained in ${\bf G8}$ by its second and third coordinate. But by its first coordinate also this possibility is excluded. Again, this case is done by symmetry.

So far we proved the claim for the edges leading away from $\pm{\bf G1}$ and $\pm{\bf G2}$ in Figure~\ref{fig:acyclic123456}. The remaining cases are routine calculations of the same kind and we omit them.  

Condition $(2)$ of Theorem~\ref{count-neigh-General} can be treated in the same way. In this case we have to subdivide the relevant vertices into nine sets. The corresponding graph, which is acyclic again, is depicted in Figure~\ref{fig:con2}.
\begin{figure} 
\hskip 0cm
\xymatrix{
& &&&\\
*+[F-]{\st{$\pm${\bf G5.1}} }
 \ar@{}[r]^(.25){}="a"^(.54){}="b" \ar "a";"b"
&*+[F-]{\st{$\pm\left({\bf G1 \setminus G1.1}\right)$} }&*+[F-]{\st{$\pm${\bf G6}} }\ar[d]
\ar[dl]
 \ar@{}[l]^(.21){}="a"^(.52){}="b" \ar "a";"b"
 \ar@{}[lld]^(.1){}="a"^(.87){}="b" \ar "a";"b"
&*+[F-]{\st{$\pm${\bf G2.1}} }\ar[l]\\
*+[F-]{\st{$\pm${\bf G1.1}} }\ar[u]&*+[F-]{\st{$\pm${\bf G4}} }\ar[u]\ar[l]\ar[r]&*+[F-]{\st{$\pm${\bf G3}} }
 \ar@{}[rd]^(.19){}="a"^(.8){}="b" \ar "a";"b"
&&\\
&*+[F-]{\st{$\pm\left({\bf G5 \setminus G5.1 }\right)$} }\ar[u]\ar[ur]&& *+[F-]{\st{$\pm\left({\bf G2 \setminus G2.1 }\right)$} }\\
}
\caption{The graph corresponding to condition (2). Here we use the additional notations ${\bf G1.1}=\{(2B-2,2A,2)^t\},~{\bf G2.1}=\{(A+B-2,A+1,1)^t\}, ~{\bf G5.1}=\{(2A, 2, 0)^t\}$.\label{fig:con2}}
\end{figure}
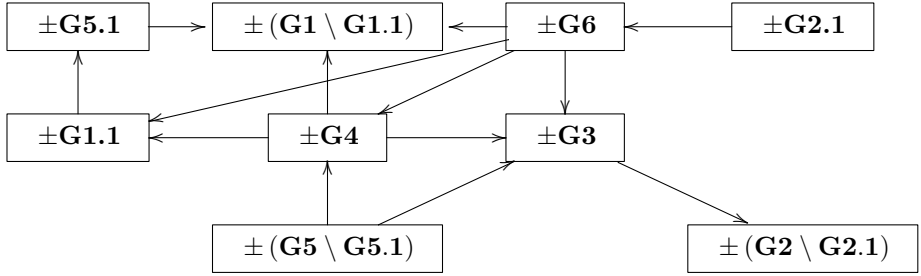 
\end{proof}

We are now able to finish the proof of Theorem~\ref{count-neigh-General}.

\begin{proof}[Proof of Theorem~\ref{count-neigh-General}]
To prove the ``only if'' part, we have to show that $|\nS| >14$ if none of the two conditions of the theorem are in force. Because $\nS \supset {\rm Red}(\nR+\nR)\setminus\{0\}$, this follows immediately from Lemmas~\ref{Equal-Condi},~\ref{cycle-C1}, and~\ref{cycle-C2}.

To prove the ``if'' part, we apply Algorithm~\ref{alg:ST}, and the first step is to calculate ${\rm Red}(\nR+\nR)$. From Lemma \ref{product-neigh}, we already know that $\nR+\nR$ has $65$ elements. Since  $\nR\setminus\{0\}$ has $14$ elements by Lemma~\ref{Red(G(R))} we have to show that ${\rm Red}(\nR+\nR) = \nR$. For this it suffices to prove that no element of $(\nR+\nR) \setminus \nR$ is contained in ${\rm Red}(\nR+\nR)$.

By Lemma~\ref{ReduceG^{(2)}}, none of the elements contained in $\pm{\bf G1} \cup \dots \cup \pm{\bf G6}$ is an element of ${\rm Red}(\nR+\nR)$. Thus it remains to show that each element contained in $\pm({\bf G7} \cup {\bf G8} \cup {\bf G9} \cup {\bf G10}) \setminus \nR$ is not contained in ${\rm Red}(\nR+\nR)$. By symmetry we can confine ourselves to proving the claim that $({\bf G7} \cup {\bf G8} \cup {\bf G9} \cup {\bf G10}) \setminus \nR$  does not contain an element of ${\rm Red}(\nR+\nR)$.
 
Assume that condition $(1)$ of the theorem is in force. For condition $(2)$, we can prove the result in the same way.

We start with ${\bf G7} \setminus \nR = \{(2,0,0)^t\}$. Let $s=(2, 0, 0)^t$. Then a possible successor $s'$ of $s$ in $G(\nR+\nR)$ must satisfy 
$
s'\in Ms+\D-\D=\{(d,2,0)^t; d\in \D-\D\}.
$
By the second and third coordinate of $Ms+\D-\D$, we know that $s'$ has to belong to ${\bf G5}$. So it cannot be in ${\rm Red}(\nR+\nR)$ by Lemma~\ref{ReduceG^{(2)}}. 

For the elements $s\in \{(B-2, A, 1)^t,(B+1, A, 1)^t\}={\bf G8} \setminus\nR$ the successor has to be of the form
$
s' = Ms+\D-\D=
\{(d-C, x-B, 0)^t;~d\in \D-\D\}$ 
with $x\in \{B-2,B+1\}$.
For $x=B-2$, we have $s'\in -{\bf G5}$, so $(B-2, A, 1)^t\not\in{\rm Red}(\nR+\nR)$ by Lemma~\ref{ReduceG^{(2)}}. 
For $x=B+1$, the successor of $s$ can only be contained in ${\bf G9}$. However, the first coordinate of the elements of ${\bf G9}$  varies between $A-2$ and $A+1$, thus $s$ is a sink if $A\geq 2$ which always holds under condition (1) of the theorem.

For the elements $s\in\{(A-2, 1, 0)^t,(A+1,1,0)^t\}={\bf G9} \setminus\nR$, the successor has to be of the form
 $
 s'\in Ms+\D-\D=\{(d, x, 1)^t;~d\in \D-\D\}$
with $x\in \{A-2,A+1\}$.
This implies that $s'\in {\bf G2} \cup {\bf G6}$ and, hence, $s\not\in {\rm Red}(\nR+\nR)$ by Lemma~\ref{ReduceG^{(2)}}.

For the elements $s\in\{(B-A-1, A-1, 1)^t,~(B-A+2, A-1, 1)^t\}={\bf G10} \setminus\nR$ the successor has to be of the form
$
s'\in Ms+\D-\D=\{(d-C, x-B, -1)^t;~d\in \D-\D\}$ 
with $x\in\{B-A-1,B-A+2\}$.
This implies that $s'\in (-{\bf G6}) \cup (-{\bf G2})$ and, hence, $s\not\in {\rm Red}(\nR+\nR)$ by Lemma~\ref{ReduceG^{(2)}}. Summing up, we proved the claim.
\end{proof}

\subsection{The directed graphs of multiple intersections}\label{sec:34}
Let $T$ be a $\Z^m$-tile and let $\nS$ be the set of neighbors of $T$. 
For $\ell \ge 1$, the union of all \emph{$(\ell+1)$-fold intersections with $T$} is then given by 
\begin{equation}\label{l-fold-union}
\mathcal{I}_{\ell}=\bigcup_{\{\alpha_1,\dots,\alpha_\ell\}\subset\nS } \B_{\alpha_1,\dots,\alpha_{\ell}},
\end{equation}  
where the union is extended over all subsets of $\nS$ containing $\ell$ (pairwise distinct) elements.
We can subdivide  $\B_{\alpha_1,\dots,\alpha_{\ell}}$ by
\begin{equation}\label{eq:setqeB}
\begin{split}
\B_{\alpha_1,\dots,\alpha_{\ell}}&=M^{-1}\Big((T+\D)\cap (T+\D+M\alpha_1)\cap \dots\cap (T+\D+M\alpha_{\ell})\Big) \\
&=M^{-1}\Big(\bigcup_{d,d_1,\dots,d_{\ell}\in \D}(T\cap(T+M\alpha_1+d_1-d)\cap\dots\cap(T+M\alpha_{\ell}+d_{\ell}-d))+d\Big)\\
&=M^{-1}\Big(\bigcup_{\begin{subarray}{c}
d,d_1,\dots,d_{\ell}\in \D\\
\alpha_i'=M\alpha_i+d_i-d\\
i=1,2,\dots,\ell
\end{subarray}
}(\B_{\alpha_1',\alpha_2',\dots,\alpha_{\ell}'}+d)\Big)
\end{split}
\end{equation}
and by Definition \ref{Graph} we can rewrite this as 
\begin{equation}\label{L-foldGra1}
\begin{split}
\B_{\alpha_1,\dots,\alpha_{\ell}}&=\bigcup_{d\in \D}\bigcup_{
d_1,\dots,d_{\ell}\in \D
}\bigcup_{\begin{subarray}{c}
\alpha_i\xrightarrow{d|d_i}  \alpha_i'\\
i=1,2,\dots,\ell
\end{subarray}
}M^{-1}(\B_{\alpha_1',\alpha_2',\dots,\alpha_{\ell}'}+d).
\end{split}
\end{equation}
Of course, $\B_{\alpha_1,\ldots,\alpha_\ell}$ can be nonempty only if $\{\alpha_1,\ldots,\alpha_\ell\} \subset \nS$. 
The sets $\mathcal{I}_{\ell}$ can be determined by using the following graphs.

\begin{definition}\label{def:powerG2G3G4}
Let $\Gamma\subset\Z^m$. The $\ell$-fold power $G_\ell(\Gamma):=\times_{j=1}^\ell G(\Gamma)$ is defined by recursively removing all sinks from the following graph $G'_\ell(\Gamma)$:
\begin{itemize}
\item The vertices of $G'_\ell(\Gamma)$ are the sets $\{\alpha_1,\dots,\alpha_\ell\}$ consisting of $\ell$ (pairwise distinct) elements $\alpha_i$ of $G(\Gamma)$.
\item There exists an edge
$$\{\alpha_{11},\dots,\alpha_{1\ell}\}\xrightarrow{d}\{\alpha_{21},\dots,\alpha_{2\ell}\}$$
in $G'_\ell(\Gamma)$ if and only if there exist the edges
$$\alpha_{1i}\xrightarrow{d|d_i}\alpha_{2i} \quad (1\leq i\leq \ell)$$
in $G(\Gamma)$ for certain $d_1,\dots,d_\ell\in \D$.
\end{itemize}
\end{definition}
Using this definition we can write \eqref{L-foldGra1} as
\begin{equation}\label{L-foldGra2}
\begin{split}
\B_{\alpha_1,\dots,\alpha_{\ell}}&=\bigcup_{
\begin{subarray}{c}
d \in \D,\, \{\alpha_1',\dots,\alpha_{\ell}'\} \subset \nS \\
\{\alpha_1,\dots,\alpha_{\ell}\}\xrightarrow{d}\{\alpha_1',\dots,\alpha_{\ell}'\}\in \times_{j=1}^{\ell}G(\nS)
\end{subarray}
}
M^{-1}(\B_{\alpha_1',\alpha_2',\dots,\alpha_{\ell}'}+d)
\end{split}
\end{equation}
which can be regarded as the defining equation for the collection of nonempty compact sets $\{ \B_{\Bold{\alpha}};\, \Bold{\alpha} \in  \times_{j=1}^{\ell}G(\nS)\}$ as attractor of a graph-directed iterated function system (in the sense of Mauldin and Williams~\cite{MauldinWilliams88}) directed by the graph $\times_{j=1}^{\ell}G(\nS)$. We will often need the $k$-fold iteration of this set equation. To write this iteration in a convenient way we define the functions 
\[
f_d:\R^m \to \R^m  \quad x \mapsto M^{-1}(x+d) \qquad(d\in \D),
\]
which are contractions w.r.t.\ some suitable norm because $M$ is an expanding matrix. Since we will often deal with compositions of these functions, we will use the abbreviation 
\[
f_{d_1d_2\dots d_{k}}=\begin{cases} f_{d_1}\circ\cdots\circ f_{d_{k}}, & k > 0, \\
\mathrm{id},& k=0
\end{cases}
\]
for $d_1,\ldots, d_{k}\in\D$. With this notation we get
 
\begin{equation}\label{L-foldGra3}
\begin{split}
\B_{\alpha_1,\dots,\alpha_{\ell}}&=\bigcup_{\{\alpha_1,\dots,\alpha_{\ell}\}\xrightarrow{d}\{\alpha_1',\dots,\alpha_{\ell}'\}\in \times_{j=1}^{\ell}G(\nS)\
}f_d(\B_{\alpha_1',\alpha_2',\dots,\alpha_{\ell}'})\\
&=\bigcup_{\{\alpha_1,\dots,\alpha_{\ell}\}\xrightarrow{d_1}\cdots\xrightarrow{d_k}\{\alpha_1^{(k)},\dots,\alpha_{\ell}^{(k)}\}\in \times_{j=1}^{\ell}G(\nS)\
}f_{d_1\dots d_k}(\B_{\alpha_1^{(k)},\dots,\alpha_{\ell}^{(k)}}),
\end{split}
\end{equation}
where the latter union is extended over all walks of length $k$ in $\times_{j=1}^{\ell}G(\nS)$ starting at $\{\alpha_1,\dots,\alpha_{\ell}\}$.
We can now characterize $\mathcal{I}_{\ell}$ as follows (see \cite[Appendix]{StrichartzWang99} or \cite[Proposition~6.2]{ScheicherThuswaldner03}).
\begin{proposition}\label{Char-boundary}
Let $\ell\geq 1$ and choose $\alpha_{01},\dots,\alpha_{0\ell} \in \Z^m\setminus\{0\}$ pairwise distinct. Then the following three assertions are equivalent.
\begin{itemize}
\item[(1)] $$x=\sum_{j\geq 1} M^{-j}d_j\in \B_{\alpha_{01},\dots,\alpha_{0\ell}}.$$
\item[(2)] There exists an infinite walk 
$$\{\alpha_{01},\dots,\alpha_{0\ell}\}\xrightarrow{d_1}\{\alpha_{11},\dots,\alpha_{1\ell}\}\xrightarrow{d_2}\{\alpha_{21},\dots,\alpha_{2\ell}\}\xrightarrow{d_3}\cdots$$
in $\times_{r=1}^{\ell}G(\nS)$.
\item[(3)] There exist $\ell$ infinite walks 
$$\alpha_{0i}\xrightarrow{d_1}\alpha_{1i}\xrightarrow{d_2}\alpha_{2i}\xrightarrow{d_3}\cdots\quad (1\leq i\leq \ell)$$
in $G(\nS)$.
\end{itemize}
\end{proposition}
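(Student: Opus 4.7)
The plan is to establish the cyclic chain of implications $(2)\Leftrightarrow(3)$ and $(1)\Leftrightarrow(2)$, both by fairly direct arguments: the first from the construction of the product graph, the second by iterating the set equation \eqref{L-foldGra3}.

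The equivalence $(2)\Leftrightarrow(3)$ is essentially tautological from Definition~\ref{def:powerG2G3G4}. An edge $\{\alpha_{k1},\dots,\alpha_{k\ell}\}\xrightarrow{d_{k+1}}\{\alpha_{k+1,1},\dots,\alpha_{k+1,\ell}\}$ of $\times_{r=1}^{\ell}G(\nS)$ exists \emph{by definition} precisely when there are $\ell$ synchronized edges $\alpha_{ki}\xrightarrow{d_{k+1}|d'_{k+1,i}}\alpha_{k+1,i}$ in $G(\nS)$ sharing their first label. The only point to clarify is the iterative sink removal hidden in Definition~\ref{def:powerG2G3G4}: an infinite walk in $G'_\ell(\nS)$ cannot visit any sink (a sink has no successor), so by induction it cannot visit any vertex that is removed at any stage of the reduction; thus walks that are infinite in $G'_\ell(\nS)$ are exactly walks in $\times_{r=1}^\ell G(\nS)=G_\ell(\nS)$.

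For $(1)\Rightarrow(2)$ I would unwind the set equation \eqref{L-foldGra3}. Assume $x\in \B_{\alpha_{01},\dots,\alpha_{0\ell}}$. Then $x=f_{d_1}(x_1)$ for some $d_1\in\D$ and some $x_1\in \B_{\alpha_{11},\dots,\alpha_{1\ell}}$ with $\{\alpha_{01},\dots,\alpha_{0\ell}\}\xrightarrow{d_1}\{\alpha_{11},\dots,\alpha_{1\ell}\}$ in $\times_{r=1}^{\ell}G(\nS)$. Iterating yields a digit sequence $(d_j)_{j\geq 1}$ and an infinite walk $\{\alpha_{k1},\dots,\alpha_{k\ell}\}_{k\geq 0}$ with $x\in f_{d_1\cdots d_k}(\B_{\alpha_{k1},\dots,\alpha_{k\ell}})\subset f_{d_1\cdots d_k}(T)$ for every $k$. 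Because $M$ is expanding and $T$ is bounded, $\operatorname{diam} f_{d_1\cdots d_k}(T)\to 0$, hence $x=\lim_{k\to\infty} f_{d_1\cdots d_k}(0)=\sum_{j\geq 1}M^{-j}d_j$. This exhibits a digit expansion of $x$ whose labels trace out the required walk, which is what (1) asks for (note that (1) only requires \emph{existence} of such an expansion, so potential non-uniqueness is harmless).

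For $(2)\Rightarrow(1)$, given an infinite walk with labels $(d_j)_{j\geq 1}$, set $x=\sum_{j\geq 1}M^{-j}d_j$. Iterating \eqref{L-foldGra3} gives
\[
\B_{\alpha_{01},\dots,\alpha_{0\ell}} \;\supset\; f_{d_1\cdots d_k}\!\left(\B_{\alpha_{k1},\dots,\alpha_{k\ell}}\right)
\]
for every $k\geq 1$, and these sets are nonempty because a vertex surviving the sink reduction of Definition~\ref{def:powerG2G3G4} corresponds to a nonempty intersection (if $\B_{\Bold{\alpha}}=\emptyset$ then the right hand side of \eqref{L-foldGra3} is empty, forcing $\Bold{\alpha}$ to be a sink in some stage of the reduction, a contradiction). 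Picking any $y_k\in \B_{\alpha_{k1},\dots,\alpha_{k\ell}}\subset T$, we have $f_{d_1\cdots d_k}(y_k)=\sum_{j=1}^kM^{-j}d_j+M^{-k}y_k \to x$, and since $\B_{\alpha_{01},\dots,\alpha_{0\ell}}$ is compact, $x\in\B_{\alpha_{01},\dots,\alpha_{0\ell}}$.

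The only point requiring a little care is the correspondence between sink-free vertices of $\times_{r=1}^{\ell}G(\nS)$ and nonempty $(\ell+1)$-fold intersections, which I would handle in a short preliminary observation as above. Once that is in place the three implications are formal consequences of \eqref{L-foldGra3} and the expansion property of $M$.
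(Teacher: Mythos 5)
The paper does not actually prove this proposition; it is quoted from Strichartz--Wang and Scheicher--Thuswaldner, so your attempt can only be measured against the standard argument in those references. Your overall strategy (walks $\leftrightarrow$ digit expansions via the set equation, plus the tautological $(2)\Leftrightarrow(3)$) is the right one, and the $(2)\Leftrightarrow(3)$ part, including the remark about sink removal and the preservation of pairwise distinctness, is fine. But there are two genuine gaps in the other two implications.

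First, in $(1)\Rightarrow(2)$ you prove a weaker statement than the one asserted. The digits $d_1,d_2,\dots$ are shared data across (1), (2) and (3): given the \emph{particular} expansion $x=\sum_{j\ge1}M^{-j}d_j$, assertion (2) demands a walk carrying exactly the labels $d_1,d_2,\dots$. Unwinding \eqref{L-foldGra3} produces a walk whose labels form \emph{some} expansion of $x$, and since points of $T$ can have several $\D$-expansions, your parenthetical ``non-uniqueness is harmless'' changes the content of the proposition rather than proving it. Second, in $(2)\Rightarrow(1)$ your justification that every vertex surviving the sink reduction has $\B_{\Bold{\alpha}}\neq\emptyset$ is fallacious: if all successors $\Bold{\alpha}'$ of $\Bold{\alpha}$ have $\B_{\Bold{\alpha}'}=\emptyset$ then the union in \eqref{L-foldGra3} is empty as a \emph{set} without its index set being empty, so $\Bold{\alpha}$ need not be a sink (a cycle of vertices with empty intersections would survive every reduction step). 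In fact this nonemptiness is logically equivalent to $(2)\Rightarrow(1)$ itself, so invoking it there is circular. Both gaps are closed by the same standard computation, which your write-up never performs: decompose the product walk into its $\ell$ component walks $\alpha_{0i}\xrightarrow{d_1|d^{(i)}_1}\alpha_{1i}\xrightarrow{d_2|d^{(i)}_2}\cdots$ and telescope the relation $\alpha_{ki}=M\alpha_{k-1,i}+d^{(i)}_k-d_k$ to get $\sum_{j=1}^{k}M^{-j}d^{(i)}_j=\sum_{j=1}^{k}M^{-j}d_j-\alpha_{0i}+M^{-k}\alpha_{ki}$; letting $k\to\infty$ (the $\alpha_{ki}$ range over the finite set $\nS$) gives $x-\alpha_{0i}=\sum_{j\ge1}M^{-j}d^{(i)}_j\in T$ for each $i$, hence $x\in\B_{\alpha_{01},\dots,\alpha_{0\ell}}$ with no nonemptiness input; run backwards, defining $\alpha_{ki}$ by this formula from expansions of $x-\alpha_{0i}$ and checking $\alpha_{ki}\in\nS\setminus\{0\}$, it yields the walk with the prescribed labels $d_1,d_2,\dots$ for $(1)\Rightarrow(2)$.
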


The set equation \eqref{L-foldGra3} yields a sequence of collections of sets that cover $\B_{\alpha_1,\dots,\alpha_{\ell}}$. Namely, for $\Bold{\alpha}^{(0)}=\{\alpha_1,\ldots,\alpha_\ell\} \in \times_{j=1}^{\ell}G(\nS)$ we define 
\begin{equation}\label{eq:CkAlpha}
\mathcal{C}_k(\Bold{\alpha}^{(0)}) :=
\left\{
f_{d_1\dots d_{k-1}}(\B_{\Bold{\alpha}^{(k-1)}}) ;\; \Bold{\alpha}^{(0)} \xrightarrow{d_1}\cdots\xrightarrow{d_{k-1}}\Bold{\alpha}^{(k-1)}\in \times_{j=1}^{\ell}G(\nS)
\right\}
\end{equation}
and set
\begin{equation}\label{eq:Cellk}
\mathcal{C}^{(\ell)}_k = \bigcup_{\Bold{\alpha}\in \times_{j=1}^{\ell}G(\nS)}\mathcal{C}_k(\Bold{\alpha}).
\end{equation}
We will call $\mathcal{C}_k(\Bold{\alpha})$ the {\em collection of $(k-1)$-th subdivisions} of $\B_{\Bold{\alpha}}$. If $k=2$ we will just call it the {\em collection of subdivisions} of $\B_{\Bold{\alpha}}$. The elements of these collections will be called {\em subtiles} of $\B_{\Bold{\alpha}}$. The collection of subdivisions of $\B_{\Bold{\alpha}} \cup \B_{\Bold{\alpha'}}$ is the union of the collections of subdivisions of $\B_{\Bold{\alpha}}$ and $\B_{\Bold{\alpha'}}$. It should now be clear what we mean by the collections of ($(k-1)$-th) subdivisions of a set $X=M^{-r}(\B_{\Bold{\alpha}} + a)$ with $a\in \Z^m$ and $r\in \N$.

We will need the following lemma.

\begin{lemma}\label{lem:piecesresults}
\mbox{}
\begin{enumerate}
\item \label{lem:piecesresults1}
For $\Bold{\alpha}
\in \times_{j=1}^{\ell}G(\nS)$  and $k\ge 1$ the collection $\mathcal{C}_k(\Bold{\alpha})$ forms a covering of $\B_{\Bold{\alpha}}$.
\item \label{lem:piecesresults2}
Let $k\ge 1$ be given and let
$
X_1,X_2 \in \mathcal{C}_k^{(\ell)}
$
be distinct. Then the intersection $X_1\cap X_2$ is either empty or there exist $\ell' > \ell$, $c\in \Z^m$, and  $\Bold{\alpha}\in \times_{j=1}^{\ell'}G(\nS)$ with $X_1\cap X_2 = M^{-k+1}(\B_{\Bold{\alpha}} + c)$.
\end{enumerate}
\end{lemma}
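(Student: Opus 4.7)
My plan is to prove part~(1) almost directly from the iterated set equation~\eqref{L-foldGra3}. For $k=1$, the collection $\mathcal{C}_1(\Bold{\alpha})=\{\B_{\Bold{\alpha}}\}$ trivially covers $\B_{\Bold{\alpha}}$. For $k\ge 2$, I will apply the second equality of~\eqref{L-foldGra3}: this expresses $\B_{\Bold{\alpha}}$ as the union of the sets $f_{d_1\dots d_{k-1}}(\B_{\Bold{\alpha}^{(k-1)}})$ ranging over all walks of length $k-1$ in $\times_{j=1}^{\ell}G(\nS)$ starting at $\Bold{\alpha}$. By definition of $\mathcal{C}_k(\Bold{\alpha})$, this union is precisely the union of its members, so $\mathcal{C}_k(\Bold{\alpha})$ in fact equals (and hence covers) $\B_{\Bold{\alpha}}$.

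For part~(2) the key idea is to recognise each subtile as the intersection of a canonical collection of scaled translates of $T$. Using $f_{d_1\dots d_{k-1}}(x)=M^{-(k-1)}x+\sum_{j=1}^{k-1}M^{-j}d_j$, I will write, for $i=1,2$,
\[
X_i=f_{d_{i,1}\dots d_{i,k-1}}(\B_{\Bold{\beta}_i})=M^{-(k-1)}(\B_{\Bold{\beta}_i}+a_i)=\bigcap_{\eta\in E_i}M^{-(k-1)}(T+\eta),
\]
where $a_i:=\sum_{j=1}^{k-1}M^{k-1-j}d_{i,j}\in\Z^m$, $\Bold{\beta}_i$ denotes the endpoint of the corresponding walk in $\times_{j=1}^{\ell}G(\nS)$, and $E_i:=(\{0\}\cup\Bold{\beta}_i)+a_i\subset\Z^m$ has cardinality $\ell+1$ (distinct $\eta$'s yield distinct scaled translates since $T$ has no nontrivial translational symmetries).

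The decisive observation is that $E_1=E_2$ forces $X_1=X_2$, because the two intersecting collections coincide. Since $X_1\ne X_2$ by hypothesis, $E_1\ne E_2$, and hence $|E_1\cup E_2|\ge\ell+2$. Picking any $\eta_0\in E_1\cup E_2$ (for instance $\eta_0:=a_1$), setting $c:=\eta_0$ and $\Bold{\alpha}:=(E_1\cup E_2-\eta_0)\setminus\{0\}$, a direct rearrangement yields
\[
X_1\cap X_2=M^{-(k-1)}\bigcap_{\eta\in E_1\cup E_2}(T+\eta)=M^{-(k-1)}(\B_{\Bold{\alpha}}+c),
\]
with $|\Bold{\alpha}|=|E_1\cup E_2|-1\ge\ell+1$. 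If this intersection is nonempty, then $\B_{\Bold{\alpha}}\ne\emptyset$, so every $\alpha\in\Bold{\alpha}$ satisfies $T\cap(T+\alpha)\supseteq\B_{\Bold{\alpha}}\ne\emptyset$, giving $\alpha\in\nS$. Hence $\Bold{\alpha}\in\times_{j=1}^{\ell'}G(\nS)$ with $\ell':=|\Bold{\alpha}|>\ell$, as required.

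I do not foresee a genuine obstacle. The main point is the identification $X_i\leftrightarrow E_i$ between subtiles and their canonical collections of scaled translates: once one recognises that distinct subtiles are represented by distinct such collections, the cardinality bound $|E_1\cup E_2|\ge\ell+2$ is immediate and the rest reduces to a straightforward reindexing of intersections.
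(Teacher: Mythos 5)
Your argument is correct and is essentially the paper's own proof: part~(1) is read off directly from \eqref{L-foldGra3}, and for part~(2) the paper likewise writes each $X_i$ as an intersection of $\ell+1$ scaled translates of $T$ and uses the distinctness of $X_1$ and $X_2$ to conclude that the union of the two index sets has at least $\ell+2$ elements, so that $X_1\cap X_2$ is a shifted copy of some $\B_{\Bold{\alpha}}$ with $|\Bold{\alpha}|>\ell$. The only detail stated a bit loosely is that in the nonempty case $\Bold{\alpha}$ is genuinely a vertex of $\times_{j=1}^{\ell'}G(\nS)$ (i.e., it survives the sink removal, not merely that its elements lie in $\nS$), which follows from Proposition~\ref{Char-boundary}; the paper's proof is equally terse on this point.
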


\begin{proof}
Assertion~\eqref{lem:piecesresults1} follows immediately from \eqref{L-foldGra3} and the definition of $\mathcal{C}_k(\Bold{\alpha})$.

To prove assertion~\eqref{lem:piecesresults2} we conclude from \eqref{L-foldGra3} that  $X_i = M^{-k+1}(T + \beta_{i0}) \cap \cdots \cap M^{-k+1}(T + \beta_{i\ell})$ holds with $\beta_{ij}\in \Z^m$ for $i\in \{1,2\}$ and $j\in\{0,\ldots,\ell\}$. Here $\beta_{ij}$ are pairwise distinct for fixed $i$ and $j\in\{0,\ldots,\ell\}$.
Since $X_1$ and $X_2$ are distinct elements of $\mathcal{C}_k^{(\ell)}$ there exists $\ell' > \ell$  and $\ell'+1$ distinct elements 
\[
\gamma_0,\ldots, \gamma_{\ell'} \in \{\beta_{10},\ldots \beta_{1\ell},\beta_{20},\ldots,\beta_{2\ell}\}
\]
such that $X_1 \cap X_2  = M^{-k+1}(T + \gamma_0) \cap \cdots \cap M^{-k+1}(T + \gamma_{\ell'})$ and, hence, $X_1 \cap X_2$ is either empty or an element of $\mathcal{C}_k(\Bold{\alpha})$ for some $\Bold{\alpha}\in \times_{j=1}^{\ell'}G(\nS)$ as claimed.
\end{proof}

\begin{table}
{\scr 
{\centering
\begin{tabular}{|l|c|c|l|}
\hline
\multicolumn{1}{|c|}{Vertex}&Successors& Label&\multicolumn{1}{|c|}{Conditions}\\
\hline
\vspace{-2mm}&&&\\
{\multirow{3}{*}{$\mybinom{{Q-P}}{N-P}$}} &{$\mybinom{\overline{Q}}{N-Q}$}&$\{0,1,\dots,A-1\}$&- \\\cline{2-3}\cline{3-3}\cline{4-3}\vspace{-2mm}&&&\\
&$\mybinom{\overline{Q-P}}{N-Q}$&$\{0,1,\dots, A-2\}$&$A\geq 2$\\\cline{2-3}\cline{3-3}\cline{4-3}\vspace{-2mm}&&&\\
&{$\mybinom{\overline{Q-P}}{N-Q+P}$}&$\{0,1,\dots,A-2\}$&$A\geq 2$\\

\hline
\vspace{-2mm}&&&\\
{\multirow{3}{*}{$\mybinom{{\overline{P}}}{Q-P}$}} &{$\mybinom{\overline{Q}}{N-Q}$}&$\{A,A+1,\dots,C-B+A-1\}$&- \\\cline{2-3}\cline{3-3}\cline{4-3}\vspace{-2mm}&&&\\
&$\mybinom{\overline{Q-P}}{N-Q}$&$\{A-1,A,\dots, C-B+A-1\}$&-\\\cline{2-3}\cline{3-3}\cline{4-3}\vspace{-2mm}&&&\\
&{$\mybinom{\overline{Q-P}}{N-Q+P}$}&$\{A-1,A,\dots,C-B+A-2\}$&- \\

\hline
\vspace{-2mm}&&&\\
{\multirow{3}{*}{$\mybinom{\overline{N-Q+P}}{\overline{P}}$}} &{$\mybinom{\overline{Q}}{N-Q}$}&$\{C-B+A,\dots,C-1\}$&- \\\cline{2-3}\cline{3-3}\cline{4-3}\vspace{-2mm}&&&\\
&$\mybinom{\overline{Q-P}}{N-Q}$&$\{C-B+A,\dots, C-1\}$&- \\\cline{2-3}\cline{3-3}\cline{4-3}\vspace{-2mm}&&&\\
&{$\mybinom{\overline{Q-P}}{N-Q+P}$}&$\{C-B+A-1,\dots,C-1\}$&- \\

\hline
\vspace{-2mm}&&&\\
{\multirow{3}{*}{$\mybinom{P}{Q}$}} &{$\mybinom{{Q-P}}{N-P}$}&$\{0,1,\dots,C-B\}$&- \\\cline{2-3}\cline{3-3}\cline{4-3}\vspace{-2mm}&&&\\
&$\mybinom{{Q-P}}{N}$&$\{0,1,\dots, C-B-1\}$&-\\\cline{2-3}\cline{3-3}\cline{4-3}\vspace{-2mm}&&&\\
&{$\mybinom{{Q}}{N}$}&$\{0,1,\dots,C-B-1\}$&-\\
\hline

\vspace{-2mm}&&&\\
{\multirow{3}{*}{$\mybinom{{\overline{N-Q}}}{P}$}} &{$\mybinom{{Q-P}}{N-P}$}&$\{C-B+1,\dots,C-A\}$&- \\\cline{2-3}\cline{3-3}\cline{4-3}\vspace{-2mm}&&&\\
&$\mybinom{{Q-P}}{N}$&$\{C-B,\dots, C-A\}$&-\\\cline{2-3}\cline{3-3}\cline{4-3}\vspace{-2mm}&&&\\
&{$\mybinom{{Q}}{N}$}&$\{C-B,\dots,C-A-1\}$&- \\
\hline

\vspace{-2mm}&&&\\
{\multirow{3}{*}{$\mybinom{\overline{N-P}}{\overline{N-Q}}$}} &{$\mybinom{{Q-P}}{N-P}$}&$\{C-A+1,\dots,C-1\}$&$A\geq 2$\\\cline{2-3}\cline{3-3}\cline{4-3}\vspace{-2mm}&&&\\
&$\mybinom{{Q-P}}{N}$&$\{C-A+1,\dots, C-1\}$&$A\geq 2$ \\\cline{2-3}\cline{3-3}\cline{4-3}\vspace{-2mm}&&&\\
&{$\mybinom{{Q}}{N}$}&$\{C-A,\dots,C-1\}$&- \\
\hline

\vspace{-2mm}&&&\\
{\multirow{3}{*}{$\mybinom{{N-Q}}{N-Q+P}$}} &{$\mybinom{\overline{N-Q+P}}{\overline{N}}$}&$\{0,1,\dots,B-A\}$&- \\\cline{2-3}\cline{3-3}\cline{4-3}\vspace{-2mm}&&&\\
&$\mybinom{\overline{N-Q}}{\overline{N}}$&$\{0,1,\dots, B-A-1\}$&-\\\cline{2-3}\cline{3-3}\cline{4-3}\vspace{-2mm}&&&\\
&{$\mybinom{\overline{N-Q}}{\overline{N-P}}$}&$\{0,1,\dots,B-A-1\}$&-\\
\hline

\vspace{-2mm}&&&\\
{\multirow{3}{*}{$\mybinom{{\overline{Q-P}}}{N-Q}$}} &{$\mybinom{\overline{N-Q+P}}{\overline{N}}$}&$\{B-A+1,\dots,B-1\}$&$A\geq 2$ \\\cline{2-3}\cline{3-3}\cline{4-3}\vspace{-2mm}&&&\\
&$\mybinom{\overline{N-Q}}{\overline{N}}$&$\{B-A,\dots, B-1\}$&-\\\cline{2-3}\cline{3-3}\cline{4-3}\vspace{-2mm}&&&\\
&{$\mybinom{\overline{N-Q}}{\overline{N-P}}$}&$\{B-A,\dots,B-2\}$&$A\geq 2$ \\
\hline

\vspace{-2mm}&&&\\
{\multirow{3}{*}{$\mybinom{\overline{Q-P}}{\overline{Q}}$}} &{$\mybinom{\overline{N-Q+P}}{\overline{N}}$}&$\{B,B+1,\dots,C-1\}$&- \\\cline{2-3}\cline{3-3}\cline{4-3}\vspace{-2mm}&&&\\
&$\mybinom{\overline{N-Q}}{\overline{N}}$&$\{B,B+1,\dots, C-1\}$&- \\\cline{2-3}\cline{3-3}\cline{4-3}\vspace{-2mm}&&&\\
&{$\mybinom{\overline{N-P}}{\overline{N-Q}}$}&$\{B-1,\dots,C-1\}$&- \\
\hline

\vspace{-2mm}&&&\\
$\mybinom{\overline{Q-P}}{N-Q+P}$&$\mybinom{\overline{N-Q+P}}{\overline{N-Q}}$&$B-A$&-\\
\hline
\vspace{-2mm}&&&\\
$\mybinom{\overline{Q}}{N-Q}$&$\mybinom{\overline{N}}{\overline{N-P}}$&$B-1$&-\\
\hline
\vspace{-2mm}&&&\\
$\mybinom{\overline{N}}{\overline{N-P}}$&$\mybinom{{P}}{{Q}}$&$C-1$&-\\
\hline
\vspace{-2mm}&&&\\
$\mybinom{{Q-P}}{N}$&$\mybinom{\overline{P}}{{N-Q}}$&$0$&-\\
\hline
\vspace{-2mm}&&&\\
$\mybinom{{Q}}{N}$&$\mybinom{\overline{P}}{{N-P}}$&$0$&-\\
\hline
\vspace{-2mm}&&&\\
$\mybinom{\overline{P}}{{N-P}}$&$\mybinom{\overline{Q}}{\overline{Q-P}}$&$A-1$&-\\
\hline
\vspace{-2mm}&&&\\
$\mybinom{\overline{N}}{\overline{N-Q+P}}$&$\mybinom{{P}}{{N-Q+P}}$&$C-1$&-\\
\hline
\vspace{-2mm}&&&\\
$\mybinom{\overline{N}}{\overline{N-Q}}$&$\mybinom{{P}}{{N}}$&$C-1$&-\\
\hline
\vspace{-2mm}&&&\\
$\mybinom{P}{N}$&$\mybinom{\overline{P}}{{Q-P}}$&$0$&-\\
\hline
\end{tabular}
}}
\medskip
\caption{The graph $G_2(\nS)$ of triple intersections. To each edge $\Bold{\alpha}\xrightarrow{d}\Bold{\alpha}'$ in this table there exists a ``negative version'' $-\Bold{\alpha}\xrightarrow{C-1-d}-\Bold{\alpha}' \in G_2(\nS)$. Here we set $P=(1, 0, 0)^t, Q=(A, 1, 0)^t, N=(B, A, 1)^t$.
\label{tab:triple-graph}}
\end{table}

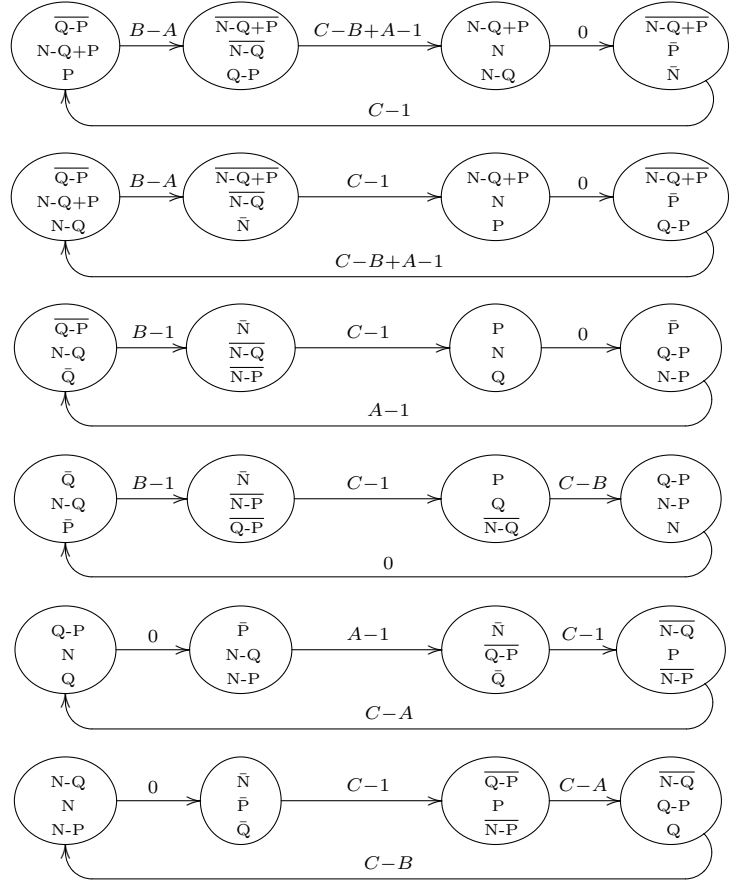
\begin{figure}[htbp]
\hskip 0.5cm
\xymatrix{*[o][F-]{\statev{\textoverline{Q-P}}{N-Q+P}{P} }\ar[r]^{B-A}&*[o][F-]{\statev{\textoverline{N-Q+P}}{\textoverline{N-Q}}{Q-P} }\ar[rr]^{C-B+A-1}&&*[o][F-]{\statev{N-Q+P}{N}{N-Q} }\ar[r]^{0}&*[o][F-]{\statev{\textoverline{N-Q+P}}{\=P}{\=N} }\ar `dr_l[ll] `_u[llll]_{C-1}[llll] \\
*[o][F-]{\statev{\textoverline{Q-P}}{N-Q+P}{N-Q} }\ar[r]^{B-A}&*[o][F-]{\statev{\textoverline{N-Q+P}}{\textoverline{N-Q}}{\=N} }\ar[rr]^{C-1}&&*[o][F-]{\statev{N-Q+P}{N}{P} }\ar[r]^{0}&*[o][F-]{\statev{\textoverline{N-Q+P}}{\=P}{Q-P} }\ar `dr_l[ll]`_u[llll]_{C-B+A-1}[llll]\\
*[o][F-]{\statev{\textoverline{Q-P}}{\mbox{\;\;}N-Q\mbox{\;\;}}{\=Q} }\ar[r]^{B-1}&*[o][F-]{\statev{\=N}{\mbox{\;\;}\textoverline{N-Q}\mbox{\;\;}}{\textoverline{N-P}} }\ar[rr]^{C-1}&&*[o][F-]{\statev{P}{\mbox{\;\;\;}N\mbox{\;\;\;}}{Q} }\ar[r]^{0}&*[o][F-]{\statev{\=P}{\mbox{\;\;}Q-P\mbox{\;\;}}{N-P} }\ar `dr_l[ll]`_u[llll]_{A-1}[llll]\\
*[o][F-]{\statev{\=Q}{\mbox{\;\;}N-Q\mbox{\;\;}}{\=P} }\ar[r]^{B-1}&*[o][F-]{\statev{\=N}{\mbox{\;\;}\textoverline{N-P}\mbox{\;\;}}{\textoverline{Q-P}} }\ar[rr]^{C-1}&&*[o][F-]{\statev{P}{\mbox{\;\;}Q\mbox{\;\;}}{\mbox{\;\;}\textoverline{N-Q}\mbox{\;\;}} }\ar[r]^{C-B}&*[o][F-]{\statev{Q-P}{\mbox{\;\;}N-P\mbox{\;\;}}{N} }\ar `dr_l[ll]`_u[llll]_{0}[llll]\\
*[o][F-]{\statev{\mbox{\;\;}Q-P\mbox{\;\;}}{N}{Q} }\ar[r]^{0}&*[o][F-]{\statev{\=P}{\mbox{\;\;}N-Q\mbox{\;\;}}{N-P} }\ar[rr]^{A-1}&&*[o][F-]{\statev{\=N}{\mbox{\;\;}\textoverline{Q-P}\mbox{\;\;}}{\=Q} }\ar[r]^{C-1}&*[o][F-]{\statev{\mbox{\;\;}\textoverline{N-Q}\mbox{\;\;}}{\mbox{\;\;}P\mbox{\;\;}}{\textoverline{N-P}} }\ar `dr_l[ll]`_u[llll]_{C-A}[llll]\\
*[o][F-]{\statev{\mbox{\;\;}N-Q\mbox{\;\;}}{N}{N-P} }\ar[r]^{0}&*[o][F-]{\statev{\mbox{\;\;}\=N\mbox{\;\;}}{\=P}{\=Q} }\ar[rr]^{C-1}&&*[o][F-]{\statev{\mbox{\;\;}\textoverline{Q-P}\mbox{\;\;}}{P}{\textoverline{N-P}} }\ar[r]^{C-A}&*[o][F-]{\statev{\textoverline{N-Q}}{\mbox{\;\;}Q-P\mbox{\;\;}}{Q} }\ar `dr_l[ll]`_u[llll]_{C-B}[llll]\\
}
\caption{The graph $G_3(\nS)$ of 4-fold intersections of $T$ under the conditions of Theorem~\ref{count-neigh-General}. Here we set $P=(1, 0, 0)^t, Q=(A, 1, 0)^t, N=(B, A, 1)^t$.
  \label{quad-graph}}
\end{figure}

The following lemma is derived by direct calculation.

\begin{lemma}\label{3-4-fold}
Let $T$ be an $ABC$-tile with neighbor graph $G(\nS)$. If $T$ has $14$ neighbors the following assertions hold.
 \begin{itemize}
 \item The graph $G_2(\nS)=\times_{j=1}^2G(\nS)$ has $36$ vertices and is given by\footnote{To save space, in this table and in what follows we will often write $\mybinom{X}{Y}$ instead of $\{X,Y\}$.} Table~\ref{tab:triple-graph}.
 \item The graph $G_3(\nS)=\times_{j=1}^3G(\nS)$ has $24$ vertices and is given by Figure~\ref{quad-graph}.  \item The graph $G_\ell(\nS)=\times_{j=1}^\ell G(\nS)$ is empty for $\ell\ge 4$.
 \end{itemize} 
 By construction, all these graphs are symmetric in the sense that there exists an edge $\Bold{\alpha}\xrightarrow{d}\Bold{\alpha}'$ if and only if $-\Bold{\alpha}\xrightarrow{C-1-d}-\Bold{\alpha}'$.
\end{lemma}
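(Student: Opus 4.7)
The plan is to compute the graphs $G_\ell(\nS)$ directly from Definition~\ref{def:powerG2G3G4}, using the neighbor graph $G(\nS)$ which, under the hypotheses of Theorem~\ref{count-neigh-General}, coincides with the $14$-vertex graph displayed in Figure~\ref{Gamma2_1}. Each outgoing edge of $G(\nS)$ carries a contiguous range of input labels from $\D$, and most vertices admit outgoing edges only for a small subset of $\D$, so the bookkeeping is manageable.

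First I would build $G'_2(\nS)$: going through the $\binom{14}{2}=91$ unordered pairs $\{\alpha_1,\alpha_2\}$ of distinct vertices of $G(\nS)$ and each $d\in\D$, I would check whether both $\alpha_1$ and $\alpha_2$ admit an outgoing edge with common input $d$ in $G(\nS)$; this amounts to intersecting the input ranges read off from Figure~\ref{Gamma2_1}. After iteratively removing sinks, the claim is that exactly the $36$ vertices and edges recorded in Table~\ref{tab:triple-graph} remain. For $G_3(\nS)$ I would use the projection shortcut that every vertex $\{\alpha_1,\alpha_2,\alpha_3\}$ of $G_3(\nS)$ must restrict to a vertex of $G_2(\nS)$ on each of its three $2$-subsets; this drastically cuts down the list of candidate triples to those all of whose $2$-subsets appear in Table~\ref{tab:triple-graph}, and a direct check followed by sink removal yields the $24$ vertices of Figure~\ref{quad-graph}. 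Applying the same projection one step further shows that no quadruple $\{\alpha_1,\ldots,\alpha_4\}\subset\nS$ has all four of its $3$-subsets in $G_3(\nS)$ admitting a common outgoing edge landing in another such quadruple, which gives $G_4(\nS)=\emptyset$ and hence $G_\ell(\nS)=\emptyset$ for every $\ell\geq 4$.

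The claimed symmetry is essentially free. The involution sending $(\alpha,\alpha',d,d')$ to $(-\alpha,-\alpha',C-1-d,C-1-d')$ preserves the defining relation $M\alpha+d'-d=\alpha'$ of Definition~\ref{Graph}, so it induces an automorphism of $G(\Z^3)$; since $\nS=-\nS$ and $\D=\{0,1,\ldots,C-1\}$ is invariant under $d\mapsto C-1-d$, this involution restricts to $G(\nS)$ and descends to each $G_\ell(\nS)$, yielding the required symmetric pairing of edges.

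The main obstacle is the sheer size of the case analysis. The label ranges in Figure~\ref{Gamma2_1} depend in a piecewise fashion on which of the two conditions of Theorem~\ref{count-neigh-General} is in force, so I would need to verify that the combinatorial types of $G_2(\nS)$ and $G_3(\nS)$ do not depend on this distinction. Exploiting the $\alpha\mapsto -\alpha$ symmetry above halves the number of pairs and triples to inspect, and exploiting the sparsity of $G(\nS)$ (each vertex has outgoing edges only for a limited range of inputs) keeps the computation tractable by hand.
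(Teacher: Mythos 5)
Your proposal takes essentially the same route as the paper, whose proof simply notes that under the $14$-neighbor hypothesis $G(\nS)$ coincides with the explicitly known contact graph, constructs $G_2(\nS)$ and $G_3(\nS)$ by direct calculation from Definition~\ref{def:powerG2G3G4}, reads off the emptiness of $G_4(\nS)$ from $G_3(\nS)$, and observes that the symmetry is immediate from the construction. Your organizational devices (projecting candidate vertices to their $2$-subsets, using the $\alpha\mapsto-\alpha$ involution to halve the casework, and checking independence from conditions (1)/(2) of Theorem~\ref{count-neigh-General}) are sound ways of carrying out the same computation.
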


\begin{proof}
By Theorem~\ref{count-neigh-General}, we know that if $G(\mathcal{S})$ has $14$ vertices then it is given by Table~\ref{table-contact} (where the vertex $0$ has to be removed, see also Figure~\ref{Gamma2_1}). So the graphs $G_2(\nS)$ and $G_3(\nS)$ can be constructed from $G(\nS)$ by direct calculation using Definition~\ref{def:powerG2G3G4}. The fact that $G_4(\nS)$ (and, hence, $G_\ell(\nS)$ for $\ell \ge 5$) is empty can be seen easily from $G_3(\nS)$. The symmetry assertion is immediate from the construction of these graphs.
\end{proof}

\section{Topological results}\label{sec:topo}

In this section we establish Theorem~\ref{Main-1}. By Section~\ref{sec:normal} it suffices to prove this theorem for $ABC$-tiles. Since the tiling assertion has already been established in Lemma~\ref{ABCZ3}, it remains to prove assertions \eqref{Main1.sphere} to \eqref{Main1.empty}.  In Section~\ref{sec:easy} we will dispose of the easy cases~\eqref{Main1.point} and~\eqref{Main1.empty} and
Section~\ref{sec:prep3} contains preparations for the proof of~\eqref{Main1.loop}. 
The subsequent subsections contain the crucial parts of the proof. We will prove~\eqref{Main1.loop} in Proposition~\ref{lem:Main1.loop}, \eqref{Main1.sphere} in Proposition~\ref{prop:Main1.surface}, and~\eqref{Main1.disk} in Proposition~\ref{lem:Main1.disk}.

Throughout this section we assume that $T=T(M,\D)$ is an $ABC$-tile with $14$ neighbors.

\subsection{Proof of the easy cases: Theorem~\ref{Main-1}~\eqref{Main1.point} and \eqref{Main1.empty}}\label{sec:easy}
Theorem~\ref{Main-1}~\eqref{Main1.empty} follows immediately from Lemma~\ref{3-4-fold} because the fact that $G_\ell(\nS)$ is empty for $\ell\ge 4$ implies in view of Proposition~\ref{Char-boundary} that there are no points in which five or more tiles of the tiling $T +\Z^3$ intersect.

To prove Theorem~\ref{Main-1}~\eqref{Main1.point} assume that $\Bold{\alpha} \subset \Z^3\setminus\{0\}$ contains three elements. If $\Bold{\alpha}\not\in G_3(\nS)$, then $\B_{\Bold{\alpha}} = \emptyset$ by Proposition \ref{Char-boundary}. If $\Bold{\alpha}\in G_3(\nS)$, then by Lemma~\ref{3-4-fold} (see also Figure \ref{quad-graph}) there exists exactly one infinite walk in $G_3(\nS)$ starting from the vertex $\Bold{\alpha}$. Thus by Proposition \ref{Char-boundary}
the set $\B_{\Bold{\alpha}}$ is a singleton.

Later we will need the following result on $4$-fold intersections.

\begin{lemma}\label{lem:2fourfold}
If $\Bold{\alpha} \in G_2(\nS)$ then the $3$-fold intersection $\B_{\Bold{\alpha}}$ contains exactly two different points that are $4$-fold intersections. If $\Bold{\alpha} $ has more than one outgoing edge in $G_2(\nS)$ then these two points are located in two different subtiles of the first subdivision of $\B_{\Bold{\alpha}}$. 
\end{lemma}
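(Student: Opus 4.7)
The plan is to reduce the statement to the combinatorial data of $G_2(\nS)$ and $G_3(\nS)$ already catalogued in Lemma~\ref{3-4-fold}. Fix $\Bold{\alpha}=\{\alpha_1,\alpha_2\}\in G_2(\nS)$. A point $x\in\B_{\Bold{\alpha}}$ is a $4$-fold intersection precisely when it also belongs to $T+\alpha_3$ for some $\alpha_3\in\nS\setminus\{\alpha_1,\alpha_2\}$, which by Proposition~\ref{Char-boundary} is equivalent to the existence of an infinite walk in $G_3(\nS)$ starting at the $3$-element vertex $\{\alpha_1,\alpha_2,\alpha_3\}$. By Lemma~\ref{3-4-fold} (see Figure~\ref{quad-graph}) the graph $G_3(\nS)$ is a disjoint union of six directed $4$-cycles; every vertex has out-degree $1$ and admits a unique infinite walk, giving a single corresponding point.

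I would then count the number of admissible $\alpha_3$. The $24$ vertices of $G_3(\nS)$ contribute $24\cdot 3=72$ unordered pairs, to be distributed over the $36$ vertices of $G_2(\nS)$; direct inspection of Figure~\ref{quad-graph} confirms the uniform pattern, so each $G_2$-pair appears in exactly two $G_3$-vertices. This gives two candidate $4$-fold points. They are genuinely distinct: if they coincided, the common point would lie in $T\cap(T+\alpha_1)\cap(T+\alpha_2)\cap(T+\alpha_3)\cap(T+\alpha_3')$ with four pairwise distinct vectors in $\nS$, producing a vertex of $G_4(\nS)$ and contradicting Lemma~\ref{3-4-fold} (equivalently, Theorem~\ref{Main-1}~\eqref{Main1.empty}, already handled in Section~\ref{sec:easy}).

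For the second assertion, I would invoke \eqref{L-foldGra3}: the subdivision pieces of $\B_{\Bold{\alpha}}$ correspond bijectively to the outgoing edges $\Bold{\alpha}\xrightarrow{d}\Bold{\alpha}'$ of $G_2(\nS)$ via $f_d(\B_{\Bold{\alpha}'})$. The infinite walk in $G_3(\nS)$ associated with a $4$-fold point projects, by forgetting the distinguished coordinate $\alpha_3$, to an infinite walk in $G_2(\nS)$, and its first edge identifies the subdivision piece containing the point. The task thus reduces to checking that whenever $\Bold{\alpha}$ has more than one outgoing $G_2$-edge—equivalently, $\Bold{\alpha}$ (up to symmetry) is one of the nine vertex types listed with three outgoing edges in Table~\ref{tab:triple-graph}—the two $G_3$-extensions of $\Bold{\alpha}$ project to distinct first edges of $G_2(\nS)$.

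The main (and only real) obstacle is this last verification, which I would carry out by direct case analysis: for each such $\Bold{\alpha}$, locate the two extensions in Figure~\ref{quad-graph}, read off their unique first-edge labels, and use Table~\ref{table-contact} to compute the resulting $G_2$-edge and its target. For instance, for $\Bold{\alpha}=\{Q-P,N-P\}$, the two extensions $\{\overline{P},Q-P,N-P\}$ and $\{Q-P,N-P,N\}$ have first-edge labels $A-1$ and $0$, and project respectively to $\{Q-P,N-P\}\xrightarrow{A-1}\{\overline{Q},N-Q\}$ and $\{Q-P,N-P\}\xrightarrow{0}\{\overline{Q},N-Q\}$, which are distinct $G_2$-edges and hence index different subdivision pieces. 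The symmetry $\Bold{\alpha}\mapsto-\Bold{\alpha}$ of the graphs cuts the bookkeeping roughly in half.
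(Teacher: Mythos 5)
Your proposal is correct and follows essentially the same route as the paper: both identify the two $4$-fold points with the (exactly two, by inspection of Figure~\ref{quad-graph}) vertices of $G_3(\nS)$ containing $\Bold{\alpha}$, each a singleton via the unique infinite walk, and both prove the second assertion by observing that the unique outgoing edges of these two $G_3$-vertices carry different labels and therefore select different subtiles of the first subdivision. Your extra touches (the $72=2\cdot 36$ count, the explicit appeal to the absence of $5$-fold intersections for distinctness of the two points) only make explicit what the paper leaves to inspection.
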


\begin{proof}
First note that for each $\Bold{\alpha} \in G_2(\nS)$ there are exactly two elements  $\Bold{\beta} \in G_3(\nS)$ with $\Bold{\alpha} \subset \Bold{\beta}$. Because $\B_{\Bold{\beta}}$ is a single point for each $\Bold{\beta} \in G_3(\nS)$ by Theorem~\ref{Main-1}~\eqref{Main1.point}, this proves the first assertion.

Let $\Bold{\beta}_1,\Bold{\beta}_2\in G_3(\nS)$ be given with $\Bold{\alpha} \subset \Bold{\beta}_i$ for $i\in\{1,2\}$. Then the edge leading away from $\Bold{\beta}_1$ in $G_3(\nS)$ has a different labeling than the edge leading away from $\Bold{\beta}_2$ in $G_3(\nS)$. Since there are no $5$-fold intersections this means that the points $\B_{\Bold{\beta}_1}$ and $\B_{\Bold{\beta}_2}$ are located in two different subtiles of $\B_{\Bold{\alpha}}$ and the second assertion is proved as well.
\end{proof}

\subsection{Preparatory results on $3$-fold intersections}\label{sec:prep3}
In this subsection we show that each non\-empty $3$-fold intersection as well as each set
\begin{equation}\label{Cgamma}
L_\alpha=\bigcup_{
\begin{subarray}{c}
\beta \in \nS
\\
\{\alpha,\beta\}\in G_2(\nS)
\end{subarray}
} \B_{\alpha,\beta} \qquad(\alpha\in \nS)
\end{equation}
is a Peano continuum. (We note already here that we will prove in Lemma~\ref{lem:bdov2} that $L_\alpha=\partial_{\partial T} \B_{\alpha}$.) Moreover, we provide some combinatorial results on the subdivision structure of $L_\alpha$. All this will be needed in order to prove Theorem~\ref{Main-1}~\eqref{Main1.loop}.

We start with a definition.

\begin{definition}[{{\em cf.\ e.g.}~\cite[Definition~6.6]{SiegelThuswaldner10}}]\label{def:chains}
Let $\mathcal{K}=\{X_1,\ldots, X_\nu\}\subset \R^m$ be a finite collection of sets.
\begin{itemize}
\item The collection $\mathcal{K}$ forms a \emph{regular chain} if $|X_i\cap X_{i+1}| =1$ for each $i\in\{1,\ldots,\nu-1\}$ and $X_i \cap X_j =\emptyset$ if $|i-j| \ge 2$. 
\item The collection $\mathcal{K}$ forms a \emph{circular chain} if $|X_i\cap X_{i+1}| =1$ for each $i\in\{1,\ldots,\nu-1\}$, $|X_1 \cap X_{\nu}|=1$, and $X_i \cap X_j =\emptyset$ if $2\le |i-j|\le \nu-2$.
\item The {\em Hata graph} of $\mathcal{K}$ is an undirected graph. Its vertices are the elements of $\mathcal{K}$ and there is an edge between $X_i$ and $X_j$ if and only if $i\not=j$ and $X_i \cap X_j \not=\emptyset$.
\end{itemize}
\end{definition}

We need the following result on connectedness of the attractor of a graph-directed iterated function system in the sense of Mauldin and Williams~\cite{MauldinWilliams88}.

\begin{lemma}[{{\em cf.}~\cite[Theorem~4.1]{LuoAkiyamaThuswaldner04}}]\label{lem:LAT}
Let $\{S_1, \ldots, S_q\}$ be the attractor of a graph-directed iterated function system with (directed) graph $G$ with set of vertices $\{1,\ldots,q\}$, set of edges $E$, and contractions $F_e$ ($e\in E$) as edge labels, {\em i.e.}, the nonempty compact sets $S_1,\ldots,S_q$ are uniquely defined by
\[
S_i = \bigcup_{i \xrightarrow{e} j}F_e(S_j) \qquad i\in\{1,\ldots,q\},
\]
where the union is taken over all edges in $G$ starting from $i$.
Then $S_i$ is a Peano continuum or~a single point for each 
$i\in\{1,\ldots,q\}$ if and only if for each 
$i\in\{1,\ldots,q\}$ the \emph{successor collection} 
$$
\big\{ F_e(S_j); \; i \xrightarrow{e} j \hbox{ is an edge in $G$ starting from $i$}
\big\}
$$
 of~$i$ has a connected Hata graph.
\end{lemma}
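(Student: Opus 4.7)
The plan is to prove this in the spirit of Hata's classical connectedness criterion for IFS attractors~\cite{Hata85}, adapted to the graph-directed setting. Throughout, let $F_e$ denote the contraction labeling edge $e$ of $G$.

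For the necessity direction, I would argue by contrapositive. Suppose some vertex $i$ has a disconnected Hata graph in its successor collection. Then the edges leaving $i$ partition into two nonempty families $E_1, E_2$ with $F_{e_1}(S_{j_1}) \cap F_{e_2}(S_{j_2}) = \emptyset$ whenever $e_k \in E_k$. The two sets $\bigcup_{e \in E_1} F_e(S_{j})$ and $\bigcup_{e \in E_2} F_e(S_{j})$ are therefore nonempty, compact, disjoint, and their union is $S_i$. This contradicts the connectedness of $S_i$, which holds since a Peano continuum (or single point) is connected.

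For sufficiency I would proceed in two steps. First, I would establish connectedness of each $S_i$. Iterating the defining equation $k$ times yields
\[
S_i = \bigcup_{w} F_w(S_{j_k}),
\]
where $w$ ranges over walks $i \xrightarrow{e_1}\cdots\xrightarrow{e_k} j_k$ in $G$ and $F_w = F_{e_1}\circ\cdots\circ F_{e_k}$. Applying the Hata-graph hypothesis inductively at each vertex traversed by such a walk, the collection $\{F_w(S_{j_k})\}$ is \emph{chain-connected}: any two of its members are linked by a sequence of successively intersecting members. Combined with the fact that the maximal diameter in this level-$k$ decomposition tends to $0$ as $k\to \infty$, this forces $S_i$ to be connected.

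For local connectedness, given $\varepsilon > 0$, one picks $k$ so that every level-$k$ piece $F_w(S_{j_k})$ has diameter below $\varepsilon$. For $x, y \in S_i$ sufficiently close, the level-$k$ pieces containing them must lie in some larger ``cluster'' of nearby pieces which, by the chain-connectedness from the previous step, can be joined by a chain of pairwise intersecting level-$k$ pieces. The crucial quantitative point is that the chain length can be bounded uniformly in $k$, using finiteness of the vertex set of $G$ together with the Hata-graph hypothesis at \emph{every} vertex. Contraction of the $F_e$'s then controls the total diameter of the chain in terms of $\varepsilon$, yielding a small connected neighborhood of each point. With connectedness and compactness in hand, the Hahn--Mazurkiewicz theorem concludes that each $S_i$ is a Peano continuum (or a single point in the degenerate case).

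The main obstacle is this uniform bound on chain lengths in the local connectedness argument: without uniformity, nearby points might only be joinable by increasingly long, geometrically large chains as $k$ grows, and local connectedness would fail. It is precisely at this point that the assumption that \emph{every} vertex (not just the one under consideration) has a connected successor Hata graph is indispensable, and it forms the technical core of the proof given in~\cite[Theorem~4.1]{LuoAkiyamaThuswaldner04}, which we invoke rather than reproduce.
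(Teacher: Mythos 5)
The paper does not prove this lemma at all: it is imported verbatim from \cite[Theorem~4.1]{LuoAkiyamaThuswaldner04} and used as a black box, so there is no in-paper proof to compare against. Your necessity argument (a disconnected successor Hata graph splits $S_i$ into two nonempty disjoint compact pieces, contradicting connectedness) is complete and correct, and your sufficiency outline — chain-connectedness of the level-$k$ pieces plus a uniform bound on chain lengths to get local connectedness, then Hahn--Mazurkiewicz — is the strategy of the cited proof, with the genuinely hard uniformity step correctly identified and, exactly as the paper does, delegated to the citation rather than carried out.
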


Let $\ell \ge 1$ and assume that each edge label $d\in \D$ of $G_\ell(\nS)$ is interpreted as the contraction $f_d$. Then by the set equation \eqref{L-foldGra3} the graph $G_\ell(\nS)$ defines a graph-directed iterated function system with attractor $\{B_{\Bold{\alpha}};\, \Bold{\alpha} \in G_\ell(\nS)\}$. 
The following lemma gives first topological information on the set of $3$-fold intersections.

\begin{lemma}\label{lem:2lc}\mbox{}
\begin{enumerate}
\item \label{lem2lc1}
For each vertex $\Bold{\alpha}\in G_2(\nS)$, the set $\B_{\Bold{\alpha}}$ is a Peano continuum.
\item \label{lem2lc2}
For each $\alpha\in \nS$, the set $L_\alpha$ is a Peano continuum.
\end{enumerate}
\end{lemma}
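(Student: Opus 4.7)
The plan for assertion~\eqref{lem2lc1} is to apply the attractor connectedness criterion Lemma~\ref{lem:LAT} to the graph $G_2(\nS)$, interpreting each edge label $d\in\D$ as the contraction $f_d$. By the set equation \eqref{L-foldGra3} with $\ell=2$, the collection $\{\B_{\Bold{\alpha}};\,\Bold{\alpha}\in G_2(\nS)\}$ is precisely the attractor of this graph-directed iterated function system. Thus I need to verify, for every one of the $36$ vertices $\Bold{\alpha}\in G_2(\nS)$ listed in Table~\ref{tab:triple-graph}, that the successor collection $\{f_d(\B_{\Bold{\alpha}'});\, \Bold{\alpha}\xrightarrow{d}\Bold{\alpha}'\}$ has a connected Hata graph.

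To check the Hata edges I would use Lemma~\ref{lem:piecesresults}~\eqref{lem:piecesresults2} with $k=2$: any nonempty intersection of two successors $f_{d_1}(\B_{\Bold{\alpha}_1'})\cap f_{d_2}(\B_{\Bold{\alpha}_2'})$ has the form $M^{-1}(\B_{\Bold{\beta}}+c)$ for some $c\in\Z^3$ and some $\Bold{\beta}\in G_{\ell'}(\nS)$ with $\ell'>2$. Since $G_\ell(\nS)$ is empty for $\ell\ge 4$ by Lemma~\ref{3-4-fold}, the only possibility is $\ell'=3$, so the Hata edges are read off directly from $G_3(\nS)$ (Figure~\ref{quad-graph}). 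Invoking the symmetry $\Bold{\alpha}\mapsto -\Bold{\alpha}$ from Lemma~\ref{3-4-fold} cuts the number of cases in half. The $9$ vertex classes with only a single outgoing edge (the lower half of Table~\ref{tab:triple-graph}) are trivial since the successor collection reduces to the pieces of a single set $\B_{\Bold{\alpha}'}$ (which is handled by induction-on-level: the same graph argument applied to $G_2(\nS)$ again shows these pieces form a connected Hata graph once the $9$ vertex classes with three outgoing edges have been checked). For the $9$ vertex classes of the upper part, the label ranges in Table~\ref{tab:triple-graph} indicate how the successor pieces are arranged along the three ``outgoing directions''; consecutive labels within and across these outgoing edges produce precisely the overlaps encoded by the quadruple cycles of Figure~\ref{quad-graph}, and routine inspection verifies connectivity.

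For assertion~\eqref{lem2lc2}, fix $\alpha\in\nS$. By~\eqref{lem2lc1} each $\B_{\alpha,\beta}$ appearing in the union \eqref{Cgamma} is a Peano continuum, so $L_\alpha$ is a finite union of Peano continua. A standard argument (via the Hahn--Mazurkiewicz theorem: concatenate surjective paths $[0,1]\to \B_{\alpha,\beta}$ along a spanning tree of the Hata graph of $\{\B_{\alpha,\beta}\}_\beta$) shows that such a union is itself a Peano continuum whenever its Hata graph is connected. Now an edge between $\B_{\alpha,\beta_1}$ and $\B_{\alpha,\beta_2}$ exists iff $\B_{\alpha,\beta_1,\beta_2}\ne\emptyset$, which by Proposition~\ref{Char-boundary} means $\{\alpha,\beta_1,\beta_2\}\in G_3(\nS)$. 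Thus for each of the $14$ choices of $\alpha$ one only has to verify from Figure~\ref{quad-graph} that the auxiliary graph on $\{\beta;\,\{\alpha,\beta\}\in G_2(\nS)\}$, whose edges are the triples $\{\alpha,\beta_1,\beta_2\}\in G_3(\nS)$, is connected. Again the $\Bold{\alpha}\mapsto-\Bold{\alpha}$ symmetry halves the work.

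The genuine obstacle in both parts is the case analysis: while each individual verification is a direct combinatorial inspection of Table~\ref{tab:triple-graph} and Figure~\ref{quad-graph}, presenting all $18$ inequivalent cases of~\eqref{lem2lc1} and $7$ inequivalent cases of~\eqref{lem2lc2} in a transparent way is what requires care. I would organise the verification by first listing, for each vertex of $G_2(\nS)$, the elements of $G_3(\nS)$ that sit above it; this immediately produces the Hata graph and, by inspection, its connectedness in every case.
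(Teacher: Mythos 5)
Your proposal follows essentially the same route as the paper: both parts apply Lemma~\ref{lem:LAT} to the graph-directed system on $G_2(\nS)$, read off the Hata edges from $G_3(\nS)$ (using that $G_\ell(\nS)=\emptyset$ for $\ell\ge 4$), exploit the $\Bold{\alpha}\mapsto-\Bold{\alpha}$ symmetry, and deduce~(2) by writing $L_\alpha$ as a finite union of the Peano continua from~(1) with connected Hata graph. The only points worth noting are that the paper organises the ``routine inspection'' for the nontrivial vertices via the explicit regular chains $V_i$ of \eqref{eq:Vi} (showing each successor Hata graph is a path graph), and that it explicitly rules out the singleton alternative in Lemma~\ref{lem:LAT} by observing that every vertex of $G_2(\nS)$ starts infinitely many infinite walks --- a small step you should add, while your ``induction-on-level'' aside for the single-successor vertices is unnecessary, since their successor collection is a one-element set whose Hata graph is trivially connected.
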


\begin{proof}
To prove assertion~\eqref{lem2lc1}, we want to apply Lemma~\ref{lem:LAT} to $\{ \B_{\Bold{\alpha}};\,  \Bold{\alpha}\in G_2(\nS)\}$. Thus we have to show that the Hata graph $H(\Bold{\alpha})$ of the successor collection of each vertex $\Bold{\alpha} \in G_2(\nS)$ is connected. 
(Note that $\B_{\Bold{\alpha}}$ cannot be a singleton because each vertex of $G_2(\nS)$ is the starting point of infinitely many infinite walks.) For convenience, we multiply each element of these successor collections by $M$. This has no effect on $H(\Bold{\alpha})$. 

From Table~\ref{tab:triple-graph} we see that $G_2(\nS)$ has $36$ vertices. If $A\ge 2$, then $18$ of them have only one outgoing edge, if $A=1$ this is the case for $24$ vertices. For these ``trivial'' vertices the graph $H(\Bold{\alpha})$ is a single vertex and, hence, it is connected. Thus we have to deal with the remaining ``nontrivial'' vertices of $G_2(\nS)$ ($18$ for $A\ge 2$ and $12$ for $A=1$).

Let $X_1,X_2$ be two elements of a (multiplied by $M$) successor collection of a ``nontrivial'' vertex $\Bold{\alpha}\in G_2(\nS)$. Then there are $a_1,a_2\in \D$ and $\Bold{\beta}_1,\Bold{\beta}_2\in G_2(\nS)$ such that $X_i= \B_{\Bold{\beta}_i}+a_i$ for $i\in\{1,2\}$. To check if there is an edge in $H(\Bold{\alpha})$ connecting $X_1$ and $X_2$, we note that by the definition of $G_3(\nS)$ and the fact that $G_\ell(\nS) = \emptyset$ for $\ell \ge 4$ we have 
\begin{equation}\label{eq:conneqiv}
\begin{split}
X_1\cap X_2 \not=\emptyset 
&\quad\Longleftrightarrow\quad
\B_{\Bold{\beta}_1} \cap (\B_{\Bold{\beta}_2} + a_2 -a_1) \not= \emptyset  \\
&\quad\Longleftrightarrow\quad  
(\Bold{\beta}_1 \cup (\Bold{\beta}_2 + a_2 -a_1) \cup \{a_2-a_1\}) \setminus \{0\} \in G_3(\nS).
\end{split}
\end{equation}
Thus the graph $H(\Bold{\alpha})$ can be set up by checking the graphs $G_2(\nS)$ and $G_3(\nS)$.

It turns out that the Hata graphs $H(\Bold{\alpha})$ for the nontrivial vertices of $\Bold{\alpha}\in G_2(\nS)$ all have the same structure. Indeed, let (recalling that $P=(1, 0, 0)^t, Q=(A, 1, 0)^t, N=(B, A, 1)^t$)
\begin{equation}\label{eq:3igbo}
\begin{array}{rclrclrcl}
{\Bold{\zeta}_1}&=&\{\overline{Q},{N-Q} \}, &
\Bold{\eta}_1&=&\{\overline{Q-P},{N-Q}\}, &
\Bold{\vartheta}_1&=&\{\overline{Q-P},{N-Q+P}\}, \\
\Bold{\zeta}_2&=&\{{Q-P},{N-P}\},& 
\Bold{\eta}_2&=&\{{Q-P},{N}\},&
\Bold{\vartheta}_2&=&\{{Q},{N}\},\\
\Bold{\zeta}_3&=&\{\overline{N},\overline{N-Q+P}\}, &
\Bold{\eta}_3&=&\{\overline{N},\overline{N-Q}\}, &
\Bold{\vartheta}_3&=&\{\overline{N-P},\overline{N-Q}\}
\end{array}
\end{equation}
be vertices of $G_2(\nS)$ and set
\begin{equation}\label{eq:Vi}
V_i=\left\{
{\Bold{\zeta}_i} + d,{\Bold{\eta}_i} + d,{\Bold{\vartheta}_i} +d;\; d\in \D
\right\} 
\qquad(1\le i\le 3).
\end{equation}
Then using \eqref{eq:conneqiv} and inspecting the graph $G_3(\nS)$ we gain that 
\[
(\B_{\Bold{\zeta}_i} + d) \cap  (\B_{\Bold{\eta_i}} + d),\;
(\B_{\Bold{\eta}_i}+d)\cap (\B_{\Bold{\vartheta_i}}+ d), \;
(\B_{\Bold{\vartheta}_i}+d)\cap \left(\B_{\Bold{\zeta}_i}+(d+P)\right) 
\]
contain a single element for $(1\le i\le 3, \; d\in \D)$ and all the other intersections of the form  $\B_{\Bold{\gamma}} \cap \B_{\Bold{\gamma'}}$ with $\Bold{\gamma},\Bold{\gamma'} \in V_i$ are empty. Thus we conclude that $V_i$ is a regular chain whose Hata graph is the {\it path graph} depicted in Figure~\ref{fig:chain}.
\begin{figure}[h]
\xymatrix{*+[F-]{\st{$B_{\Bold{\vartheta}_i}$} }\ar@{-}[d]\ar@{-}[ddr]& *+[F-]{\st{$B_{\Bold{\vartheta}_i}+P$} }\ar@{-}[d]\ar@{-}[ddr]&*+[F-]{\st{$B_{\Bold{\vartheta}_i}+2P$} }\ar@{-}[d]\ar@{-}[ddrr]&&\dots && *+[F-]{\st{$B_{\Bold{\vartheta}_i}+(C-1)P$} }\ar@{-}[d]\\
*+[F-]{\st{$B_{\Bold{\eta}_i}$} }\ar@{-}[d]& *+[F-]{\st{$B_{\Bold{\eta}_i}+P$} }\ar@{-}[d]&*+[F-]{\st{$B_{\Bold{\eta}_i}+2P$} }\ar@{-}[d]&&\dots && *+[F-]{\st{$B_{\Bold{\eta}_i}+(C-1)P$} }\ar@{-}[d]\\
*+[F-]{\st{$B_{\Bold{\zeta}_i}$} }& *+[F-]{\st{$B_{\Bold{\zeta}_i}+P$} }&*+[F-]{\st{$B_{\Bold{\zeta}_i}+2P$} }&&\dots && *+[F-]{\st{$B_{\Bold{\zeta}_i}+(C-1)P$} }\ar@{-}[uull]\\
}
\caption{The Hata graph of the regular chain $V_i$ ($1\le i\le 3$).\label{fig:chain}}
\end{figure}
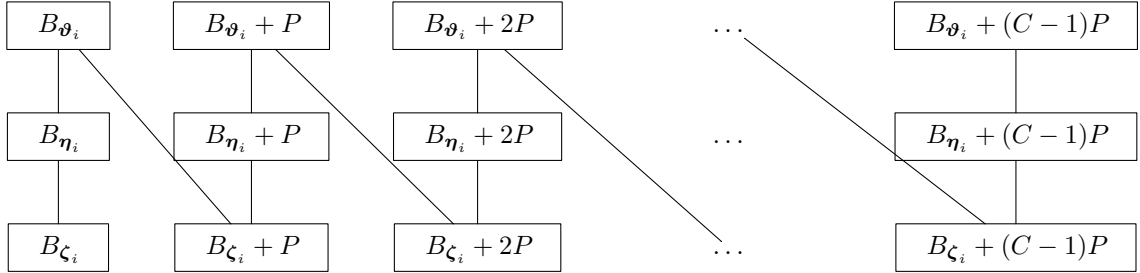

We can read off from the graph $G_2(\nS)$ in Table~\ref{tab:triple-graph} that each nontrivial vertex $\Bold{\alpha}$ has a Hata graph $H(\Bold{\alpha})$ which is a path graph that is a subgraph of the Hata graph of $V_i$ for some $i\in\{1,2,3\}$. Thus $H(\Bold{\alpha})$ is a connected graph and the proof of \eqref{lem2lc1} is finished.

To prove assertion~\eqref{lem2lc2}, it suffices to show that it holds for ${\alpha}\in\{P,~Q,~N,~Q-P,~N-Q,~N-P,~N-Q+P\}$ by the symmetry mentioned in Lemma~\ref{3-4-fold}.  From the definition of $L_{{\alpha}}$ we get
 
\begin{equation}\label{eq:circchainLgamma}
\begin{split}
L_{P}&=  \B_{\mybinom{\overline{Q-P}}{P}}  \cup \B_{\mybinom{P}{N-Q+P}}\cup \B_{\mybinom{P}{N}} \cup \B_{\mybinom{P} {Q}}\cup \B_{\mybinom{\overline{N-Q}}{P} }\cup \B_{\mybinom{\overline{N-P}}{P} },\\
L_{Q} &=  \B_{\mybinom{\overline{N-Q}}{Q} } \cup \B_{\mybinom{Q-P}{Q} } \cup \B_{\mybinom{Q}{N} } \cup \B_{\mybinom{P}{Q} },\\
L_{N} &=  \B_{\mybinom{N-P}{N}}\cup \B_{\mybinom{N-Q}{N}}\cup \B_{\mybinom{N-Q+P}{N} }\cup \B_{\mybinom{P}{N} }\cup \B_{\mybinom{Q}{N} }\cup \B_{\mybinom{Q-P}{N} },\\
L_{Q-P} &= \B_{\mybinom{\overline{N-Q+P}}{Q-P} } \cup \B_{\mybinom{\overline{N-Q}}{Q-P}}\cup \B_{\mybinom{Q-P}{Q} }\cup \B_{\mybinom{Q-P}{N} }\cup \B_{\mybinom{Q-P}{N-P} }\cup \B_{\mybinom{\overline{P}}{Q-P} },\\
L_{{N-Q}} &= \B_{\mybinom{\overline{Q-P}}{N-Q} } \cup \B_{\mybinom{\overline{Q}}{{N-Q}}}\cup \B_{\mybinom{\overline{P}}{{N-Q}} }\cup \B_{\mybinom{N-Q}{{N-P}} }\cup \B_{\mybinom{N-Q}{N} }\cup \B_{\mybinom{N-Q}{N-Q+P} },\\
L_{{N-P}} &= \B_{\mybinom{Q-P}{N-P} } \cup \B_{\mybinom{\overline{P}}{N-P}}\cup \B_{\mybinom{{N-Q}}{N-P} }\cup \B_{\mybinom{N-P}{N} },\\
L_{N-Q+P} &= \B_{\mybinom{P}{N-Q+P} } \cup \B_{\mybinom{N-Q+P}{N} }\cup \B_{\mybinom{N-Q}{N-Q+P}}\cup \B_{\mybinom{\overline{Q-P}}{N-Q+P} }.\\
\end{split}
\end{equation}
Each union on the right hand side is ordered in a way that consecutive sets have nonempty intersection (indeed, by using \eqref{eq:conneqiv} and the graph $G_3(\nS)$ we see that the collection of the elements of each union even forms a circular chain). Thus each of the sets $L_\alpha$ in \eqref{eq:circchainLgamma} is a connected union of finitely many Peano continua and, hence, a Peano continuum.
\end{proof}

Next we prove a combinatorial result. The collection $\mathcal{L}_{\alpha,k}$ defined in the following lemma is the set of pieces of the $(k-1)$-th subdivision of the set $L_\alpha$. Thus this result already hints at the fact that $L_\alpha$ is a simple closed curve. Recall that for $\Bold{\alpha} \in \times_{j=1}^{\ell}G(\nS)$  the collection $\mathcal{C}_k(\boldsymbol{\alpha})$ is defined in \eqref{eq:CkAlpha}.

\begin{lemma}\label{lem:chain}
For each $\alpha\in \nS$ the collection 
\begin{equation}\label{eq:Pkclosed}
\mathcal{L}_{\alpha,k} = \bigcup_{\beta: \, \{\alpha,\beta\}\in G_2(\nS)} \mathcal{C}_k(\{\alpha,\beta\})
\end{equation}
forms a circular chain for each $k\ge 1$ (if the sets in this collection are ordered properly).
\end{lemma}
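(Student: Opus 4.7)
The proof is by induction on $k$. For $k=1$, \eqref{eq:circchainLgamma} already exhibits $\mathcal{L}_{\alpha,1}$ in a cyclic arrangement for each representative $\alpha$, the general case following from the symmetry in Lemma~\ref{3-4-fold}. Consecutive pairs $\B_{\alpha,\beta_1}, \B_{\alpha,\beta_2}$ intersect in the single point $\B_{\alpha,\beta_1,\beta_2}$ corresponding to a vertex of $G_3(\nS)$ (cf.\ Figure~\ref{quad-graph} and Theorem~\ref{Main-1}\eqref{Main1.point}), whereas non-consecutive pairs are disjoint since the corresponding triple is not a vertex of $G_3(\nS)$ and $5$-fold intersections are empty by Lemma~\ref{3-4-fold}.

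For the inductive step, assume $\mathcal{L}_{\alpha,k}=(X_1,\ldots,X_n)$ is a circular chain with $X_i\cap X_{i+1}=\{p_i\}$ (indices modulo $n$). Write each $X_i$ in the form $f_{d_1\cdots d_{k-1}}(\B_{\Bold{\alpha}_i})$ with $\Bold{\alpha}_i\in G_2(\nS)$; by \eqref{L-foldGra3} its one-step refinement is obtained by applying $f_{d_1\cdots d_{k-1}}$ to the members of $\mathcal{C}_2(\Bold{\alpha}_i)$, yielding subtiles $(Y_{i,1},\ldots,Y_{i,m_i})$. The proof of Lemma~\ref{lem:2lc}\eqref{lem2lc1} identifies $\mathcal{C}_2(\Bold{\alpha}_i)$ with a sub-path of one of the path graphs underlying $V_1,V_2,V_3$ in Figure~\ref{fig:chain}, so $(Y_{i,1},\ldots,Y_{i,m_i})$ is itself a regular chain (trivial when $m_i=1$). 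The plan is to orient each subchain so that its concatenation around the cycle produces a circular chain. Once one shows that each gluing point $p_i$ belongs to a unique subtile of $X_i$ (resp.\ $X_{i+1}$) and that these subtiles are the extreme members of the respective subchains, the chain conditions for $\mathcal{L}_{\alpha,k+1}$ follow routinely: within a single $X_i$, non-consecutive subtiles are disjoint by the regular chain property of $\mathcal{C}_2(\Bold{\alpha}_i)$; between adjacent parents $X_i, X_{i+1}$ any subtile intersection is contained in $\{p_i\}\subset Y_{i,m_i}\cap Y_{i+1,1}$; and between parents with $|i-j|\ge 2\bmod n$ everything is disjoint since $X_i\cap X_j=\emptyset$.

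The main obstacle is precisely the location claim for the $p_i$. At $k=1$ this is immediate from Lemma~\ref{lem:2fourfold}: the two $4$-fold intersection points of $\B_{\alpha,\beta_i}$ are exactly $p_{i-1}$ and $p_i$, which by the lemma lie in two distinct subtiles of the subdivision, and a direct inspection of the regular chains $V_j$ in Figure~\ref{fig:chain} forces these subtiles to be the outermost ones. For $k\ge 2$ the points $p_i$ are no longer $4$-fold intersections of $T+\Z^3$ in general, but by Lemma~\ref{lem:piecesresults}\eqref{lem:piecesresults2} each one is the $f_{d_1\cdots d_{k-1}}$-image of a single point coming from a vertex of $G_3(\nS)$; propagating the extreme-endpoint property through the induction amounts to a case-by-case check of how sub-paths of $V_j$ nest as the subdivision progresses, and this is where the bulk of the technical work lies.
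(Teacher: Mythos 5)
Your outline follows the same strategy as the paper's proof (induction on $k$, subdivision of each link of the circular chain into a regular chain, concatenation of the sub-chains, and Lemma~\ref{lem:2fourfold} to force the two gluing points of a link into different subtiles), but it stops exactly where the actual work begins. The statement you defer --- that each gluing point $p_i$ lies in a \emph{unique} subtile of $X_i$ and of $X_{i+1}$, and that these subtiles are the \emph{extreme} members of the respective sub-chains --- is not a routine consequence of Lemma~\ref{lem:2fourfold} or of a ``direct inspection'' of Figure~\ref{fig:chain}: Lemma~\ref{lem:2fourfold} only gives that the two $4$-fold points sit in two \emph{different} subtiles, not that they sit in the outermost ones, and nothing abstract rules out that $p_i$ coincides with a junction point of two consecutive subtiles of $X_i$ (which would create a triangle in the Hata graph and destroy the circular-chain property). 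In the paper this verification is precisely the content of the case analysis (i)--(iii) together with Table~\ref{tab:iner}, i.e.\ an explicit check, for every vertex $\{\alpha_1,\alpha_2,\alpha_3\}\in G_3(\nS)$ and every pair of its $2$-element subsets, of which end of the subdivision path of $\B_{\alpha_1,\alpha_2}$ meets the subdivision of $\B_{\alpha_1,\alpha_3}$. You acknowledge that ``this is where the bulk of the technical work lies,'' which is an accurate self-assessment: without that check the proof is an outline, not a proof.

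You also miss (or at least do not articulate) the structural observation that makes the check finite and uniform in $k$: after multiplying by $M^{k-1}$ and translating, every edge of the Hata graph of $\mathcal{L}_{\alpha,k}$ is brought to the normal form $\B_{\alpha_1,\alpha_2}\,\mbox{---}\,\B_{\alpha_1,\alpha_3}$ with $\{\alpha_1,\alpha_2,\alpha_3\}\in G_3(\nS)$, so only the finitely many configurations listed in Table~\ref{tab:iner} ever occur, independently of $k$. Your phrasing about tracking ``how sub-paths of $V_j$ nest as the subdivision progresses'' suggests an open-ended, level-dependent verification, which would not terminate as an argument; the rescaling reduction is what closes the induction. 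To complete your proof you would need to state this reduction explicitly and then carry out the finite case check (or cite Table~\ref{tab:iner}).
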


\begin{proof}
Let $\alpha\in \nS$ be arbitrary but fixed throughout this proof. Let $H(\mathcal{L}_{\alpha,k})$ be the Hata graph of $\mathcal{L}_{\alpha,k}$. Using induction on $k$ we will prove first that $H(\mathcal{L}_{\alpha,k})$ consists of a single cycle. In the proof of Lemma~\ref{lem:2lc}~\eqref{lem2lc2} we showed that this is true for $k=1$. To perform the induction step we assume $H(\mathcal{L}_{\alpha,k})$ consists of a single cycle for some $k\in\N$. To prove that the same holds for $H(\mathcal{L}_{\alpha,k+1})$, we examine each edge of $H(\mathcal{L}_{\alpha,k})$ carefully. Let 
\begin{equation}\label{eq:genericEdge}  
X_1\mbox{---} X_2
\end{equation} 
be an arbitrary edge in $H(\mathcal{L}_{\alpha,k})$. This edge represents two sets $X_i = M^{-k+1}(\B_{\Bold{\beta}_i} + a_i)$ with  $a_1,a_2\in \D$ and $\Bold{\beta}_1,\Bold{\beta}_2\in G_2(\nS)$ that have nonempty intersection. When we pass from $H(\mathcal{L}_{\alpha,k})$ to $H(\mathcal{L}_{\alpha,k+1})$ each vertex $X_i$ is replaced by a path $\bullet \mbox{---} \cdots \mbox{---} \bullet$ (possibly degenerated to a single vertex) whose vertices are the elements of the subdivision of $X_i$. Indeed, from the proof of Lemma~\ref{lem:2lc}~\eqref{lem2lc1} we know that $X_i$ is subdivided according to the graph $G_2(\nS)$ into a finite collection of sets that forms a regular chain.
Thus passing from $H(\mathcal{L}_{\alpha,k})$ to $H(\mathcal{L}_{\alpha,k+1})$ the edge \eqref{eq:genericEdge} is transformed to a subgraph consisting of two disjoint finite path graphs that are connected by at least one edge.

{\it We claim that this subgraph is itself a path graph.} If we multiply each vertex of $H(\mathcal{L}_{\alpha,k})$ by $M^{k-1}$ and shift it by an appropriate vector in $\Z^3$ then the structure of the Hata graph as well as the way a given vertex subdivides into its subtiles is not changed. Thus we may assume w.l.o.g.\ that the edge \eqref{eq:genericEdge} has the form
\begin{equation}\label{eq:genericEdge2}
\B_{\alpha_1,\alpha_2} \mbox{---} \B_{\alpha_1,\alpha_3}
\end{equation} 
with $\{\alpha_1,\alpha_2,\alpha_3\} \in G_3(\nS)$. To prove the claim, for each $\Bold{\alpha} \in G_3(\nS)$ and each distinct $\Bold{\beta}_1,\Bold{\beta}_2 \subset \Bold{\alpha}$, we have to show that the subdivision of $\B_{\Bold{\beta}_1} \cup \B_{\Bold{\beta}_2}$ has a Hata graph which is a path graph. We will denote this Hata graph by $H(\Bold{\beta}_1,\Bold{\beta}_2)$. (Note that $\Bold{\beta}_1,\Bold{\beta}_2$ are always vertices of $G_2(\nS)$.)

Inspecting the graphs $G_2(\nS)$ and $G_3(\nS)$ we see that we have the following three cases to distinguish.
\begin{enumerate}
\item[(i)] Both, $\Bold{\beta}_1$ and $\Bold{\beta}_2$ have only one outgoing edge in $G_2(\nS)$.
\item[(ii)] Exactly one of the two vertices, $\Bold{\beta}_1$ and $\Bold{\beta}_2$ have only one outgoing edge in $G_2(\nS)$.
\item[(iii)] Both, $\Bold{\beta}_1$ and $\Bold{\beta}_2$ have more than one outgoing edge in $G_2(\nS)$.
\end{enumerate}

We show that $H(\Bold{\beta}_1,\Bold{\beta}_2)$ is a path graph for each of these cases separately.

Case (i) is trivial because the subdivision of both,  $\B_{\Bold{\beta}_1}$ and $\B_{\Bold{\beta}_2}$, consists  of only one element. Thus $H(\Bold{\beta}_1,\Bold{\beta}_2)$ is of the form $\bullet \mbox{---} \bullet$ and we are done.

Case (ii): 
Assume w.l.o.g.\ that $\Bold{\beta}_1$ has more than one outgoing edge in $G_2(\nS)$ (we call it the nontrivial vertex). Then $\Bold{\beta}_2$ has only one outgoing edge in $G_2(\nS)$ (we call it the trivial vertex). We know from the proof of Lemma~\ref{lem:2lc}~\eqref{lem2lc1} that the subdivision of $\B_{\Bold{\beta}_1}$ has a Hata graph $H(\Bold{\beta}_1)$ which is a path graph $Y_1 \mbox{ --- } \cdots \mbox { --- } Y_r$ for some $r\ge 2$. The Hata graph $H(\Bold{\beta}_2)$ is a single vertex $Z$ by assumption. We have to show  that the Hata graph $H(\Bold{\beta}_1,\Bold{\beta}_2)$ of the subdivision of $\B_{\Bold{\beta}_1}\cup\B_{\Bold{\beta}_2}$ is a path graph. 
We know that $H(\Bold{\beta}_1,\Bold{\beta}_2)$ consists of the path $H(\Bold{\beta}_1)$ and the vertex $H(\Bold{\beta}_2)$ together with some edges connecting these two subgraphs. Thus we have to prove that the only connection between these two subgraphs is a single edge of the form $Y_{i_0} \mbox{ --- } Z$ for $i_0\in \{1,r\}$.
To do this, we have to show that $Y_j \cap Z = \emptyset$ for $j\not=i_0$ and $Y_{i_0} \cap Z \not= \emptyset$. Since all occurring vertices are triple intersections these intersections are nonempty if and only if they correspond to vertices of $G_3(\nS)$.

We illustrate this for an example. Assume that $A\ge 2$ and let $\Bold{\beta}_1=\{Q-P,~N-P\}$ and $\Bold{\beta}_2=\{Q-P,~N\}$. Then $H({\Bold{\beta}_1})$ (multiplied by $M$) is the subpath of the graph in Figure~\ref{fig:chain} for the choice $i=1$ given by 
$
Y_1=\{\overline{Q},~N-Q\}  \mbox{ --- } \cdots  \mbox{ --- } \{\overline{Q},~N-Q + (A-1)P\}=Y_r.
$
The graph $H({\Bold{\beta}_2})$ is the vertex  $\{\overline{P},~N-Q\}$. Since 
$
B_{\{\overline{Q},~N-Q\}} \cap B_{\{\overline{P},~N-Q\}} = B_{\{\overline{P},\overline{Q},~N-Q\}} 
$
with $\{\overline{P},\overline{Q},~N-Q\} \in G_3(\nS)$, we see that $Y_1 \cap Z \not= \emptyset$ in this case. All the other intersections are easily seen to be not in $G_3(\nS)$; most of them would even correspond to $5$-fold intersections which do not exist.

The calculation we have done corresponds to the first line of Table~\ref{tab:iner}. Each line in this table corresponds to a possible constellation. In the fifth column of this table we indicate if the single vertex $H(\Bold{\beta}_2)$ has nonempty intersection with the first\footnote{Since the path graph $H(\Bold{\beta}_1)$ is undirected, we are free which end of the path we regard as ``first'' and ``last'' vertex. The choice which one is the first and which one is the last is indicated in the second column of Table~\ref{tab:iner}.} vertex $Y_1$ or the last vertex $Y_r$ of $H(\Bold{\beta}_1)$. 
All this can easily be verified by checking the intersections occurring in $G_3(\nS)$ as we did above.

\begin{table}[h]
{\scr 
\begin{tabular}{|c|c|c|c|c|c|}\hline 
\multicolumn{1}{|c|}{{$\begin{matrix}
\text{Nontrivial }\\
\text{vertex}\\
\end{matrix}$}} & \multicolumn{1}{|c|}{{ $\begin{matrix}
\text{First and Last }\\
\text{vertex of its}\\
\text{ subdivision}\end{matrix}$ }} & {$\begin{matrix}
\text{Trivial}\\
\text{vertex}\\
\end{matrix}$} & Its subdivision & $\begin{matrix}
\text{First/ }\\
\text{Last}\\
\end{matrix}$ & \multicolumn{1}{|c|}{{Condition}}\\ \hline \vspace{-2mm}&&&&&\\
{\multirow{3}{*}{$\mybinom{Q-P}{N-P}$}} &{\multirow{3}{*}{$\begin{matrix}
{\mybinom{\overline{Q}}{N-Q}},\\
{\mybinom{\overline{Q}}{N-Q}}+x_1\\
\end{matrix}$}} &$\mybinom{Q-P}{N}$&$\mybinom{\overline{P}}{{N-Q}}$&first& {\multirow{3}{*}{$A\geq 2$}}\\\cline{3-3}\cline{4-3}\cline{5-3}
\vspace{-2mm}&&&&&\\
&& $\mybinom{\overline{P}}{N-P}$&$\mybinom{\overline{Q}}{\overline{Q-P}}+x_1$&last& \\\cline{3-3}\cline{4-3}\cline{5-3}
\vspace{-2mm}&&&&&\\ 
&& $\mybinom{N-P}{N}$&$\mybinom{\overline{Q}}{\overline{P}}$&first&\\
\hline \vspace{-2mm}&&&&&\\
{\multirow{2}{*}{$\mybinom{\overline{P}}{Q-P}$}} &{\multirow{2}{*}{$\begin{matrix}
{\mybinom{\overline{Q-P}}{N-Q}}+x_1,\\
{\mybinom{\overline{Q-P}}{N-Q}}+x_2\\
\end{matrix}$}} &$\mybinom{\overline{N-Q+P}}{Q-P}$&$\mybinom{{N-Q}}{{N-Q+P}}+x_2$&last& {\multirow{2}{*}{-}}\\\cline{3-2}\cline{4-2}\cline{5-2}\vspace{-2mm}&&&&&\\ 
&& $\mybinom{\overline{P}}{{N-P}}$&$\mybinom{\overline{Q}}{\overline{Q-P}}+x_1$&first&\\
\hline
\vspace{-2mm}&&&&&\\
{\multirow{3}{*}{$\mybinom{\overline{N-Q+P}}{\overline{P}}$}} &{\multirow{3}{*}{$\begin{matrix}
{\mybinom{\overline{Q-P}}{N-Q+P}}+x_2,\\
{\mybinom{\overline{Q-P}}{N-Q+P}}+x_7\\
\end{matrix}$}} &${\mybinom{\overline{P}}{\overline{N}}}$&$\mybinom{\overline{Q-P}}{P}+x_7$&last& {\multirow{3}{*}{-}}\\\cline{3-3}\cline{4-3}\cline{5-3}
\vspace{-2mm}&&&&&\\
&& $\mybinom{\overline{N}}{\overline{N-Q+P}}$&$\mybinom{P}{N-Q+P}+x_7$&last& \\\cline{3-3}\cline{4-3}\cline{5-3}
\vspace{-2mm}&&&&&\\ 
&& $\mybinom{\overline{N-Q+P}}{Q-P}$&$\mybinom{{N-Q}}{{N-Q+P}}+x_2$&first&\\
\hline
\vspace{-2mm}&&&&&\\
{\multirow{3}{*}{$\mybinom{{P}}{{Q}}$}} &{\multirow{3}{*}{$\begin{matrix}
{\mybinom{{Q-P}}{N-P}},\\
{\mybinom{{Q-P}}{N-P}}+x_3\\
\end{matrix}$}} &${\mybinom{Q}{N}}$&$\mybinom{\overline{P}}{N-P}$&first& {\multirow{3}{*}{-}}\\\cline{3-3}\cline{4-3}\cline{5-3}
\vspace{-2mm}&&&&&\\
&& $\mybinom{Q}{\overline{N-Q}}$&$\mybinom{N-P}{N}+x_3$&last& \\\cline{3-3}\cline{4-3}\cline{5-3}
\vspace{-2mm}&&&&&\\ 
&& $\mybinom{P}{N}$&$\mybinom{\overline{P}}{{Q-P}}$&first&\\
\hline
\vspace{-2mm}&&&&&\\
{\multirow{2}{*}{$\mybinom{\overline{N-Q}}{{P}}$}} &{\multirow{2}{*}{$\begin{matrix}
{\mybinom{{Q-P}}{N}}+x_3,\\
{\mybinom{{Q-P}}{N}}+x_4\\
\end{matrix}$}} &${\mybinom{\overline{N-Q}}{Q}}$&$\mybinom{{N}}{N-P}+x_3$&first& {\multirow{2}{*}{-}}\\\cline{3-3}\cline{4-3}\cline{5-3}
\vspace{-2mm}&&&&&\\
&& $\mybinom{P}{\overline{N-P}}$&$\mybinom{Q-P}{Q}+x_4$&last&\\
\hline
\vspace{-2mm}&&&&&\\
{\multirow{3}{*}{$\mybinom{\overline{N-Q}}{\overline{N-P}}$}} &{\multirow{3}{*}{$\begin{matrix}
{\mybinom{{Q}}{N}+x_4},\\
{\mybinom{{Q}}{N}}+x_7\\
\end{matrix}$}} &$\mybinom{\overline{N}}{\overline{N-P}}$&$\mybinom{{P}}{Q}+x_7$&last& {\multirow{3}{*}{$A\geq 2$}}\\\cline{3-3}\cline{4-3}\cline{5-3}
\vspace{-2mm}&&&&&\\
&& $\mybinom{\overline{N}}{\overline{N-Q}}$&$\mybinom{P}{N}+x_7$&last& \\\cline{3-3}\cline{4-3}\cline{5-3}
\vspace{-2mm}&&&&&\\ 
&& $\mybinom{\overline{N-P}}{P}$&$\mybinom{{Q-P}}{{Q}}+x_4$&first&\\
\hline
\vspace{-2mm}&&&&&\\
{\multirow{3}{*}{$\mybinom{{N-Q}}{{N-Q+P}}$}} &{\multirow{3}{*}{$\begin{matrix}
{\mybinom{\overline{N}}{\overline{N-Q+P}}},\\
{\mybinom{\overline{N}}{\overline{N-Q+P}}}+x_5\\
\end{matrix}$}} &$\mybinom{{N-Q+P}}{\overline{Q-P}}$&$\mybinom{\overline{N-Q+P}}{\overline{N-Q}}+x_5$&last& {\multirow{3}{*}{-}}\\\cline{3-3}\cline{4-3}\cline{5-3}
\vspace{-2mm}&&&&&\\
&& $\mybinom{{N-Q}}{{N}}$&$\mybinom{\overline{P}}{\overline{N}}$&first& \\\cline{3-3}\cline{4-3}\cline{5-3}
\vspace{-2mm}&&&&&\\ 
&& $\mybinom{{N-Q+P}}{N}$&$\mybinom{\overline{N-Q+P}}{\overline{P}}$&first&\\
\hline
\vspace{-2mm}&&&&&\\
{\multirow{2}{*}{$\mybinom{{N-Q}}{\overline{Q-P}}$}} &{\multirow{2}{*}{$\begin{matrix}
{\mybinom{\overline{N}}{\overline{N-Q}}}+x_5,\\
{\mybinom{\overline{N}}{\overline{N-Q}}}+x_6\\
\end{matrix}$}} &${\mybinom{\overline{Q}}{N-Q}}$&$\mybinom{\overline{N}}{\overline{N-P}}+x_6$&last& {\multirow{2}{*}{$A\geq 2$}}\\\cline{3-3}\cline{4-3}\cline{5-3}
\vspace{-2mm}&&&&&\\
&& $\mybinom{N-Q+P}{\overline{Q-P}}$&$\mybinom{\overline{N-Q+P}}{\overline{N-Q}}+x_5$&first&\\
\hline
\vspace{-2mm}&&&&&\\
{\multirow{3}{*}{$\mybinom{\overline{Q}}{\overline{Q-P}}$}} &{\multirow{3}{*}{$\begin{matrix}
{\mybinom{\overline{N-P}}{\overline{N-Q}}}+x_6,\\
{\mybinom{\overline{N-P}}{\overline{N-Q}}}+x_7\\
\end{matrix}$}} &$\mybinom{\overline{Q}}{{N-Q}}+x_6$&$\mybinom{\overline{N}}{\overline{N-P}}+x_6$&first& {\multirow{3}{*}{-}}\\\cline{3-3}\cline{4-3}\cline{5-3}
\vspace{-2mm}&&&&&\\
&& $\mybinom{\overline{N}}{\overline{Q}}$&$\mybinom{\overline{N-P}}{{P}}+x_7$&last& \\\cline{3-3}\cline{4-3}\cline{5-3}
\vspace{-2mm}&&&&&\\ 
&& $\mybinom{\overline{Q-P}}{\overline{N}}$&$\mybinom{\overline{N-Q}}{{P}}+x_7$&last&\\
\hline
\end{tabular}
\medskip
\caption{The constellations of Case (ii) in the proof of Lemma~\ref{lem:chain} where we deal with the subdivision of a pair $\{\Bold{\beta}_1,\Bold{\beta}_2\}$ of vertices exactly one of which, say $\Bold{\beta}_1$, is nontrivial. This table contains all possible constellations of this type modulo symmetry (recall that  $\Bold{\alpha}\xrightarrow{d}\Bold{\alpha}' \in G_2(\mathcal{S})$ if and only if $-\Bold{\alpha}\xrightarrow{C-1-d}-\Bold{\alpha}'\in G_2(\mathcal{S})$). The first column contains the possibilities for $\Bold{\beta}_1$ that can occur in such a constellation. The second column contains the first and the last element of the subdivision of $\Bold{\beta}_1$. The third column contains $\Bold{\beta}_2$, whose (trivial) subdivision is contained in the fourth column. 
The fifth column describes if the first or the last element of the subdivision of $\Bold{\beta}_1$ intersects $\Bold{\beta}_2$. Finally, the sixth column gives the condition under which a given constellation exists. 
For abbreviation we set $x_1=(A-1)P, x_2=(C-B+A-1)P, x_3=(C-B)P, x_4=(C-A)P, x_5=(B-A)P, x_6=(B-1)P, x_7=(C-1)P$. 
\label{tab:iner} }
}
\end{table}

Case (iii): 
By inspecting the graph $G_2(\nS)$ we see that in this case both vertices, $\Bold{\beta}_1$ and $\Bold{\beta}_2$, have the same three vertices as successors. These three vertices are of the form given in \eqref{eq:3igbo} for some $i\in \{1,2,3\}$. Moreover, by inspecting the labels of the edges going out of $\Bold{\beta}_1$ and $\Bold{\beta}_2$ we see that the collection of successors of $\B_{\Bold{\beta}_1}\cup\B_{\Bold{\beta}_2}$ is a (consecutive) subcollection of $M^{-1} V_i$ with $V_i$ as in \eqref{eq:Vi}.  Hence, the Hata graph $H(\Bold{\beta}_1,\Bold{\beta}_2)$ is a path graph which is a subgraph of the graph depicted in Figure~\ref{fig:chain}. 

Summing up this finishes the proof of the claim. 

We now show that $H(\mathcal{L}_{\alpha,k+1})$ is a cycle. To this end, let $\mathcal{L}_{\alpha,k}=\{Y_1,\ldots, Y_p\}$ be the set of vertices of $H(\mathcal{L}_{\alpha,k})$ and $Y_i \mbox{ --- } Y_{i+1}$ for $1\le i \le p$ (whe always assume that $Y_0:=Y_p$ and $Y_{p+1}:=Y_1$; note that $p\ge 4$ by \eqref{eq:circchainLgamma}) its set of edges. Each vertex $Y_i$ of $H(\mathcal{L}_{\alpha,k})$ becomes a path $l_i$ in $H(\mathcal{L}_{\alpha,k+1})$. If $l_i$ is a single vertex, then the above claim (see Case (i) and Case (ii)) implies that this vertex is connected with a terminating vertex of $l_{i-1}$ and with a terminating vertex of $l_{i+1}$. If $l_i$ is a (nondegenerate) path $Z_1  \mbox{ --- }  \cdots  \mbox{ --- }  Z_2$, then a terminating vertex of $l_{i-1}$ is connected to $Z_r$ for some $r\in \{1,2\}$ and a terminating vertex of $l_{i+1}$ is connected to $Z_s$ for some $s\in \{1,2\}$ (see Case (ii) and Case (iii)). We have to show that $r\not=s$. Indeed, suppose on the contrary that both paths are connected to the same vertex, say $Z_1$. Then the element $Z_1$ of the subdivision of $Y_i$ contains 
two disjoint\footnote{Note that $Y_{i-1}$ and $Y_{i+1}$ are disjoint for any $i\in\{1,\ldots,p\}$ because $p\ge 4$.} $4$-fold intersections (one with an element of $Y_{i-1}$ and one with an element of $Y_{i+1}$) which would contradict Lemma~\ref{lem:2fourfold}. Thus the paths $l_i$ ($1\le i\le p$) are arranged in a circular order and, hence, $H(\mathcal{L}_{\alpha,k+1})$ is a cycle. 

Since the edges in $H(\mathcal{L}_{\alpha,k+1})$ correspond to nonempty $4$-fold intersections they represent single points by Theorem~\ref{Main-1}~\eqref{Main1.point}. This implies that $\mathcal{L}_{\alpha,k+1}$ is a circular chain and the induction proof is finished.
\end{proof}

\subsection{Topological characterization of $3$-fold intersections}

This subsection is devoted to the proof of Theorem~\ref{Main-1}~\eqref{Main1.loop}. In all what follows a theory developed by Bing~\cite{Bing51} in order to characterize $1$- and $2$-spheres will be of importance. To apply this theory we introduce some terminology. Let $\mathcal{X}$ be a Peano continuum. A {\em partitioning} of $\mathcal{X}$ is a collection of mutually disjoint open sets whose union is dense in $\mathcal{X}$. A partitioning is called \emph{regular} if each of its elements is the interior its closure. A sequence $\nP_1,\nP_2,\dots$ of partitionings is called a {\em decreasing sequence of partitionings} if $\nP_{k+1}$ is a refinement of $\nP_k$ and the maximum of the diameters of the elements of $\nP_k$ tends to $0$ as $k$ tends to infinity. The basis of our proof of Theorem~\ref{Main-1}~(\ref{Main1.loop}) is given by the following characterization of a simple closed curve due to Bing~\cite{Bing51}.

\begin{proposition}[{{\em cf.}~\cite[Theorem~8]{Bing51}}]\label{Bing-curve}
Let $\mathcal{X}$ be a Peano continuum. A necessary and sufficient condition that $\mathcal{X}$ be a simple closed curve is that one of its decreasing sequences of regular partitionings $\nP_1, \nP_2,\dots$ have the following properties (for $k\ge 1$):
\begin{enumerate}
\item \label{Bing-curve1} The boundary of each element of $\nP_k$ is a pair of distinct points.
\item \label{Bing-curve2} No three elements of $\nP_k$ have a boundary point in common.
\end{enumerate}
\end{proposition}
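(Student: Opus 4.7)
The plan is to establish both implications of this equivalence, with the main effort going into sufficiency. Necessity amounts to exhibiting such a sequence explicitly on $S^1$: after transporting a homeomorphism $\varphi\colon S^1\to \mathcal{X}$, take $\nP_k$ to be the image under $\varphi$ of the $2^k$ open arcs obtained by iterated bisection of $S^1$; regularity holds because each arc is the interior of its closure, condition (\ref{Bing-curve1}) is immediate, condition (\ref{Bing-curve2}) holds because exactly two arcs meet at each bisection point, and the diameters tend to $0$.

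For sufficiency, which is the core of the proposition, assume that a decreasing sequence of regular partitionings $\nP_1,\nP_2,\dots$ satisfying (\ref{Bing-curve1}) and (\ref{Bing-curve2}) is given. I would first encode each level combinatorially as a finite graph $G_k$ whose edges are the closures $\overline{U}$, $U\in \nP_k$, and whose vertices are the boundary points of the elements of $\nP_k$. By (\ref{Bing-curve1}) every edge has two distinct endpoints, and by (\ref{Bing-curve2}) every vertex has degree at most two. Because $\bigcup_{U\in \nP_k}\overline{U}=\mathcal{X}$ (density plus compactness) and $\mathcal{X}$ is connected, the graph $G_k$ is connected. The crux is to argue that $G_k$ is in fact a cycle and not merely a path: a degree-one vertex $v\in G_k$ would be a boundary point of exactly one $U\in \nP_k$, and because $U=\operatorname{int}(\overline{U})$ every small neighbourhood of $v$ in $\mathcal{X}$ meets $U$ but no other element of $\nP_k$; iterating this observation at levels $k+1,k+2,\dots$ via the refinement property produces a nested sequence of such one-sided neighbourhoods whose intersection is $\{v\}$ and forces $v$ to be a local endpoint of $\mathcal{X}$, which is incompatible with the existence of a refining regular partitioning satisfying (\ref{Bing-curve1}) at every level.

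Having shown that each $G_k$ is a simple cycle, the refinement $\nP_{k+1}\preceq \nP_k$ subdivides every edge of $G_k$ into a sub-path of $G_{k+1}$ in a way that respects the cyclic order. The final step is to use this nested sequence of cycles to build a homeomorphism $h\colon S^1\to \mathcal{X}$. Divide $S^1$ at level $k$ into $|\nP_k|$ closed arcs arranged in the same cyclic order as $G_k$, refining compatibly at each level. Define $h$ on the countable dense set of arc endpoints by mapping the $j$-th arc endpoint at level $k$ to the corresponding boundary point of $\nP_k$ in $\mathcal{X}$, and extend by uniform continuity using the fact that the mesh of $\nP_k$ tends to $0$. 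Injectivity on the dense set follows from condition (\ref{Bing-curve2}) (distinct endpoint labels cannot collide in $\mathcal{X}$), and the resulting bijection is a homeomorphism since it is continuous between a compact space and a Hausdorff space.

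The principal obstacle is the rigorous argument that $G_k$ is a cycle rather than a path; the combinatorial degree bound alone is compatible with a path, so one must genuinely exploit the interplay between the regularity condition $U=\operatorname{int}(\overline{U})$, the refining property, and the absence of free endpoints forced at all levels. A secondary delicate point is verifying continuity of $h$ at points of $\mathcal{X}$ that never arise as a boundary point of any $\nP_k$; this is handled by the uniform vanishing of diameters in the decreasing sequence, which makes the image of a shrinking nested sequence of $S^1$-arcs a shrinking nested sequence of $\mathcal{X}$-closures with a unique common point.
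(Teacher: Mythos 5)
The first thing to note is that the paper does not prove this proposition at all: it is quoted as Theorem~8 of Bing's 1951 paper and used as a black box, so there is no in-paper argument to compare yours against. On its own terms, your outline follows the natural route (a combinatorial cycle at each level, then an inverse-limit construction of a homeomorphism from $S^1$), and the necessity half is fine. The sufficiency half, however, has two real gaps. The first concerns the exclusion of degree-one vertices, which you single out as the principal obstacle but then resolve only by assertion: the statement that $v$ becomes ``a local endpoint of $\mathcal{X}$, which is incompatible with the existence of a refining regular partitioning satisfying (1) at every level'' is not an argument, and the iteration over levels $k+1,k+2,\dots$ is unnecessary. The clean version is immediate from regularity: if $v$ lies on the boundary of exactly one $U\in\nP_k$, then (using that $\nP_k$ is finite, a hypothesis you invoke tacitly when you write $\bigcup_{U}\overline{U}=\mathcal{X}$) a neighbourhood $N$ of $v$ avoids $\overline{V}$ for every $V\neq U$, hence $N\subset\overline{U}$, hence $N\subset\operatorname{int}(\overline{U})=U$, contradicting $v\in\partial U$.

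The second and more serious gap is the unproved claim that the elements of $\nP_{k+1}$ contained in a fixed $U\in\nP_k$ form a \emph{consecutive} sub-path of the cycle $G_{k+1}$. This is equivalent to the connectedness of $\overline{U}$, which does not follow from the definition of a partitioning: the elements are merely disjoint open sets with dense union, and nothing stated forces them or their closures to be connected. If $\overline{U}$ splits into two closed pieces, each containing one of the two boundary points of $U$, then the refining elements occupy two opposite arcs of $G_{k+1}$, the cyclic orders at consecutive levels are incompatible, and the endpoint-matching construction of $h$ (including the persistence of level-$k$ vertices as endpoints of level-$(k+1)$ sub-paths, on which injectivity and well-definedness rest) breaks down. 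This is precisely where genuine work is required --- either deduce connectedness of each $\overline{U}$ from conditions (1) and (2) together with regularity and the connectedness of $\mathcal{X}$, or impose connectedness of the partition elements as part of the hypotheses --- and your proposal passes over it in a single clause.
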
 
 
We emphasize that throughout the remaining part of the proof of Theorem~\ref{Main-1} the ambient space will change and we always have to keep in mind with respect to which ambient space we will take boundaries or interiors. For this reason we will always make clear in which space we are working. As mentioned before, the boundary w.r.t.\ a given space $X$ will be denoted by $\partial_X$ (for the closure and the interior we do not use any notation to emphasize on the space; this space should always be clear from the context or will be mentioned explicitly).
 
Our first aim is to prove that the set $L_\alpha$ defined in \eqref{Cgamma} is a simple closed curve for each $\alpha\in\nS$ by using this proposition. To this end fix $\alpha \in \nS$. In order to construct the partitionings for $L_\alpha$, for each $\Bold{\beta}^{(0)} \in G_2(\nS)$ with $\alpha \in \Bold{\beta}^{(0)} $ set
\begin{equation}
\begin{split}
\nP_{\alpha, k}(\Bold{\beta}^{(0)})&=\{f_{d_1d_2\dots d_{k-1}}(\B_{\Bold{\beta}^{(k-1)}})^{\circ};\; \Bold{\beta}^{(0)}\xrightarrow{d_1}\Bold{\beta}^{(1)}\xrightarrow{d_2}\cdots\xrightarrow{d_{k-1}}\Bold{\beta}^{(k-1)}\in G_2(\nS)\} \; (k\ge 1).
\end{split}
\end{equation}
Here the interior $K^{\circ}$ of a set $K$ is taken w.r.t.\ the subspace topology on $L_\alpha$; this is why $\nP_{\alpha, k}(\Bold{\beta}^{(0)})$ depends on $\alpha$. 
Now the sequence $(\nP_{\alpha,k})_{k\geq 1}$ is defined by
\begin{equation}\label{3-partioning}
\nP_{\alpha,k}= \bigcup_{\begin{subarray}{c}\{\beta_1,\beta_2\}\in G_2(\nS)\\ \beta_1 = \alpha \end{subarray}} \nP_{\alpha, k}(\{\beta_1,\beta_2\})  \qquad (k\geq 1).
\end{equation}

We want to prove that  $(\nP_{\alpha, k})_{k\geq 1}$ is a decreasing sequence of regular partitionings of $L_\alpha$ for each $\alpha\in\nS$. To this end we need a result on the boundary and the interior of a $3$-fold intersection.  Recall the notation $\mathcal{L}_{\alpha,k}$  introduced in \eqref{eq:Pkclosed}.

\begin{lemma}\label{inner-bound2}
Let $\alpha\in\nS$ be given. For each vertex $\Bold{\beta}\in G_2(\nS)$ with $\alpha\in\Bold{\beta}$ we have $\overline{\B_{\Bold{\beta}}^{\circ}}=\B_{\Bold{\beta}}$, w.r.t.\ the subspace topology on $L_\alpha$. More generally, we have $\overline{X^\circ}=X$ for each $X\in \mathcal{L}_{\alpha,k}$ and each $k\ge1$.
\end{lemma}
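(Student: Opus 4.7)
The plan is to prove the general statement (with $X \in \mathcal{L}_{\alpha,k}$), since the first claim is its $k=1$ case. The inclusion $\overline{X^\circ} \subseteq X$ is immediate, because $X$ is closed in $\R^3$ (hence in $L_\alpha$) and $X^\circ \subseteq X$. The work is in the reverse inclusion $X \subseteq \overline{X^\circ}$.

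First I would identify $X^\circ$ explicitly as $X$ with at most two points removed. By Lemma~\ref{lem:chain}, the finite collection $\mathcal{L}_{\alpha,k}$ is a circular chain (when suitably ordered), so $X$ meets exactly two other elements $Y_1,Y_2 \in \mathcal{L}_{\alpha,k}$, each in a single point $p_i = X \cap Y_i$ (which is then a $4$-fold intersection point). Since $\mathcal{L}_{\alpha,k}$ covers $L_\alpha$ by Lemma~\ref{lem:piecesresults}~(\ref{lem:piecesresults1}), we have
\[
L_\alpha \setminus X \;=\; \bigcup_{Y \in \mathcal{L}_{\alpha,k},\, Y \neq X}(Y \setminus X),
\]
where each summand is either $Y$ or $Y$ with a single point removed. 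Assuming for a moment that every $Y \in \mathcal{L}_{\alpha,k}$ is a nondegenerate Peano continuum (justified below), $Y$ has no isolated points, so the closure in $L_\alpha$ of each summand is $Y$. Using that $X$ is closed in $L_\alpha$, I then compute
\[
X \setminus X^\circ \;=\; X \cap \overline{L_\alpha \setminus X} \;=\; X \cap \bigcup_{Y\neq X} Y \;=\; \{p_1,p_2\},
\]
which gives $X^\circ = X \setminus \{p_1,p_2\}$.

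Next I would observe that $X = f_{d_1\cdots d_{k-1}}(\B_{\Bold{\gamma}})$ for some $\Bold{\gamma} \in G_2(\nS)$ and some walk in $G_2(\nS)$, where $f_{d_1\cdots d_{k-1}}$ is an affine homeomorphism of $\R^3$. By Lemma~\ref{lem:2lc}~(\ref{lem2lc1}), $\B_{\Bold{\gamma}}$ is a Peano continuum, and since each vertex of $G_2(\nS)$ is the start of infinitely many infinite walks (so $\B_{\Bold{\gamma}}$ cannot be a singleton), it is nondegenerate. Hence $X$ is a connected nondegenerate Hausdorff space, so no point of $X$ is isolated. Thus $\{p_1\}$ and $\{p_2\}$ are closed nowhere dense in $X$, and $X^\circ = X \setminus \{p_1,p_2\}$ is open and dense in $X$, giving $\overline{X^\circ} = X$.

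The main obstacle is the precise identification $X^\circ = X \setminus \{p_1,p_2\}$: it requires both the combinatorial input from the circular chain structure (Lemma~\ref{lem:chain}), which controls \emph{which} other pieces of $\mathcal{L}_{\alpha,k}$ touch $X$ and in \emph{how many} points, and the topological input that each piece in $\mathcal{L}_{\alpha,k}$ is a nondegenerate Peano continuum, so that removing the finitely many contact points from neighboring pieces does not shrink their closure.
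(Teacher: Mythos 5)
Your proof is correct and follows essentially the same strategy as the paper's: both arguments come down to showing that $\partial_{L_\alpha}X$ is a finite set (via the covering property of $\mathcal{L}_{\alpha,k}$ together with the single-point intersections guaranteed by Lemma~\ref{lem:chain}) and that deleting finitely many points from $X$ leaves a dense subset. The only difference is cosmetic and lies in the last step: the paper produces interior points near a given $x\in X$ by subdividing $X$ into arbitrarily small subtiles, each of which is an infinite Peano continuum, whereas you observe directly that $X$, being a nondegenerate connected $T_1$ space, has no isolated points.
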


\begin{proof}
Fix $\alpha\in\nS$. The ambient space in this proof is $L_\alpha$. First observe that \eqref{Cgamma} implies that the collection $\{\B_{\Bold{\gamma}};\,\Bold{\gamma} \in G_2(\nS) \hbox{ with } \alpha \in \Bold{\gamma}\}$ is a finite collection of compact sets which covers $L_\alpha$. Thus for each $\Bold{\beta}=\{\alpha,\alpha'\}\in G_2(\nS)$ we have
\[
\partial_{L_\alpha} \B_{\alpha,\alpha'} \subset \bigcup_{
\begin{subarray}{c}
\alpha''\not\in\{\alpha,\alpha'\}\\ 
\{\alpha,\alpha''\}\in G_2(\nS)
\end{subarray}
}\B_{\alpha,\alpha''}
\]
which implies that (since $\B_{\alpha,\alpha'}$ is closed in $\R^3$ and, hence, also closed in $L_\alpha$)
\begin{equation}\label{eq:incl23bf}
\partial_{L_\alpha} \B_{\alpha,\alpha'}\subset \bigcup_{\begin{subarray}{c}{\alpha'' \in\nS}\\{\{\alpha,\alpha',\alpha''\}\in G_3(\nS)}\end{subarray}}\B_{\alpha,\alpha',\alpha''}.
\end{equation}
Thus, since the sets $\B_{\alpha,\alpha',\alpha''}$ contain at most one point by Theorem~\ref{Main-1}~\eqref{Main1.point}, $\partial_{L_\alpha} \B_{\Bold{\beta}}$ is a finite set. Now choose $\varepsilon > 0$ and $x\in\B_{\Bold{\beta}}$ arbitrary. Subdivide $\B_{\Bold{\beta}}$ according to the set equation \eqref{L-foldGra3} for $r\in\N$ large enough to obtain a subtile $Z=M^{-r+1}(\B_{\Bold{\gamma}}+c) \in \mathcal{C}_r(\Bold{\beta})$ (with $\Bold{\gamma}\in G_2(\nS)$  and $c\in\Z^3$) of $\B_{\Bold{\beta}}$ with diameter less than $\varepsilon$ with $x\in Z$. Since $Z$ is a Peano continuum by Lemma~\ref{lem:2lc} it contains infinitely many points and, hence, there is a point $y\in Z$ with $y\in \B_{\Bold{\gamma}}^{\circ}$. Since $\varepsilon$ was arbitrary, $y$ can be chosen arbitrarily close to $x$. This proves the result for $\B_{\Bold{\beta}}$. 

The assertion for the elements of the subdivisions $\mathcal{L}_{\alpha,k}$, $k\ge1$, is proved in an analogous way. Indeed, the finite collection $\mathcal{L}_{\alpha,k}$ covers $L_\alpha$ which entails that for each $X\in \mathcal{L}_{\alpha,k} $ we have
\begin{equation}\label{boud-subdivide3fineOneSide}
\partial_{L_\alpha} X \subset \bigcup_{Y \in\mathcal{L}_{\alpha,k} \setminus\{X\}} (X\cap Y).
\end{equation}
By Lemma~\ref{lem:chain} the sets $X\cap Y$ contain at most one element, hence, $\partial_{L_\alpha}X$  is a finite set. Since $X=M^{-k+1}(\B_{\Bold{\beta}}+c)$ for some $\Bold{\beta}\in G_2(\nS)$ and some $c\in\Z^3$ we may now subdivide $\B_{\Bold{\beta}}$ according to the set equation \eqref{L-foldGra3} and argue as in the paragraph before.
\end{proof}

We are now in a position to prove that $(\nP_{\alpha,k})_{k\geq 1}$ has the desired properties.

\begin{lemma}\label{lem-partioning2}
The sequence $(\nP_{\alpha,k})_{k\geq 1}$ in \eqref{3-partioning} is a decreasing sequence of regular partitionings of $L_\alpha$ for each $\alpha\in\nS$.
\end{lemma}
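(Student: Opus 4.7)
The plan is to verify each of the four defining properties in turn: that each $\nP_{\alpha,k}$ consists of pairwise disjoint open sets, that their union is dense in $L_\alpha$, that each element equals the interior of its closure (regularity), and finally that the sequence is refining with mesh tending to $0$. All the heavy lifting has already been done in the preceding lemmas; what remains is to package it correctly.

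First I would observe that by definition every element of $\nP_{\alpha,k}$ is the interior (taken in the subspace topology of $L_\alpha$) of some $X\in \mathcal{L}_{\alpha,k}$, hence open in $L_\alpha$. The union $L_\alpha=\bigcup_{X\in\mathcal{L}_{\alpha,k}} X$ follows from iterating the set equation \eqref{L-foldGra3} applied to each $\B_{\alpha,\beta}$ appearing in \eqref{Cgamma}, i.e.\ from Lemma~\ref{lem:piecesresults}\,\eqref{lem:piecesresults1}. Combining this with Lemma~\ref{inner-bound2}, which tells us that $\overline{X^\circ}=X$ for every $X\in\mathcal{L}_{\alpha,k}$, I would conclude that
\[
\overline{\bigcup_{X\in \mathcal{L}_{\alpha,k}} X^\circ} \;=\; \bigcup_{X\in \mathcal{L}_{\alpha,k}} \overline{X^\circ} \;=\; \bigcup_{X\in \mathcal{L}_{\alpha,k}} X \;=\; L_\alpha,
\]
so the union of the elements of $\nP_{\alpha,k}$ is dense. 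Regularity is then automatic: if $U=X^\circ$ then by Lemma~\ref{inner-bound2} the closure $\overline{U}=X$ and so $(\overline{U})^\circ=X^\circ=U$.

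For pairwise disjointness I would use Lemma~\ref{lem:chain}: the collection $\mathcal{L}_{\alpha,k}$ forms a circular chain, so the intersection of any two distinct members is either empty or a single point. The intersection of their interiors is therefore an open subset of $L_\alpha$ containing at most one point, and since $L_\alpha$ is a Peano continuum of positive dimension (by Lemma~\ref{lem:2lc}\,\eqref{lem2lc2} together with the fact that it contains the nondegenerate Peano continua $\B_{\alpha,\beta}$ from Lemma~\ref{lem:2lc}\,\eqref{lem2lc1}), it has no isolated points; hence this intersection must be empty.

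Finally, for the ``decreasing'' property I would check refinement and mesh separately. Refinement follows from the observation that extending any walk $\Bold{\beta}^{(0)}\xrightarrow{d_1}\cdots\xrightarrow{d_{k-1}}\Bold{\beta}^{(k-1)}$ in $G_2(\nS)$ by one more edge $\Bold{\beta}^{(k-1)}\xrightarrow{d_k}\Bold{\beta}^{(k)}$ produces a subtile $f_{d_1\ldots d_k}(\B_{\Bold{\beta}^{(k)}})\subset f_{d_1\ldots d_{k-1}}(\B_{\Bold{\beta}^{(k-1)}})$, so passing to interiors in $L_\alpha$ shows every element of $\nP_{\alpha,k+1}$ sits inside some element of $\nP_{\alpha,k}$. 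For the diameter bound I would choose a norm adapted to the expanding matrix $M$ for which $\|M^{-1}\|_*\le\rho<1$; then every $X\in\mathcal{L}_{\alpha,k}$ has the form $f_{d_1\ldots d_{k-1}}(\B_{\Bold{\beta}^{(k-1)}})$ with $\B_{\Bold{\beta}^{(k-1)}}\subset T$, so its diameter is at most $\rho^{k-1}\operatorname{diam}_*(T)\to 0$ uniformly in the walk. The hardest aspect of the argument is a conceptual rather than a technical one: one must be careful that ``interior'' and ``closure'' are consistently taken in the subspace topology of $L_\alpha$ (not of $\R^3$ or of $\partial T$), which is precisely the subtle point already addressed by Lemma~\ref{inner-bound2}; once that is fixed all the other steps are essentially bookkeeping.
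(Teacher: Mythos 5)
Your proposal is correct and follows essentially the same route as the paper: density and regularity from Lemma~\ref{inner-bound2} together with the covering property, disjointness by showing two distinct closures meet in at most one point and then using the absence of isolated points, and the decreasing property from the set equation plus contractivity of the $f_d$. The only (immaterial) difference is that you obtain the ``at most one point'' bound from the circular-chain structure in Lemma~\ref{lem:chain}, whereas the paper derives it from Lemma~\ref{lem:piecesresults}~\eqref{lem:piecesresults2} combined with the already-proved singleton property of $4$-fold intersections.
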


\begin{proof}
The ambient space in this proof is $L_\alpha$. We first claim that $\nP_{\alpha,k}$ is a partitioning of $L_\alpha$ for each $k\geq 1$. To prove this we have to show that two distinct elements of $\nP_{\alpha,k}$ are disjoint and 
\begin{equation}\label{LaPka}
L_\alpha=\bigcup_{X\in \nP_{\alpha,k}} \overline{X}.
\end{equation}
For given distinct $X_1,X_2 \in \nP_{\alpha,k}$ we have $\overline{X_1},\overline{X_2} \in \mathcal{C}_k^{(2)}$ by Lemma~\ref{inner-bound2}. Lemma~\ref{lem:piecesresults} thus implies that $\overline{X_1}\cap \overline{X_2}$ is either empty or an affine copy of $\B_{\Bold{\beta}}$ for some $\Bold{\beta} \in G_3(\nS)$ and, hence, by Theorem~\ref{Main-1}~\eqref{Main1.point} the intersection $\overline{X_1}\cap \overline{X_2}$ contains at most one point $p$. Since $\overline{X_1}$ and $\overline{X_1}$ are Peano continua by Lemma~\ref{lem:2lc} they do not contain isolated points which implies that the point $p$ cannot be contained in the open set $X_1 \cap X_2$. Thus we conclude that $X_1 \cap X_2=\emptyset$. Since \eqref{LaPka} follows from the definition of $\nP_{\alpha,k}$ together with Lemma~\ref{inner-bound2} and the set equation \eqref{L-foldGra3} we proved the claim. The fact that $\nP_{\alpha,k}$ is regular for each $k\ge 1$ follows from Lemma~\ref{inner-bound2}.

Now we will show that $(\nP_{\alpha,k})_{k\geq 1}$ is a decreasing sequence of partitionings. First we prove  that $\nP_{\alpha,k+1}$ is a refinement of $\nP_{\alpha,k}$ for each $k\ge 1$. Indeed, by the set equation \eqref{L-foldGra2} the closure $f_{d_1\dots d_{k-1}d_{k}}(\B_{\Bold{\beta}^{(k)}})$ of each element of 
$\nP_{\alpha,k+1}$ is contained in the closure $f_{d_1\dots d_{k-1}}(\B_{\Bold{\beta}^{(k-1)}})$ of some element of $\nP_{\alpha,k}$. Taking interiors we get the refinement assertion. The maximum of the diameters of the elements of $\nP_{\alpha,k}$ approaches zero because $f_d$ is a contraction for each $d\in \D$. 
\end{proof}

We now show that $(\nP_{\alpha,k})_{k\geq 1}$ satisfy \eqref{Bing-curve1} and \eqref{Bing-curve2} of Proposition~\ref{Bing-curve}. We start with the following lemma.

\begin{lemma}\label{lem:bdov}
For $\alpha,\beta\in \nS$ with $\{\alpha,\beta\}\in G_2(\nS)$ we have
\begin{equation}\label{boud-subdivide3}
\partial_{L_\alpha} \B_{\alpha,\beta}= 
\bigcup_{
\begin{subarray}{c}
\gamma\in\nS\\
\{\alpha,\beta,\gamma\}\in G_3(\nS)
\end{subarray}
}
\B_{\alpha,\beta,\gamma}.
\end{equation}
More generally, for each $k\ge 1$ and each $X\in \mathcal{L}_{\alpha,k}$ we have 
\begin{equation}\label{boud-subdivide3fine}
\partial_{L_\alpha} X= \bigcup_{Y \in \mathcal{L}_{\alpha,k} \setminus\{X\}} (X\cap Y).
\end{equation}
\end{lemma}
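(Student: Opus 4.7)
The $\subset$ inclusions in both \eqref{boud-subdivide3} and \eqref{boud-subdivide3fine} have already been established as \eqref{eq:incl23bf} and \eqref{boud-subdivide3fineOneSide} inside the proof of Lemma~\ref{inner-bound2}, so the task reduces to proving the reverse inclusions $\supset$. The plan is to take an arbitrary candidate boundary point $p$ appearing on the right-hand side and to produce a sequence in $L_\alpha$ that avoids the set whose boundary we are computing but converges to $p$; this will certify that $p$ lies on the boundary in the subspace topology on $L_\alpha$.

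For \eqref{boud-subdivide3}~$\supset$, I would fix $\{\alpha,\beta,\gamma\}\in G_3(\nS)$ and denote by $p$ the unique point of $\B_{\alpha,\beta,\gamma}$, which is a singleton by Theorem~\ref{Main-1}~\eqref{Main1.point}. First I would record the identity
\[
\B_{\alpha,\beta}\cap \B_{\alpha,\gamma}=T\cap (T+\alpha)\cap (T+\beta)\cap (T+\gamma)=\B_{\alpha,\beta,\gamma}=\{p\},
\]
so that $\B_{\alpha,\gamma}\setminus\{p\}$ is disjoint from $\B_{\alpha,\beta}$ while still sitting inside $L_\alpha$. Next I would check that $\{\alpha,\gamma\}$ is itself a vertex of $G_2(\nS)$: projecting onto the $\{\alpha,\gamma\}$-coordinates any infinite walk in $G_3(\nS)$ starting at $\{\alpha,\beta,\gamma\}$ yields an infinite walk in $G_2(\nS)$ starting at $\{\alpha,\gamma\}$, so the sink-reduction defining $G_2(\nS)$ cannot delete this vertex. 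By Lemma~\ref{lem:2lc}~\eqref{lem2lc1}, the set $\B_{\alpha,\gamma}$ is then a Peano continuum, and it contains more than one point because (as observed in the proof of Lemma~\ref{lem:2lc}~\eqref{lem2lc1}) every vertex of $G_2(\nS)$ is the starting point of infinitely many infinite walks, each giving a distinct point of $\B_{\alpha,\gamma}$ via Proposition~\ref{Char-boundary}. A connected set with at least two points has no isolated points, so $p$ is approached by a sequence in $\B_{\alpha,\gamma}\setminus\{p\}\subset L_\alpha\setminus \B_{\alpha,\beta}$, which yields $p\in\partial_{L_\alpha}\B_{\alpha,\beta}$.

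The refined statement \eqref{boud-subdivide3fine} is handled by exactly the same template, only moved one scale deeper. Given distinct $X,Y\in \mathcal{L}_{\alpha,k}$ with $X\cap Y\neq \emptyset$, Lemma~\ref{lem:piecesresults}~\eqref{lem:piecesresults2} combined with Theorem~\ref{Main-1}~\eqref{Main1.point} and~\eqref{Main1.empty} forces $X\cap Y$ to be a single point $q$. Each element of $\mathcal{L}_{\alpha,k}$ is by construction of the form $M^{-k+1}(\B_{\Bold{\beta}}+c)$ with $\Bold{\beta}\in G_2(\nS)$, so $Y$ is again a connected Peano continuum containing more than one point, by the very same reasoning as above. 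Hence $q$ is not isolated in $Y$, and a sequence in $Y\setminus\{q\}\subset L_\alpha\setminus X$ then converges to $q$, giving $q\in \partial_{L_\alpha} X$.

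I expect the most delicate step to be the non-degeneracy claim for $\B_{\alpha,\gamma}$ (and its $k$-th level analogue for $Y$): this is the single point in the proof where the topological conclusion must be fed by genuine combinatorial input on $G_2(\nS)$, namely that every vertex of this graph starts infinitely many infinite walks. Once that fact is secured from Table~\ref{tab:triple-graph} and Proposition~\ref{Char-boundary}, the rest of the argument is a short connectedness and closure computation using only that $4$-fold intersections are singletons and $5$-fold intersections are empty.
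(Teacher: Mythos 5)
Your proof is correct, and it reaches the reverse inclusions by a genuinely different mechanism than the paper. The paper argues by contradiction: if a point $x$ of $\B_{\alpha,\beta,\gamma}$ were a relative interior point of $\B_{\alpha,\beta}$ in $L_\alpha$, then Lemma~\ref{inner-bound2} (density of the relative interior of $\B_{\alpha,\gamma}$) would force a whole relative ball to sit inside $\B_{\alpha,\beta}\cap \B_{\alpha,\gamma}=\B_{\alpha,\beta,\gamma}$, which is impossible because that set is a singleton while $L_\alpha$ is a nondegenerate Peano continuum. You instead argue directly: since $\B_{\alpha,\gamma}$ is a nondegenerate Peano continuum (Lemma~\ref{lem:2lc} together with the non-singleton remark in its proof), it is perfect, so $p=\B_{\alpha,\beta,\gamma}$ is a limit of points of $\B_{\alpha,\gamma}\setminus\{p\}\subset L_\alpha\setminus \B_{\alpha,\beta}$, which certifies $p\in\partial_{L_\alpha}\B_{\alpha,\beta}$. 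What your route buys is economy: it dispenses with Lemma~\ref{inner-bound2} altogether and needs only connectedness and nondegeneracy of the neighboring piece; what the paper's route buys is uniformity, since the same interior-density scheme is reused verbatim for the $2$-fold analogue in Lemma~\ref{lem:bdov2}, where perfectness alone would not suffice and a dimension-theoretic substitute is used. Your handling of \eqref{boud-subdivide3fine} is the same template one scale down and is equally valid (you extract the singleton property of $X\cap Y$ from Lemma~\ref{lem:piecesresults} and Theorem~\ref{Main-1}, where the paper cites Lemma~\ref{lem:chain}; both are legitimate). A small remark: your verification that $\{\alpha,\gamma\}\in G_2(\nS)$ is a point the paper leaves implicit, and it follows even more directly from $\emptyset\neq \B_{\alpha,\beta,\gamma}\subset \B_{\alpha,\gamma}$ via Proposition~\ref{Char-boundary}.
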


\begin{proof}
The ambient space in this proof is $L_\alpha$. Let $\mathcal{B}(x,\delta)=\{y\in L_\alpha;\, |x-y|<\delta\}$. The fact that the left hand side of \eqref{boud-subdivide3} is contained in the right hand side follows from \eqref{eq:incl23bf}. To prove the reverse inclusion, suppose that for some $\gamma \in \nS$ with $\{\alpha,\beta,\gamma\}\in G_3(\nS)$ there exists $x\in \B_{\alpha,\beta,\gamma}\setminus{\partial_{L_\alpha}{\B_{\alpha,\beta}}}$. 
Since $\B_{\alpha,\beta,\gamma} \subset \B_{\alpha,\beta}$, this implies that $x\in \B_{\alpha,\beta}^{\circ}$ and, hence, there exists $\delta >0$ with $\mathcal{B}(x,\delta)\subset \B_{\alpha,\beta}$. Since $\B_{\alpha,\beta,\gamma} \subset \B_{\alpha,\gamma}$, we also have $x\in \B_{\alpha,\gamma}$ and by Lemma~\ref{inner-bound2} there exists $y\in \B_{\alpha,\gamma}^{\circ}\cap \mathcal{B}(x,\delta)$. Thus there exists $\delta'>0$ such that $\mathcal{B}(y,\delta') \subset \B_{\alpha,\gamma}\cap\mathcal{B}(x,\delta)$.
This implies $\mathcal{B}(y,\delta')\subset \B_{\alpha,\beta}\cap \B_{\alpha,\gamma}=\B_{\alpha,\beta,\gamma}.$
By Theorem~\ref{Main-1}~\eqref{Main1.point}, $\B_{\alpha,\beta,\gamma}$ is single point for each vertex $\{\alpha,\beta,\gamma\}$ in $G_3(\nS)$. However, a single point cannot contain a ball in $L_\alpha$ because this set is a Peano continuum  by Lemma~\ref{lem:2lc}. This contradiction finishes the proof for $\partial_{L_\alpha} \B_{\alpha,\beta}$. 

The second assertion is proved along the same lines. Indeed, by \eqref{boud-subdivide3fineOneSide}  the left hand side of \eqref{boud-subdivide3fine} is contained in the right hand side. For the reverse inclusion assume that for some $Y \in \mathcal{L}_{\alpha,k} \setminus\{X\}$ there exists $x\in (X\cap Y) \setminus \partial_{L_\alpha}X$. 
Thus $x\in X^{\circ}$ and, hence, there exists $\delta >0$ with $\mathcal{B}(x,\delta)\subset X$. Since we also have $x\in Y$, by Lemma~\ref{inner-bound2} there exists $y\in Y^{\circ}\cap \mathcal{B}(x,\delta)$. Thus there exists $\delta'>0$ such that $\mathcal{B}(y,\delta') \subset Y\cap\mathcal{B}(x,\delta)\subset X\cap Y$. By Lemma~\ref{lem:chain}, this set is a singleton which cannot contain a ball in the Peano continuum $L_\alpha$, a contradiction. 
\end{proof}

Assertion (2) of the next proposition implies Theorem~\ref{Main-1}~(\ref{Main1.loop}).

\begin{proposition}\label{lem:Main1.loop}\mbox{}
\begin{itemize}
\item[(1)] If $\alpha \in\nS$ then $L_\alpha$ is a simple closed curve.
\item[(2)] If $\Bold{\beta} \subset \Z^3\setminus\{0\}$ contains $2$ elements then the $3$-fold intersection $\B_{\Bold{\beta}}$ is homeomorphic to an arc if $\Bold{\beta}\in G_2(\nS)$ and $\B_{\Bold{\beta}}=\emptyset$ otherwise.
\end{itemize}
\end{proposition}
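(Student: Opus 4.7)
The plan is to deduce both parts from Bing's characterization of simple closed curves, Proposition~\ref{Bing-curve}.

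For part~(1), I would apply Proposition~\ref{Bing-curve} to the Peano continuum $L_\alpha$ (Lemma~\ref{lem:2lc}~\eqref{lem2lc2}) using the decreasing sequence of regular partitionings $(\nP_{\alpha,k})_{k \ge 1}$ of Lemma~\ref{lem-partioning2}, so only the two combinatorial conditions of Bing need to be checked. By Lemma~\ref{lem:chain} the collection $\mathcal{L}_{\alpha,k}$ forms a circular chain whose length $p$ satisfies $p \ge 4$: the initial chains exhibited in \eqref{eq:circchainLgamma} have $4$ or $6$ members, and the inductive step in the proof of Lemma~\ref{lem:chain} shows that subdivision can only lengthen them. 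By Lemma~\ref{lem:bdov}, the boundary in $L_\alpha$ of each element of $\nP_{\alpha,k}$ equals the union of its intersections with the remaining members of $\mathcal{L}_{\alpha,k}$. In a circular chain of length at least~$4$ each piece meets exactly two others, each in a single point, and these two points are distinct since non-consecutive members are disjoint; this verifies condition~(\ref{Bing-curve1}). If three pieces were to share a boundary point, two of them would be non-consecutive yet meet at that point, contradicting the chain structure; this verifies condition~(\ref{Bing-curve2}). Hence $L_\alpha$ is a simple closed curve.

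For part~(2), if $\Bold{\beta}\notin G_2(\nS)$ then $\B_{\Bold{\beta}}=\emptyset$ by Proposition~\ref{Char-boundary}. Otherwise, write $\Bold{\beta}=\{\alpha_1,\alpha_2\}\in G_2(\nS)$; then $\B_{\Bold{\beta}}$ is one of the members of the circular chain $\mathcal{L}_{\alpha_1,1}$ whose union is the simple closed curve $L_{\alpha_1}$ from part~(1). Consequently $\B_{\Bold{\beta}}$ is a connected compact proper subset of $L_{\alpha_1}$ containing the two distinct points that form its boundary in $L_{\alpha_1}$, and a classical topological fact (every proper closed connected subset of a simple closed curve with more than one point is an arc) implies that $\B_{\Bold{\beta}}$ is homeomorphic to an arc. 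The explicit list of the $36$ sets $\Bold{\beta}$ with $\B_{\Bold{\beta}}\neq\emptyset$ is read off from Table~\ref{tab:triple-graph}.

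The main work lies in verifying Bing's conditions, but this has effectively been reduced to the circular-chain property via Lemmas~\ref{lem:chain} and~\ref{lem:bdov}; given those inputs, the remaining step is the routine assembly indicated above, with no further obstacle anticipated.
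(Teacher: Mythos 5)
Your proposal is correct and follows essentially the same route as the paper: Bing's criterion (Proposition~\ref{Bing-curve}) applied to the partitionings of Lemma~\ref{lem-partioning2}, with both conditions verified via the circular-chain structure of Lemma~\ref{lem:chain} and the boundary identity of Lemma~\ref{lem:bdov}, and part~(2) deduced by realizing $\B_{\Bold{\beta}}$ as a proper nondegenerate closed connected subset of the simple closed curve $L_{\alpha}$. Your explicit justification that the two boundary points of each piece are distinct (via disjointness of non-consecutive chain members) is a small elaboration the paper leaves implicit, but the argument is the same.
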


\begin{proof}
To show (1) we work in the ambient space $L_\alpha$. Our goal is to apply Proposition~\ref{Bing-curve}. Let $(\nP_{\alpha,k})_{k\ge 1}$ be the sequence given in \eqref{3-partioning}. This sequence is a decreasing sequence of regular partitionings of $L_\alpha$  by Lemma~\ref{lem-partioning2}. We now have to prove that \eqref{3-partioning} satisfies the two conditions of Proposition~\ref{Bing-curve}. First we claim that the boundary of each $X \in \nP_{\alpha,k}$ is a pair of distinct points for each $k\in\N$. By Lemmas~\ref{inner-bound2} and~\ref{lem:chain}, we know that $\overline{X}$ intersects the elements in the union 
\[
\bigcup_{Y \in \nP_{\alpha,k} \setminus\{X\}} (\overline{X}\cap \overline{Y}) = \bigcup_{Y \in\mathcal{L}_{\alpha,k} \setminus\{\overline{X}\}} (\overline{X}\cap Y)
\] 
in exactly two points. Thus \eqref{boud-subdivide3fine} implies the claim and, hence, Proposition~\ref{Bing-curve}~\eqref{Bing-curve1}. 
The fact that there are no three elements of $\nP_{\alpha,k}$ having a common boundary point is an immediate consequence of Lemma~\ref{lem:chain} yielding Proposition~\ref{Bing-curve}~\eqref{Bing-curve2}.  Now we can apply Proposition~\ref{Bing-curve} which yields that $L_\alpha$ is a simple closed curve. 

To prove (2) let $\Bold{\beta}\in G_2(\nS)$ and choose $\alpha \in \Bold{\beta}$. Since $\B_{\Bold{\beta}}$ is a Peano continuum which is a proper subset of the simple closed curve $L_\alpha$, it is an arc. 
If $\Bold{\beta}\not\in G_2(\nS)$ then $\B_{\Bold{\beta}}=\emptyset$ by Proposition~\ref{Char-boundary}.
\end{proof}

The content of the following lemma on dimension theory can be found in Kuratowski~\cite[\S\S 25 and 27]{Kuratowski66}.

\begin{lemma}\label{lem:KurTopDim} 
For a set $X\subset \R^3$, denote its topological dimension by $\mathrm{dim}(X)$.
\begin{enumerate}
\item\label{lem:KurTopDim1} If $X \subset Y \subset \R^3$, then $\mathrm{dim}(X) \le \mathrm{dim}(Y)$.
\item\label{lem:KurTopDim2} Let $Y\subset \R^3$. If $X_1,\ldots,X_n$ are closed in $Y$ with $Y=X_1\cup\dots\cup X_n$, then $\mathrm{dim}(Y) \le \max_{1\le i\le n}\mathrm{dim}(X_i)$.  
\end{enumerate}
\end{lemma}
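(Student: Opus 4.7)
The plan is to invoke the standard framework of dimension theory for separable metric subspaces of $\R^3$, using the small inductive dimension $\mathrm{ind}$ as the working definition of $\mathrm{dim}$ (which coincides with the covering dimension in this setting). Recall that $\mathrm{ind}(\emptyset)=-1$ and $\mathrm{ind}(Z)\le n$ means that every point of $Z$ has arbitrarily small neighborhoods (in $Z$) whose boundaries in $Z$ have $\mathrm{ind}\le n-1$. Both claims are then proved by induction on $n=\mathrm{dim}(Y)$.

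For part~\eqref{lem:KurTopDim1}, suppose inductively that subset monotonicity holds for all dimensions below $n$, and let $X\subset Y$ with $\mathrm{dim}(Y)\le n$. Given $x\in X$ and a neighborhood $U$ of $x$ in $X$, write $U=U'\cap X$ for some open $U'\subset Y$. By the assumption on $Y$, there exists an open $V'\subset U'$ with $x\in V'$ and $\mathrm{dim}(\partial_Y V')\le n-1$. Setting $V=V'\cap X$, one checks $\partial_X V\subset\partial_Y V'\cap X$, so by the inductive hypothesis applied to $\partial_X V\subset\partial_Y V'$ we get $\mathrm{dim}(\partial_X V)\le n-1$. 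Hence $\mathrm{dim}(X)\le n$, as required.

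For part~\eqref{lem:KurTopDim2}, set $d=\max_i\mathrm{dim}(X_i)$ and induct on $d$. The case $d=-1$ is trivial since each $X_i$, and therefore $Y$, is empty. For the inductive step, fix $y\in Y$ and a neighborhood $U$ of $y$ in $Y$. The strategy is to produce, by a finite recursion on $i=1,\ldots,n$, a nested sequence of open neighborhoods $V_0\supset V_1\supset\cdots\supset V_n$ of $y$ in $Y$ with $V_0=U$ such that $\partial_Y V_i\cap X_j$ has dimension at most $d-1$ for every $j\le i$. At step $i$, apply the inductive definition inside the closed subspace $X_i$ to the trace $V_{i-1}\cap X_i$ to find a relatively open set with small boundary in $X_i$; because $X_i$ is closed in $Y$, this set extends to an open $V_i\subset V_{i-1}$ in $Y$ whose boundary $\partial_Y V_i$ meets $X_i$ in a set of $\mathrm{dim}\le d-1$, while preserving the corresponding bound for all $j<i$ by taking care that $\partial_Y V_i\cap X_j\subset\partial_Y V_{i-1}\cap X_j$ up to a set of $\mathrm{dim}\le d-1$. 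Finally, $\partial_Y V_n=\bigcup_{j=1}^{n}(\partial_Y V_n\cap X_j)$ is a finite union of closed sets of dimension $\le d-1$, and applying the inductive hypothesis of part~\eqref{lem:KurTopDim2} in dimension $d-1$ gives $\mathrm{dim}(\partial_Y V_n)\le d-1$. Thus $\mathrm{dim}(Y)\le d$.

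The delicate step is the recursive construction in part~\eqref{lem:KurTopDim2}: the small-boundary neighborhood produced inside $X_i$ must be extended to $Y$ without destroying the boundary control already achieved for $X_1,\ldots,X_{i-1}$, and this is where the closedness of the $X_j$'s in $Y$ is essential. Rather than redo these technicalities, I would simply cite the relevant sections of Kuratowski~\cite{Kuratowski66}, where both the subspace theorem and the finite sum theorem for topological dimension are proved in full generality for separable metric spaces, which covers our ambient space $\R^3$.
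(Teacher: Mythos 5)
The paper offers no proof of this lemma at all: it simply states that the content "can be found in Kuratowski~\cite[\S\S 25 and 27]{Kuratowski66}", which is exactly where your proposal also lands, so the two approaches coincide. Your accompanying sketches of the subspace theorem and the finite (closed) sum theorem for the small inductive dimension are the standard arguments and are fine as motivation, with the acknowledged technicalities in the sum theorem correctly deferred to the cited reference.
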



\begin{lemma}\label{inner-bound1}\mbox{}
For each $\alpha\in \nS$, we have $\overline{\B_{\alpha}^{\circ}}=\B_{\alpha}$ w.r.t.\ the subspace topology on $\partial T$. More generally, the same holds for each element of the subdivision $\mathcal{C}_k^{(1)}$ defined in \eqref{eq:Cellk} for $k\ge1$.
\end{lemma}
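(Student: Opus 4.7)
My plan is to mirror the argument of Lemma~\ref{inner-bound2}, replacing the finiteness of $\partial_{L_\alpha}\B_{\Bold{\beta}}$ by a dimension-theoretic estimate obtained from Lemma~\ref{lem:KurTopDim}. The ambient space throughout is $\partial T$.

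First I would observe that, because $\partial T=\bigcup_{\beta\in\nS}\B_\beta$ is a finite cover by closed sets, every boundary point of $\B_\alpha$ in $\partial T$ must lie in some other neighbour intersection, so
\[
\partial_{\partial T}\B_\alpha \;\subset\; \bigcup_{\substack{\beta\in\nS\\ \{\alpha,\beta\}\in G_2(\nS)}}\B_{\alpha,\beta}\;=\;L_\alpha.
\]
Proposition~\ref{lem:Main1.loop}(2) shows that each $\B_{\alpha,\beta}$ on the right is an arc (hence of topological dimension~$1$), so Lemma~\ref{lem:KurTopDim} yields $\mathrm{dim}(\partial_{\partial T}\B_\alpha)\le 1$.

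Next I would establish $\mathrm{dim}\,\B_\alpha\ge 2$ for every $\alpha\in\nS$. Since $T$ is compact with nonempty interior and $\R^3\setminus T$ is nonempty and open, $\partial T$ separates these two disjoint open sets, and the classical separation theorem of dimension theory forces $\mathrm{dim}\,\partial T=2$. Applying Lemma~\ref{lem:KurTopDim}~\eqref{lem:KurTopDim2} to $\partial T=\bigcup_{\alpha\in\nS}\B_\alpha$ produces some $\alpha_0\in\nS$ with $\mathrm{dim}\,\B_{\alpha_0}\ge 2$. Inspection of Figure~\ref{Gamma2_1} shows that $G(\nS)$ is strongly connected, so for every $\alpha\in\nS$ there is a walk $\alpha\xrightarrow{d_1}\cdots\xrightarrow{d_k}\alpha_0$ in $G(\nS)$; iterating the set equation~\eqref{eq:bgammaseteq} gives $f_{d_1\cdots d_k}(\B_{\alpha_0})\subset\B_\alpha$, and since $f_{d_1\cdots d_k}$ is an affine homeomorphism of $\R^3$ we deduce $\mathrm{dim}\,\B_\alpha\ge\mathrm{dim}\,\B_{\alpha_0}\ge 2$.

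With these two ingredients the approximation argument runs exactly as in Lemma~\ref{inner-bound2}. Fix $\alpha\in\nS$, $x\in\B_\alpha$, and $\varepsilon>0$. Iterating~\eqref{eq:bgammaseteq} sufficiently many times (possible because $M$ is expanding) I obtain a subtile $Z=M^{-r+1}(\B_\gamma+c)\in\mathcal{C}_r(\{\alpha\})$ with $\gamma\in\nS$, $c\in\Z^3$, $x\in Z$, and $\mathrm{diam}(Z)<\varepsilon$. Since $Z$ is an affine image of $\B_\gamma$, the previous paragraph yields $\mathrm{dim}\,Z\ge 2$, while $\mathrm{dim}(Z\cap\partial_{\partial T}\B_\alpha)\le \mathrm{dim}(\partial_{\partial T}\B_\alpha)\le 1$ by Lemma~\ref{lem:KurTopDim}~\eqref{lem:KurTopDim1}. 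Hence $Z\setminus\partial_{\partial T}\B_\alpha\ne\emptyset$, and any $y$ in this set satisfies $y\in\B_\alpha^\circ$ and $|y-x|<\varepsilon$, proving $\overline{\B_\alpha^\circ}=\B_\alpha$. For the general case $X\in\mathcal{C}_k^{(1)}$ the argument is identical: Lemma~\ref{lem:piecesresults}~\eqref{lem:piecesresults2} together with Proposition~\ref{lem:Main1.loop}(2) shows that $\partial_{\partial T}X$ is contained in a finite union of affine images of arcs and hence has dimension at most~$1$, while further iteration of~\eqref{eq:bgammaseteq} produces subtiles of $X$ of dimension~$\ge 2$ and arbitrarily small diameter.

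The main obstacle is the uniform lower bound $\mathrm{dim}\,\B_\alpha\ge 2$; once this is secured, ``a Peano continuum has infinitely many points'' in the proof of Lemma~\ref{inner-bound2} is replaced by ``a set of dimension~$\ge 2$ cannot be contained in a set of dimension~$\le 1$'', and the rest of the argument is a direct translation.
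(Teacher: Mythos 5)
Your proposal is correct and follows essentially the same route as the paper: the inclusion $\partial_{\partial T}\B_\alpha\subset L_\alpha$, the bound $\mathrm{dim}(\partial_{\partial T}\B_\alpha)\le 1$ via Proposition~\ref{lem:Main1.loop}~(2) and Lemma~\ref{lem:KurTopDim}, the lower bound $\mathrm{dim}(\B_\alpha)\ge 2$ from the cut property of $\partial T$ propagated by strong connectedness of $G(\nS)$, and the small-subtile approximation argument are exactly the steps of the paper's proof, including the treatment of the general case via Lemma~\ref{lem:piecesresults}~\eqref{lem:piecesresults2}. The only difference is cosmetic: you spell out the mechanism behind the strong-connectedness step, which the paper leaves implicit.
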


\begin{proof}
The ambient space in this proof is $\partial T$. Recall first that by \eqref{boun_1} the collection $\{\B_\gamma;\, \gamma \in \nS\}$ is a finite collection of compact sets which covers $\partial T$. Thus for each $\alpha\in \nS$ the boundary $\partial_{\partial T}\B_\alpha$ is covered by $\bigcup_{\gamma\not=\alpha}\B_\gamma$ which implies that 
\begin{equation}\label{eq:subpart1}
\partial_{\partial T}\B_{\alpha}\subset L_\alpha.
\end{equation}
By  Proposition~\ref{lem:Main1.loop}, $L_\alpha$ is a finite union of arcs (having topological dimension $1$). Thus Lemma~\ref{lem:KurTopDim} implies that $\mathrm{dim}(\partial_{\partial T} \B_{\alpha})\le1$. On the other hand, since $\partial T$ forms a cut of $\R^3$, we have $\mathrm{dim}(\partial T)\ge 2$ by~\cite[\S59, II, Theorem~1]{Kuratowski68}. Therefore, by Lemma~\ref{lem:KurTopDim}~\eqref{lem:KurTopDim2} there exists $\beta \in \nS$ such that $\mathrm{dim}(\B_\beta)\ge 2$. Since $G(\nS)$ is strongly connected, the same is true for each $\beta \in \nS$ by Lemma~\ref{lem:KurTopDim}~\eqref{lem:KurTopDim1}. Now choose $\varepsilon > 0$ and $x\in\B_{\alpha}$ arbitrary. Subdivide $\B_{\alpha}$ according to the set equation \eqref{L-foldGra3} for $k$ large enough to yield the existence of $X=M^{-k+1}(\B_{\beta}+c) \in \mathcal{C}_k(\alpha)$ with $\beta\in \nS$ and $c\in\Z^3$ such that $X$ is a subtile of $\B_{\alpha}$ with diameter less than $\varepsilon$ and $x\in X$. As $X \subset \B_{\alpha}$ with $\mathrm{dim}(X)\ge 2$ and $\mathrm{dim}(\partial_{\partial T} \B_{\alpha})\le1$ there is a point $y\in X$ with $y\in \B_{\alpha}^{\circ}$. Since $\varepsilon$ was arbitrary, such a point $y$ can be chosen arbitrarily close to $x$. This proves the result for $\B_{\alpha}$. 

To prove the general case fix $k\ge 1$. Since $\mathcal{C}_k^{(1)}$ is a finite collection of compact sets covering $\partial T$, for each $X \in \mathcal{C}_k^{(1)}$ we have
\begin{equation}\label{eq:bdCovGen}
\partial_{\partial T} X \subset \bigcup_{Y \in \mathcal{C}_k^{(1)} \setminus \{X\}} X \cap Y.
\end{equation}
By Lemma~\ref{lem:piecesresults}~(2) each set $X\cap Y$ in the union on the right hand side is an affine image of a set $\B_{\Bold{\beta}}$ with $|\Bold{\beta}|\ge 2$ and, hence, homeomorphic to an arc or to a point by Proposition~\ref{lem:Main1.loop}~(2) and Theorem~\ref{Main-1}~\eqref{Main1.point}. Thus $\mathrm{dim}(\partial_{\partial T} X)\le1$ and we may continue in the same way as in the special case.
\end{proof}

\begin{lemma}\label{lem:bdov2}
For $\alpha\in \nS$, we have $\partial_{\partial T} \B_{\alpha}=L_\alpha$.
\end{lemma}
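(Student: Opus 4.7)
The inclusion $\partial_{\partial T}\B_\alpha \subset L_\alpha$ is already recorded in~\eqref{eq:subpart1}, so my plan is to establish the reverse inclusion $L_\alpha \subset \partial_{\partial T}\B_\alpha$ by combining the subdivision formula~\eqref{eq:bgammaseteq} with dimension theory (Lemma~\ref{lem:KurTopDim}).

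First I would fix $x \in L_\alpha$, so that $x\in \B_{\alpha,\beta}$ for some $\beta \in \nS$ with $\{\alpha,\beta\}\in G_2(\nS)$. To show $x\in \partial_{\partial T}\B_\alpha$, it suffices to produce, for every $\varepsilon>0$, a point $y\in \partial T\setminus \B_\alpha$ with $|x-y|<\varepsilon$. The idea is to zoom in on $x$ through the subdivision of $\B_\beta$ obtained by iterating~\eqref{eq:bgammaseteq}: for $k$ large enough there is a subtile $X = f_{d_1\cdots d_{k-1}}(\B_{\beta^{(k-1)}})$ coming from a walk $\beta = \beta^{(0)}\xrightarrow{d_1}\cdots\xrightarrow{d_{k-1}}\beta^{(k-1)}$ in $G(\nS)$ that contains $x$ and satisfies $\mathrm{diam}(X)<\varepsilon$.

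The crux is then a dimension comparison. On the one hand, $X$ is an affine image of $\B_{\beta^{(k-1)}}$, so $\mathrm{dim}(X) = \mathrm{dim}(\B_{\beta^{(k-1)}}) \ge 2$ by exactly the argument used in the first paragraph of the proof of Lemma~\ref{inner-bound1}: $\mathrm{dim}(\partial T)\ge 2$ since $\partial T$ cuts $\R^3$; Lemma~\ref{lem:KurTopDim}\eqref{lem:KurTopDim2} applied to the finite cover~\eqref{boun_1} yields $\mathrm{dim}(\B_\gamma)\ge 2$ for some $\gamma\in\nS$; and strong connectedness of $G(\nS)$ together with Lemma~\ref{lem:KurTopDim}\eqref{lem:KurTopDim1} propagates this to every $\gamma\in\nS$. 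On the other hand, since $X\subset \B_\beta$ we have $X\cap \B_\alpha \subset \B_\alpha\cap \B_\beta = \B_{\alpha,\beta}$, and $\B_{\alpha,\beta}$ is homeomorphic to an arc by Proposition~\ref{lem:Main1.loop}, so $\mathrm{dim}(X\cap \B_\alpha)\le 1$ by Lemma~\ref{lem:KurTopDim}\eqref{lem:KurTopDim1}. Therefore $X$ cannot be contained in $\B_\alpha$, and any $y\in X\setminus \B_\alpha$ furnishes the desired approximating point in $\partial T\setminus \B_\alpha$ within distance $\varepsilon$ of $x$.

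The main obstacle is really just keeping the dimension bookkeeping clean; all the required ingredients (the strong connectedness of $G(\nS)$, the arc structure of the $3$-fold intersections, and the dimension bound $\mathrm{dim}(\B_\gamma)\ge 2$ for every $\gamma\in\nS$) are already available from the preceding results, so no new topological machinery should be needed.
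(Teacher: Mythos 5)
Your proof is correct and rests on the same core fact as the paper's: near any point of $\B_{\alpha,\beta}$ the set $\B_\beta$ has topological dimension at least $2$ while $\B_\alpha\cap\B_\beta=\B_{\alpha,\beta}$ is only an arc, so points of $\partial T\setminus\B_\alpha$ accumulate there. The paper packages this by contradiction via Lemma~\ref{inner-bound1} (finding a ball of $\partial T$ inside the arc $\B_{\alpha,\beta}$), whereas you argue directly through the subdivision covering of $\B_\beta$; this is only a reorganization of essentially the same argument.
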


\begin{proof}
The ambient space in this proof is $\partial T$. Let $\mathcal{B}(x,\delta)=\{y\in \partial T;\, |x-y|<\delta\}$. The fact that $\partial_{\partial T} \B_{\alpha}\subset L_\alpha$ follows from \eqref{eq:subpart1}. To prove the reverse inclusion, suppose that for some $\beta\in \nS$ with $\{\alpha,\beta\}\in G_2(\nS)$ there exists $x\in \B_{\alpha,\beta}\setminus{\partial_{\partial T}{\B_{\alpha}}}$.
Since $\B_{\alpha,\beta} \subset \B_\alpha$ this implies that $x\in \B_{\alpha}^{\circ}$ and, hence, there exists $\delta >0$ with $\mathcal{B}(x,\delta)\subset \B_{\alpha}$. Since $\B_{\alpha,\beta} \subset \B_\beta$, we also have $x\in \B_{\beta}$ and by Lemma~\ref{inner-bound1} there exists $y\in \B_{\beta}^{\circ}\cap \mathcal{B}(x,\delta)$. Thus there exists $\delta'>0$ such that $\mathcal{B}(y,\delta') \subset \B_{\beta}\cap\mathcal{B}(x,\delta)$.
This implies $\mathcal{B}(y,\delta')\subset \B_{\alpha}\cap \B_{\beta}=\B_{\alpha,\beta}.$
By Proposition~\ref{lem:Main1.loop}~(2), $\B_{\alpha,\beta}$ is an arc for each vertex $\{\alpha,\beta\}$ in $G_2(\nS)$ which cannot contain a ball in $\partial T$ by a dimension theoretical argument analogous to the one in the proof of Lemma~\ref{inner-bound1}. This contradiction finishes the proof.
\end{proof}

Together with Proposition~\ref{lem:Main1.loop}~(1), Lemma~\ref{lem:bdov2} immediately yields the following result.

\begin{proposition}\label{bourdary-divide}
For each $\alpha\in\nS$ the boundary $\partial_{\partial T} \B_{\alpha}$ is a simple closed curve.
\end{proposition}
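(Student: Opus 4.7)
The plan is short because the two key ingredients have essentially already been assembled in the preceding lemmas. First I would invoke Lemma~\ref{lem:bdov2} to identify the boundary $\partial_{\partial T}\B_\alpha$, taken with respect to the subspace topology on $\partial T$, with the set $L_\alpha = \bigcup_{\{\alpha,\beta\}\in G_2(\nS)} \B_{\alpha,\beta}$ that we have been analyzing throughout this section. This reduces a statement about a stratum of the fractal surface $\partial T$ (which is difficult to control directly) to a statement about the one-dimensional set $L_\alpha$, which is much more tractable.

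Second I would apply Proposition~\ref{lem:Main1.loop}~(1), which already asserts that $L_\alpha$ is a simple closed curve for every $\alpha\in\nS$. Substituting this into the identification from the previous step gives exactly the claim of the proposition.

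The real work, of course, lies behind these two invocations rather than in the proposition itself, and the ``hard part'' is entirely upstream. The deepest ingredient is Proposition~\ref{lem:Main1.loop}~(1), whose proof relied on Bing's topological characterization (Proposition~\ref{Bing-curve}) applied to the decreasing sequence of regular partitionings $(\nP_{\alpha,k})_{k\ge 1}$ defined in \eqref{3-partioning}, verified using the circular-chain combinatorics of Lemma~\ref{lem:chain}. Lemma~\ref{lem:bdov2} in turn rested on the density statement $\overline{\B_\alpha^\circ}=\B_\alpha$ (Lemma~\ref{inner-bound1}) together with the dimension-theoretic input from Lemma~\ref{lem:KurTopDim}. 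Given all of this, my proof of the present proposition consists of little more than stringing together the equality $\partial_{\partial T}\B_\alpha=L_\alpha$ with the topological classification of $L_\alpha$, and I do not expect to encounter any additional obstacle at this stage.
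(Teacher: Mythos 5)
Your proposal is correct and is exactly the paper's argument: the result is stated immediately after Lemma~\ref{lem:bdov2} with the remark that it follows by combining that lemma's identification $\partial_{\partial T}\B_\alpha=L_\alpha$ with Proposition~\ref{lem:Main1.loop}~(1). Your accounting of where the real work lies (upstream in Bing's criterion and the dimension-theoretic lemmas) matches the paper's structure as well.
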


\subsection{Topological characterization of $2$-fold intersections and of the boundary of $T$}
We now prove Theorem~\ref{Main-1}~(\ref{Main1.sphere}) and ~(\ref{Main1.disk}). An important ingredient of this proof is the following characterization of a $2$-sphere which is also due to Bing~\cite{Bing51}.

\begin{theorem}[{see~\cite[Theorem~9]{Bing51}}]\label{surface}
A necessary and sufficient condition that a Peano continuum  $\mathcal{X}$ be a $2$-sphere is that one of its decreasing sequences of regular partitionings $\nQ_1, \nQ_2,\dots$ have the following properties (for $k\ge 1$):
\begin{itemize}
\item[(1)] The boundary  of each element of $\nQ_k$ is a simple closed curve.
\item[(2)] The intersection of the boundaries of $3$ elements of $\nQ_k$ contains no arc.
\item[(3)] If $U$ is an element of $\nQ_{k-1}$ (take $U=\mathcal{X} \text{ if } k=1$) the elements of $\nQ_k$ in $U$ may be ordered as $U_1,U_2,\dots, U_n$ so that the boundary of $U_j$
intersects $\partial U\cup \partial U_1\cup\dots \cup \partial U_{j-1}$ in a nondegenerate connected set (for $1\le j \le n$).
\end{itemize}
\end{theorem}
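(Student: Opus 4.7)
This statement is Bing's classical topological characterization of the $2$-sphere from \cite{Bing51}, so any proof sketch essentially retraces Bing's original argument. The necessity direction is the easier half: starting from $S^2$ I would exhibit an explicit decreasing sequence of regular partitionings, for instance by iterated barycentric subdivision of a fixed triangulation. The open $2$-faces of the $k$-th subdivision form $\nQ_k$; their boundaries are polygonal simple closed curves (giving~(1)), any three such faces share at most a common vertex (giving~(2)), and the $2$-faces inside a given parent face can be enumerated by a shelling order so that each new face meets the union of its predecessors along a nondegenerate arc (giving~(3)).

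The sufficiency direction is where the real work lies, and my plan would be to invoke the Zippin--Whyburn characterization: a Peano continuum is homeomorphic to $S^2$ if and only if it is nondegenerate, contains no cut point, and every simple closed curve in it separates it into exactly two domains, each of which has that curve as its entire boundary. First, I would use the partitionings to show $\mathcal{X}$ has no cut point: any point lies in arbitrarily small elements of $\nQ_k$, and condition~(3) supplies a chain of adjacent pieces that ``surrounds'' each point, so removing one point cannot disconnect $\mathcal{X}$. Next, by induction on $k$ I would prove that the closure $\overline{U}$ of every $U\in\nQ_k$ is a topological $2$-cell. The inductive step is a disk-gluing argument: condition~(3) enumerates the finer pieces inside $U$ as $U_1,\ldots,U_n$, where each $\overline{U_j}$ is a $2$-cell by induction and meets $\overline{U_1}\cup\cdots\cup\overline{U_{j-1}}$ along a connected, and hence arc-shaped, subset of its boundary; a standard gluing lemma then makes the union itself a $2$-cell, and an inverse-limit step identifies it with $\overline{U}$.

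Having shown that every partitioning boundary $\partial U$ bounds a $2$-cell on each side, I would extend the separation property from the countable family $\{\partial U\}$ to an arbitrary simple closed curve $J\subset\mathcal{X}$ by approximating $J$ uniformly by cycles in the nerve graphs of successive partitionings, using condition~(2) to ensure that these approximating cycles remain simple. The Zippin--Whyburn theorem then yields $\mathcal{X}\cong S^2$. The main obstacle in this plan is the disk-gluing step: one must guarantee that each $\overline{U_j}$ is attached along an arc rather than along a simple closed curve, for otherwise the union could close up into a sphere summand rather than remaining a disk. This is precisely where condition~(2) is indispensable, because an arc in a triple intersection of boundaries is exactly the obstruction that would let the ``shared'' piece loop back upon itself; excluding such arcs keeps every attachment one-dimensional and one-armed, which is what the gluing lemma requires.
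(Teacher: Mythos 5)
The paper does not prove this statement at all: it is imported verbatim as Theorem~9 of Bing's 1951 paper and used as a black box, so there is no internal argument for you to be compared against. Judged on its own terms, your sketch is a reasonable reconstruction of how Bing's theorem is actually established. The necessity half via barycentric subdivisions of a triangulation of $S^2$ is standard and fine. For sufficiency, reducing to the Zippin--Whyburn/Kline-type characterization of the $2$-sphere is indeed the strategy Bing's school uses, and you correctly identify why condition~(2) is indispensable (it prevents a piece from being attached along more than an arc). That said, what you have is an outline rather than a proof: the two places where Bing's original paper does the real work --- the inductive disk-gluing step (in particular, ruling out that some $\overline{U_j}$ with $j<n$ is attached along its entire boundary circle, and handling the inverse-limit identification of the nested unions with $\overline{U}$) and the passage from the countable family of partition boundaries to an arbitrary simple closed curve $J\subset\mathcal{X}$ --- are stated as intentions without the supporting estimates. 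If the goal were to supply a self-contained proof, those two steps would need to be carried out in detail; if the goal is merely to justify citing Bing, the paper's choice to quote the theorem is the appropriate one.
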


Since we want to apply this theorem to $\partial T$, we start with a sequence of partitionings for $\partial T$. To construct this sequence, for $\alpha^{(0)} \in\nS$, let 
\begin{equation}\label{1-partioning}
\nQ_{k}(\alpha^{(0)})=\{f_{d_1d_2\dots d_{k-1}}(\B_{\alpha^{(k-1)}})^{\circ};\; \alpha^{(0)}\xrightarrow{d_1}\alpha^{(1)}\xrightarrow{d_2}\cdots\xrightarrow{d_{k-1}}\alpha^{(k-1)}\in G(\nS)\} \quad(k\ge 1),
\end{equation}
where the interior is taken w.r.t.\ the subspace topology on $\partial T$. 
Using \eqref{1-partioning} we define the sequence of collections $(\nQ_k)_{k\geq 1}$ by
\begin{equation}\label{2-partioning}
\nQ_{k}=\bigcup_{\alpha\in G(\nS)} \nQ_{k}(\alpha) \quad (k\geq 1).
\end{equation}

\begin{lemma}\label{lem-partioning1}
The sequence $(\nQ_{k})_{k\geq 1}$ in \eqref{2-partioning} is a decreasing sequence of regular partitionings of $\partial T$.
\end{lemma}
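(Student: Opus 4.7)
The plan is to mirror the proof of Lemma~\ref{lem-partioning2}, with $\partial T$ playing the role of $L_\alpha$. I will verify in turn the four ingredients required: pairwise disjointness of elements of $\nQ_k$, density of their union in $\partial T$, regularity (each element equals the interior of its closure), and the refinement property $\nQ_{k+1}\preceq \nQ_k$ together with the vanishing of the maximum diameter.

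For pairwise disjointness let $X_1,X_2\in \nQ_k$ be distinct. Lemma~\ref{inner-bound1} places $\overline{X_1},\overline{X_2}$ in $\mathcal{C}_k^{(1)}$, and Lemma~\ref{lem:piecesresults}(\ref{lem:piecesresults2}) then identifies $\overline{X_1}\cap\overline{X_2}$ with either $\emptyset$ or an affine image of $\B_{\Bold{\beta}}$ for some $\Bold{\beta}\in G_\ell(\nS)$ with $\ell\ge 2$. Combining Proposition~\ref{lem:Main1.loop}(2) with Theorem~\ref{Main-1}(\ref{Main1.point}) and (\ref{Main1.empty})---all already proved---this intersection is either empty, a single point, or an arc, hence of topological dimension at most~$1$. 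I expect this step to be the main obstacle: unlike in Lemma~\ref{lem-partioning2}, where the analogous intersection was always at most a single point and the cheap ``no isolated points in a Peano continuum'' argument sufficed, here $X_1\cap X_2$ is an open subset of $\partial T$ that could potentially sit inside an arc. To rule this out I argue by contradiction: suppose $p\in X_1\cap X_2$ and pick $\delta>0$ with $\{y\in\partial T;\,|p-y|<\delta\}\subset X_1\cap X_2$. Iterating the set equation \eqref{L-foldGra3} enough times to subdivide $\overline{X_1}$ into subtiles of diameter less than $\delta$, the unique subtile $Y$ of $\overline{X_1}$ containing $p$ satisfies $Y\subset \overline{X_1}\cap\overline{X_2}$. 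Since $Y$ is an affine image of some $\B_{\alpha'}$ with $\mathrm{dim}(\B_{\alpha'})\ge 2$ (argued in the proof of Lemma~\ref{inner-bound1} by combining the cut property of $\partial T$ in $\R^3$, strong connectedness of $G(\nS)$, and Lemma~\ref{lem:KurTopDim}), this contradicts $\mathrm{dim}(\overline{X_1}\cap\overline{X_2})\le 1$ via Lemma~\ref{lem:KurTopDim}(\ref{lem:KurTopDim1}), forcing $X_1\cap X_2=\emptyset$.

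The remaining ingredients are routine. Density of $\bigcup_{X\in \nQ_k}X$ in $\partial T$ follows from the cover $\partial T=\bigcup_{\alpha\in\nS}\B_\alpha$ in \eqref{boun_1}, the covering property of $\mathcal{C}_k(\alpha)$ in Lemma~\ref{lem:piecesresults}(\ref{lem:piecesresults1}), and the identity $\overline{X^\circ}=X$ for $X\in \mathcal{C}_k^{(1)}$ from Lemma~\ref{inner-bound1}, which together yield $\partial T=\bigcup_{X\in \nQ_k}\overline{X}$. Regularity $\overline{X}^\circ=X$ for every $X\in \nQ_k$ is then a direct consequence of Lemma~\ref{inner-bound1}. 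The refinement $\nQ_{k+1}\preceq \nQ_k$ is immediate from the set equation \eqref{eq:bgammaseteq}: any walk of length $k$ in $G(\nS)$ extending one of length $k-1$ produces $f_{d_1\dots d_k}(\B_{\alpha^{(k)}})\subset f_{d_1\dots d_{k-1}}(\B_{\alpha^{(k-1)}})$, and passing to interiors in $\partial T$ (using that the corresponding $X'\in \nQ_{k+1}$ is open) gives $X'\subset X$ for a suitable $X\in \nQ_k$. Finally, each $f_d$ is a contraction in a suitable norm induced by $M$, so the maximum diameter of the elements of $\nQ_k$ tends to $0$ as $k\to\infty$, completing the verification.
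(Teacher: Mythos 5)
Your proof is correct and follows essentially the same route as the paper's: density and regularity via Lemma~\ref{inner-bound1} and the set equation, disjointness via the dimension-theoretic contradiction (a nonempty open intersection would contain a subtile of topological dimension at least $2$, while Lemma~\ref{lem:piecesresults}, Proposition~\ref{lem:Main1.loop}~(2) and Theorem~\ref{Main-1}~(\ref{Main1.point}) force the intersection of closures to have dimension at most $1$), and the refinement and diameter statements from the set equation and contractivity of the $f_d$. The extra detail you supply for the disjointness step is exactly the argument the paper compresses into the phrase ``by arguing as in the proof of Lemma~\ref{inner-bound1}''.
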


\begin{proof}
Throughout this proof $\partial T$ is our ambient space. We claim that $\nQ_k$ is a partitioning of $\partial T$ for every $k\geq 1$. To prove this claim fix $k\ge 1$. Firstly, the closure of the union of all elements in $\nQ_k$ is $\partial T$ by Lemma~\ref{inner-bound1}, equation \eqref{boun_1}, and the set equation \eqref{L-foldGra3}.
Secondly, each element of $\nQ_k$ has the form $f_{d_1\dots d_{k-1}}(\B_{\alpha^{(k-1)}})^{\circ}$, and, hence, is open. Thirdly we have to show that the elements of $\nQ_k$ are mutually disjoint. Suppose that this is wrong. Then there exist $f_{d_1\dots d_{k-1}}(\B_{\alpha^{(k-1)}})$ and $f_{d'_1\dots  d'_{k-1}}(\B_{\alpha^{'(k-1)}})$ whose intersection $X$ has nonempty interior.
By arguing as in the proof of Lemma~\ref{inner-bound1} this implies that $\mathrm{dim}(X)\ge 2$. 
However, by Lemma~\ref{lem:piecesresults}~(2), $X$ is a shrinked copy of an $\ell$-fold intersection for some $\ell \ge 2$. More precisely, multiplying $X$ by $M^{k-1}$ and shifting it appropriately we see that $X$ is homeomorphic to $\B_{\Bold{\beta}}$ for some $\Bold{\beta} \in G_\ell(\nS)$ with $\ell \ge 2$. Thus $X$ is an arc or a point by Proposition~\ref{lem:Main1.loop}~(2) and Theorem~\ref{Main-1}~\eqref{Main1.point} and, hence, $\mathrm{dim}(X)\le1$. This contradiction proves mutual disjointness of the elements of $\nQ_k$, and the claim is established. Since the elements of $\nQ_k$ are open sets, $\nQ_k$ is regular for each $k\ge 1$ by Lemma~\ref{inner-bound1}.

Next we will check that $(\nQ_k)_{k\geq 1}$ is a decreasing sequence. First, we show that $\nQ_{k+1}$ is a refinement of $\nQ_k$ for each $k\ge 1$. Indeed, by the set equation \eqref{L-foldGra2} the closure $f_{d_1\dots d_{k-1}d_{k}}(\B_{\alpha^{(k)}})$ of each element of $\nQ_{k+1}$ is contained in the closure
$f_{d_1\dots d_{k-1}}(\B_{\alpha^{(k-1)}})$ of some element of $\nQ_k$. Taking interiors we get the refinement assertion. The maximum of the diameters of the elements of $\nQ_k$ approaches zero because $f_d$ is a contraction for each $d\in \D$. 
\end{proof}

The following result shows that the boundary operator  $\partial_{\partial T}$ commutes with certain affine maps.

\begin{lemma}\label{lem:commute}
Let $\partial T$ be the ambient space and let $f_{d_1\dots d_{k-1}}(\B_\alpha)^\circ  \in \nQ_k$ for some $k\ge 1$. Then 
\begin{equation}\label{eq:commute}
f_{d_1\dots d_{k-1}}(\partial_{\partial T}(\B_\alpha))=
\partial_{\partial T} ( f_{d_1\dots d_{k-1}}(\B_\alpha))=
\partial_{\partial T} ( f_{d_1\dots d_{k-1}}(\B_\alpha)^\circ ). 
\end{equation}
\end{lemma}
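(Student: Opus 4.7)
Set $\phi = f_{d_1\dots d_{k-1}}$ and $X = \phi(\B_\alpha)$. I will dispatch the second equality using that $X$ is regular closed in $\partial T$, and establish the first by proving two inclusions.

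For the second equality $\partial_{\partial T}(X) = \partial_{\partial T}(X^\circ)$, Lemma~\ref{inner-bound1} applied to $X \in \mathcal{C}_k^{(1)}$ gives $X = \overline{X^\circ}$ in $\partial T$, so $X$ is regular closed. A routine point-set computation in the ambient $\partial T$ then yields $\partial_{\partial T}(X) = X\setminus X^\circ = \overline{X^\circ}\setminus X^\circ = \partial_{\partial T}(X^\circ)$.

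For the first equality $\phi(\partial_{\partial T}\B_\alpha) = \partial_{\partial T}(X)$, I use Lemma~\ref{lem:bdov2} to write $\phi(\partial_{\partial T}\B_\alpha) = \phi(L_\alpha) = \bigcup_\beta \phi(\B_{\alpha,\beta})$ over $\{\alpha,\beta\}\in G_2(\nS)$, and verify both inclusions. For $\phi(L_\alpha)\subset\partial_{\partial T}(X)$, given $p = \phi(x)$ with $x\in\B_{\alpha,\beta}$ I produce a sequence $(q_n)\subset\partial T\setminus X$ with $q_n\to p$. Lemma~\ref{inner-bound1} applied to $\B_\beta$ supplies $x_n\in\B_\beta^\circ$ with $x_n\to x$; these $x_n$ lie in $\partial T\setminus\B_\alpha$ since $\B_\beta^\circ$ is disjoint from $\partial_{\partial T}\B_\beta\supset\B_{\alpha,\beta}$, and by injectivity of $\phi$ we have $\phi(x_n)\notin X$. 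Using that $\D$ is a complete set of coset representatives of $\Z^3/M\Z^3$ (as in Lemma~\ref{ABCZ3}), the translate $\phi(T+\beta)$ is a subtile of a unique tile $T+\gamma_\beta$ of the lattice tiling; when $\gamma_\beta\ne 0$ this places $\phi(x_n)\in\phi(\B_\beta)\subset\B_{\gamma_\beta}\subset\partial T$, so $q_n = \phi(x_n)$ works. For the reverse inclusion $\partial_{\partial T}(X)\subset\phi(L_\alpha)$, I adapt Lemma~\ref{lem:bdov2} by replacing the cover $\{\B_\gamma:\gamma\in\nS\}$ by the finer cover $\mathcal{C}_k^{(1)}$: Lemma~\ref{lem:piecesresults}(2), Proposition~\ref{lem:Main1.loop}, and Theorem~\ref{Main-1}(\ref{Main1.point}) together show each nonempty $X\cap Y$ with $Y\in\mathcal{C}_k^{(1)}\setminus\{X\}$ has topological dimension at most $1$, so the same dimension-theoretic argument yields
\[
\partial_{\partial T}(X) = \bigcup_{\substack{Y\in\mathcal{C}_k^{(1)},\, Y\ne X\\ X\cap Y\ne\emptyset}} X\cap Y,
\]
and unwinding Lemma~\ref{lem:piecesresults}(2) together with the identification of the lattice tile containing each of the four constituent subtiles of $X\cap Y$ shows each such intersection is contained in $\phi(\B_{\alpha,\beta})$ for some $\{\alpha,\beta\}\in G_2(\nS)$.

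The main obstacle I foresee is the combinatorial matching step in the reverse inclusion: expressing $X\cap Y = \phi(\B_\alpha)\cap\psi(\B_{\alpha'})$ (with $\psi$ another $(k-1)$-fold composition from $\mathcal{C}_k^{(1)}$) as a subset of some $\phi(\B_{\alpha,\beta})$ by rewriting the four constituent subtiles through the coset representation requires careful bookkeeping. A secondary subtlety is the degenerate case $\gamma_\beta = 0$ in the first inclusion, where $\phi(T+\beta)$ itself lies inside $T$ so $\phi(x_n)$ need not be in $\partial T$; in that case one must locate an alternative approximating sequence coming from another subdivision piece of $\B_{\alpha^{(0)}}$ in $\mathcal{C}_k^{(1)}$ that also passes through $p$.
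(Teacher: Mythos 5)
Your reduction of the second equality to Lemma~\ref{inner-bound1} is exactly what the paper does, and your reverse inclusion $\partial_{\partial T}(X)\subset\phi(L_\alpha)$ is sound: the level-$k$ analogue of Lemma~\ref{lem:bdov2} goes through with the dimension count you cite, and the ``bookkeeping'' you worry about is harmless, since for $X=M^{-k+1}(T+\beta_1)\cap M^{-k+1}(T+\beta_2)$ and $Y=M^{-k+1}(T+\beta_3)\cap M^{-k+1}(T+\beta_4)$ one simply picks $\beta_3\notin\{\beta_1,\beta_2\}$ and pulls back by $\phi$ to land in $\B_{\alpha,\beta_3-\beta_1}$. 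The genuine gap is the case you yourself flag as a ``secondary subtlety'' in the forward inclusion, and it is not secondary: it is the entire difficulty of the lemma, the one illustrated by Figure~\ref{fig:ill}. When $\gamma_\beta=0$, i.e.\ $W':=\phi(T+\beta)\subset T$, your approximating points $\phi(x_n)\in\phi(T)\cap W'$ are intersections of two subtiles both lying inside $T$, so there is no reason for them to lie on $\partial T$ at all, and the sequence $q_n$ you need simply is not produced. This case occurs for a positive proportion of the neighbors $\beta$ of a generic point of $L_\alpha$ (whether $\phi(T+\beta)$ sits in $T$ or in a nonzero lattice translate depends on the digit string $d_1\dots d_{k-1}$ and on $\beta$, and both outcomes are generic), so the forward inclusion is unproved as it stands. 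Your proposed repair --- find another piece of $\mathcal{C}_k^{(1)}$ through $p$ --- is the right instinct but is only gestured at, and proving that such a piece exists and separates points off $X$ is precisely the missing argument.

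The paper closes exactly this case by a dichotomy on the third tile. Writing $U=\phi(T)\subset T$, $V=\phi(T+\alpha)\subset T+\alpha^{(0)}$ with $\alpha^{(0)}\in\nS$, and $W'=\phi(T+\beta)$: if $W'\not\subset T$ then $U\cap W'\subset\partial T$ (your non-degenerate case), while if $W'\subset T$ one uses $V\cap W'\subset(T+\alpha^{(0)})\cap T=\B_{\alpha^{(0)}}\subset\partial T$ instead. In either case the chosen double intersection has subdivision pieces of topological dimension at least $2$ (by the argument of Lemma~\ref{inner-bound1}), whereas the triple intersection $U\cap V\cap W'$ is an arc of dimension $1$ by Proposition~\ref{lem:Main1.loop}; hence every neighborhood of $p$ contains points of that double intersection lying outside $U\cap V=X$, and these lie in $\partial T\setminus X$. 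Adding this dichotomy-plus-dimension step to your forward inclusion would complete the proof and would make your argument essentially coincide with the paper's.
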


\begin{proof}
The second identity in \eqref{eq:commute} is a consequence of Lemma~\ref{inner-bound1}. 
To prove the first identity, let $f=f_{d_1\dots d_{k-1}}$ and $Y= f(\B_\alpha)^\circ$ 
for convenience. As $Y\in \nQ_k$,  
there are $\beta,\beta'\in \Z^3$ such that 
$Y = (U \cap V)^\circ$ 
with $U:=M^{-k+1}(T+\beta) \subset T$ and $V:=M^{-k+1}(T+\beta') \subset T +\alpha'$ for some $\alpha'\in\nS$. Then $U=f(T)$ and $Y \subset \partial U$. 
Since $f$ is a homeomorphism satisfying $f(\partial T) = \partial U$, 
we have $f(\partial_{\partial T}(\B_\alpha))  = \partial_{\partial U} (f(\B_\alpha))=\partial_{\partial U} \overline{Y}$. 
Thus  to prove the first identity in \eqref{eq:commute} we have to show 
\begin{equation}\label{eq:UY}
\partial_{\partial U}\overline{Y} = \partial_{\partial T}\overline{Y}.
\end{equation}

\begin{figure}[h]
\includegraphics[width=0.5\textwidth]{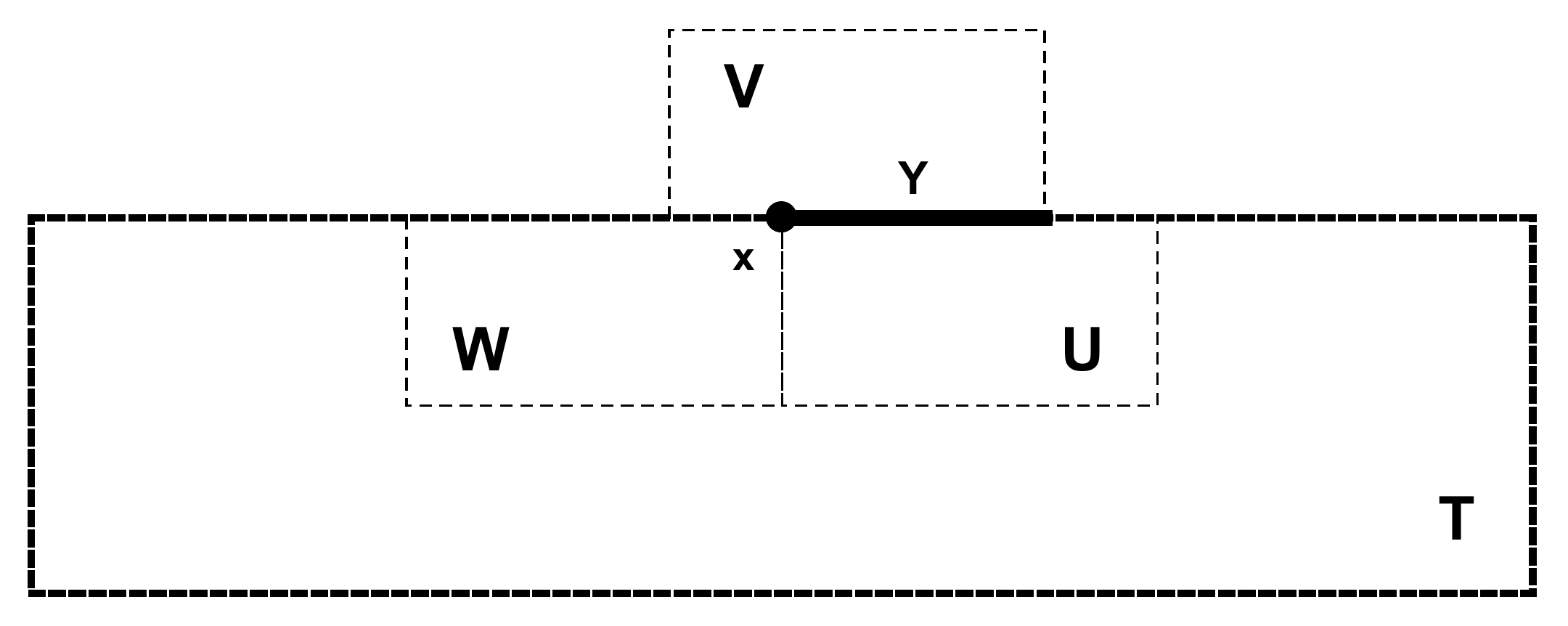}
\caption{If we take a small neighborhood $\mathcal{N}$ of $x$ we see that $\mathcal{N} \cap \partial T$ is different from $\mathcal{N} \cap \partial U$. This is what causes the difficulties in the proof. Note that $T,U,V,W$ are 3-dimensional objects, $Y$ is $2$-dimensional and $x$ is an arc. So this figure is just a schematic illustration of what is going on in a ``slice'' of $T$.
 \label{fig:ill}}
\end{figure}

Suppose first that $x\in \partial_{\partial U}\overline{Y}$. Then in each $\R^3$-neighborhood of $x$ there is a point $x'$ with $x' \in Y \subset \partial T$. On the other hand, by Lemma~\ref{lem:bdov2} 
(shifted by $\beta$ and multiplied by $M^{-k+1}$) 
there is $\gamma\in \Z^3\setminus\{\beta,\beta'\}$ such that $x\in W:=M^{-k+1}(T+\gamma)$ (see Figure~\ref{fig:ill} for an illustration). Summing up, we have $x \in U\cap V\cap W = M^{-k+1}(\B_{\beta'-\beta,\gamma-\beta}+\beta)$. 
Assume that $W
\subset T$ (the contrary case is treated in the same way), then $V\cap W = M^{-k+1}(\B_{\gamma-\beta'} +\beta')  \subset \partial T$.  Each element of any subdivision of  $V \cap W$ has topological dimension at least $2$ (see the proof of Lemma~\ref{inner-bound1}), while $U\cap V \cap W$ is an arc by Proposition~\ref{lem:Main1.loop}~(2) and, hence, has topological dimension $1$. Thus we can find an element
$
x''\in
V\cap W \setminus U\cap V\cap W  
=
V\cap W \setminus U\cap V 
\subset \partial T \setminus \overline{Y}
$
in each $\R^3$-neighborhood of $x$ and, hence, $x \in \partial_{\partial T} \overline{Y}$. 

Suppose now that $x\in \partial_{\partial T}\overline{Y}$. Then each $\R^3$-neighborhood of $x$ there is a point $x'$ with $x' \in Y \subset \partial U$. On the other hand, by \eqref{eq:bdCovGen} there is $\gamma\in \Z^3\setminus\{\beta,\beta'\}$ such that $x \in U\cap V\cap W=M^{-k+1}(\B_{\beta'-\beta,\gamma-\beta}+\beta)$ with $W:=M^{-k+1}(T+\gamma)$.  Since $U\cap W = M^{-k+1}(\B_{\gamma-\beta} +\beta)\subset \partial U$ and each element of any subdivision of $U \cap W$ has topological dimension at least $2$, while 
$U\cap V\cap W$ has topological dimension $1$, as before we can find an element $x'' \in \partial U \setminus \overline{Y}$ in each $\R^3$-neighborhood of $x$. Thus $x \in \partial_{\partial U} \overline{Y}$. 

Summing up we proved \eqref{eq:UY} and the result is established.
\end{proof}

We can now verify the first two conditions of Theorem~\ref{surface} for $(\nQ_k)_{k\ge 1}$.

\begin{lemma}\label{condition-1-2}\mbox{}
\begin{enumerate}
\item\label{condition-1-2.1} The boundary $\partial_{\partial T} X$ is a simple closed curve for each $X\in \nQ_k$ and each $k\ge 1$. 
\item\label{condition-1-2.2} The intersection of the boundary of three elements of $\nQ_k$ contains no arc for each $k\geq 1$.
\end{enumerate}
\end{lemma}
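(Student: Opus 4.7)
Part \eqref{condition-1-2.1} should follow almost immediately. Every $X \in \nQ_k$ has the form $f_{d_1\dots d_{k-1}}(\B_\alpha)^\circ$ for some walk $\alpha^{(0)} \xrightarrow{d_1} \cdots \xrightarrow{d_{k-1}} \alpha$ in $G(\nS)$, and Lemma~\ref{lem:commute} identifies $\partial_{\partial T} X$ with $f_{d_1\dots d_{k-1}}(\partial_{\partial T}\B_\alpha)$. Proposition~\ref{bourdary-divide} tells us that $\partial_{\partial T}\B_\alpha$ is a simple closed curve, and the affine map $f_{d_1\dots d_{k-1}}$ is a homeomorphism of $\R^3$ that therefore carries simple closed curves to simple closed curves. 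One line.

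For part \eqref{condition-1-2.2} I would argue by contradiction, assuming three distinct $X_1, X_2, X_3 \in \nQ_k$ satisfy $\bigcap_{i=1}^3 \partial_{\partial T} X_i \supset \mathcal{A}$ for some arc $\mathcal{A}$. Writing each closure as $\overline{X_i} = M^{-(k-1)}((T+u_i) \cap (T+v_i))$, the arc lies in $\bigcap_i \overline{X_i}$, which is $M^{-(k-1)}$ applied to an intersection of $T$ with at most six translates. Containing an arc rules out larger multiple intersections, since by Theorem~\ref{Main-1}~\eqref{Main1.point} and~\eqref{Main1.empty} any $(\ell+1)$-fold intersection with $\ell \ge 3$ is a point or empty. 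Hence only three distinct translates $a, b, c \in \Z^3$ appear, and the three unordered pairs $\{u_i, v_i\}$ are exactly $\{a,b\}, \{a,c\}, \{b,c\}$ — what I will call a \emph{triangle configuration}.

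The key step, and what I expect to be the main obstacle, is to establish the following dichotomy for any $\overline{X} = M^{-(k-1)}((T+u)\cap(T+v)) \in \mathcal{C}_k^{(1)}$: \emph{exactly one} of $u, v$ is a digit representation of length $k-1$, that is, lies in $\sum_{j=0}^{k-2} M^j \D$. Existence is built into the construction — if $\overline{X} \in \mathcal{C}_k(\alpha^{(0)})$ arises from a walk with labels $d_1,\dots,d_{k-1}$, then $u = \sum_j M^{k-1-j}d_j$ is visibly such a representation, while the edge relations iterated yield $v - M^{k-1}\alpha^{(0)} = u'$ for another digit representation $u'$, so $M^{-(k-1)}(T+v) \subset T+\alpha^{(0)}$. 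Uniqueness is where measure theory enters: if both $u$ and $v$ were digit representations, then $M^{-(k-1)}(T+v)$ would simultaneously sit inside $T$ and inside $T+\alpha^{(0)}$, hence inside $\B_{\alpha^{(0)}}$. But by the tiling property in Lemma~\ref{ABCZ3}, $\B_{\alpha^{(0)}}$ has Lebesgue measure zero, whereas $M^{-(k-1)}(T+v)$ has measure $C^{-(k-1)}\mu_3(T) > 0$, a contradiction.

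With the dichotomy in hand, a clean parity count kills the triangle configuration. Each of the three pairs $\{a,b\}, \{a,c\}, \{b,c\}$ contributes exactly one digit representation, so the total number of (pair, digit-representation) incidences equals $3$. On the other hand each of $a, b, c$ appears in precisely two of the three pairs and therefore contributes either $0$ or $2$ to this total, depending on whether it is itself a digit representation. Consequently the total lies in $\{0, 2, 4, 6\}$ and can never equal $3$ — the desired contradiction. This shows $\bigcap_i \partial_{\partial T} X_i$ contains no arc, completing~\eqref{condition-1-2.2}.
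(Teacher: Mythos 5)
Your part~(1) coincides with the paper's argument: Lemma~\ref{lem:commute} identifies $\partial_{\partial T}X$ with an affine image of $\partial_{\partial T}\B_\alpha$, and Proposition~\ref{bourdary-divide} finishes it. For part~(2) you take a genuinely different route. The paper notes that for $k=1$ the triple intersection of boundaries sits inside $\B_{\alpha,\beta,\gamma}$, hence is at most a point, and then handles $k\ge 2$ in one sentence by asserting that triple intersections of boundaries of elements of $\nQ_k$ are affine images of triple intersections of boundaries of elements of $\nQ_1$; that reduction is immediate when the three pieces share the same prefix word $d_1\dots d_{k-1}$, but in general one must exclude exactly the \emph{triangle configuration} you isolate, where the six translates collapse to three lattice points $a,b,c$ and $\overline{X_1}\cap\overline{X_2}\cap\overline{X_3}$ is an affine copy of a $3$-fold intersection, i.e.\ possibly an arc. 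Your dichotomy --- exactly one member of each pair $\{u_i,v_i\}$ lies in $\D+M\D+\cdots+M^{k-2}\D$, with existence read off from the walk representation and uniqueness from comparing the positive measure of $M^{-(k-1)}(T+v)$ with $\mu_3(\B_{\alpha^{(0)}})=0$ --- combined with the parity count ($3$ incidences versus an even total) eliminates this configuration. I checked the details (the identity $v=M^{k-1}\alpha^{(0)}+u'$, the injectivity of $X\mapsto\overline{X}$ on $\nQ_k$ guaranteeing three distinct pairs, and the reduction of the $\ge 4$-translate case to Theorem~\ref{Main-1}~(\ref{Main1.point}) and~(\ref{Main1.empty})) and they hold. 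So your argument is correct and in fact makes explicit a step the paper's reduction to $\nQ_1$ leaves implicit; the price is an appeal to the tiling/measure-zero-overlap property, which the paper's shorter argument does not need to invoke.
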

\begin{proof}

To prove assertion \eqref{condition-1-2.1} let $X \in \nQ_k$. Then
$\partial_{\partial T}X$ is an affine copy of $\partial_{\partial T}\B_\alpha$ for some $\alpha\in \nS$ by Lemma~\ref{lem:commute}. The assertion follows because $\partial_{\partial T}\B_\alpha$ is a simple closed curve by  Proposition~\ref{bourdary-divide}. 

To prove assertion \eqref{condition-1-2.2} we note that 
$
\partial_{\partial T}{\B_{\alpha}^{\circ}}\cap \partial_{\partial T}{\B_{\beta}^{\circ}}\cap \partial_{\partial T}{\B_{\gamma}^{\circ}} \subset \B_{\alpha} \cap \B_{\beta} \cap \B_{\gamma} = \B_{\alpha,\beta,\gamma},
$
so the intersection of the boundary of three elements of $\nQ_1$  contains at most one point because $\B_{\alpha,\beta,\gamma} $ contains at most one point. The same is true for $\nQ_k$ because triple intersections of boundaries of elements in $\nQ_k$ are just affine images of triple intersections of boundaries of elements in $\nQ_1$.
\end{proof}

It remains to verify the third condition of Theorem~\ref{surface}. 

For a fixed $\B_{\alpha}$, $\alpha\in \nS$, it is easy to determine the {\em neighbors of $\B_{\alpha}$ in\ $\partial T$}, {\it i.e.}, the elements $\B_{\beta}$ with $\B_{\alpha,\beta}\not=\emptyset$. Indeed, in view of Lemma~\ref{lem:bdov2} we know the neighbors of $\B_{\alpha}$ in $\partial T$ immediately from the right side of the identities in \eqref{eq:circchainLgamma}. This information allows to construct the Hata graph of $\{\B_{\alpha};~\alpha\in \nS\}$ which we denote by $H(\nS)$. This graph is depicted in Figure~\ref{HataGraph}.

\begin{remark}\label{rem:crystal}
The graph in Figure~\ref{HataGraph} shows again a relation to polyhedral geometry. The dual polyhedron of the truncated octahedron is the so-called {\it tetrakis hexahedron} (a {\it Catalan polyhedron}, see Conway {\it et al.}~\cite[p.~284]{CBG:08}; Tappert and Tappert~\cite[p.~26]{TT:11} mention its relevance in the crystallography of rough diamonds). The graph in the figure is the {\it tetrakis hexahedral graph}, the graph of vertices and edges of this polyhedron.
\end{remark}

\begin{figure}[h]
\begin{minipage}{\textwidth}
\begin{minipage}[c]{.7 \textwidth}
\includegraphics[width=0.8 \textwidth]{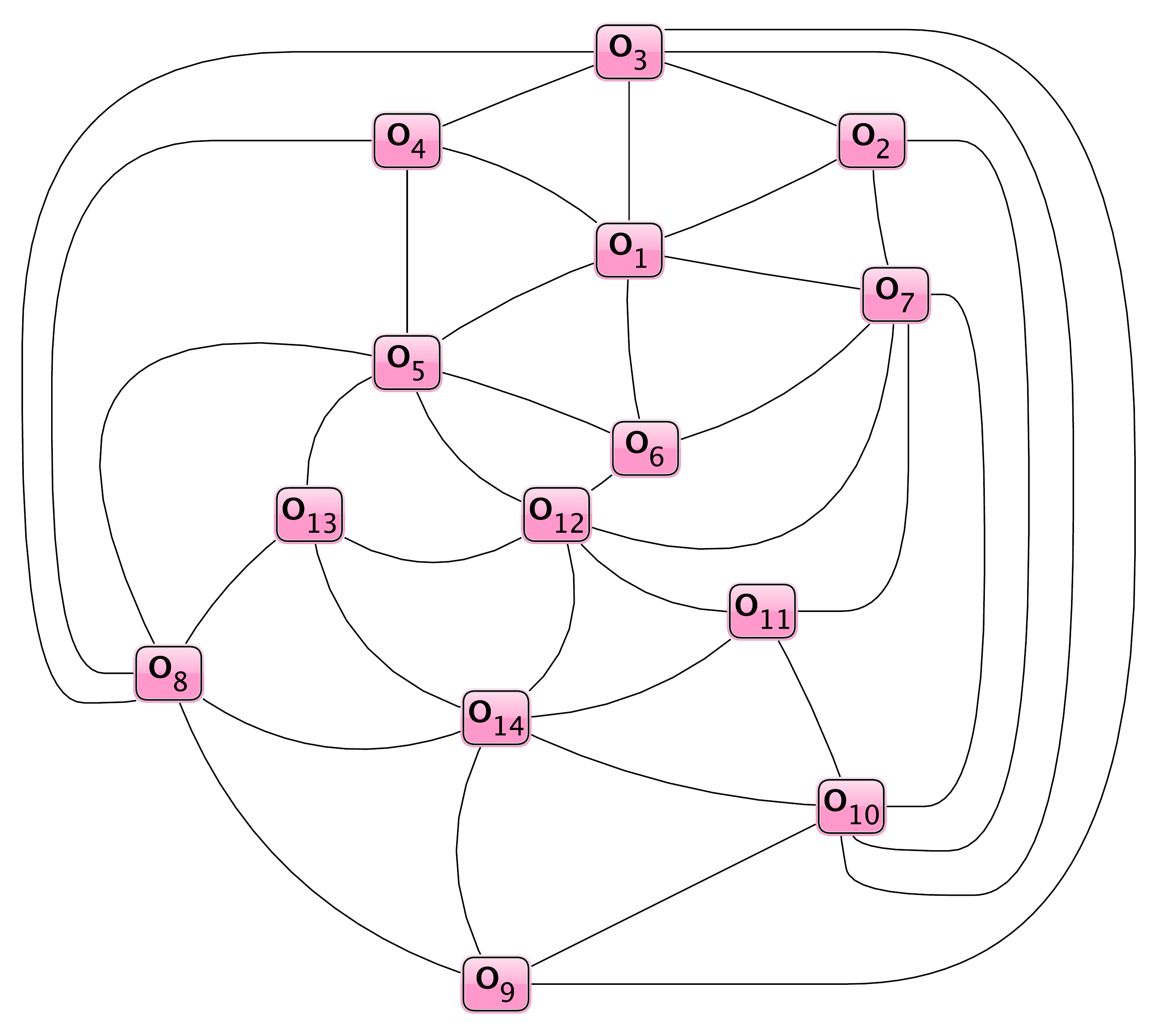}
\end{minipage}
\begin{minipage}[c]{.05 \textwidth}
\begin{tabular}{|l|c|c|l|}
\hline 
Vertex & ``O-notation''\\
\hline 
$\B_{\{\overline{Q-P}\}}$& $\BO_1$\\
\hline
$\B_{\{{N-Q+P}\}}$& $\BO_2$\\
\hline
$\B_{\{{N-Q}\}}$& $\BO_3$\\
\hline
$\B_{\overline{Q}}$& $\BO_4$\\
\hline
$\B_{\overline{N}}$& $\BO_5$\\
\hline
$\B_{\{\overline{N-P}\}}$& $\BO_6$\\
\hline
$\B_{{P}}$& $\BO_7$\\
\hline
$\B_{\overline{P}}$& $\BO_8$\\
\hline
$\B_{\{N-P\}}$& $\BO_9$\\
\hline
$\B_{{N}}$& $\BO_{10}$\\
\hline
$\B_{{Q}}$& $\BO_{11}$\\
\hline
$\B_{\{\overline{N-Q}\}}$& $\BO_{12}$\\
\hline
$\B_{\{\overline{N-Q+P}\}}$& $\BO_{13}$\\
\hline
$\B_{\{Q-P\}}$& $\BO_{14}$\\
\hline
\end{tabular}
\end{minipage}
\caption{The Hata graph $H(\nS)$ which is known as the {\it tetrakis hexahedral graph} and the table which determines the order of the vertices $\{\B_{\alpha};\,\alpha\in \nS\}$.\label{HataGraph}}
\end{minipage}
\end{figure}
We give an order to the $2$-fold intersections $\B_\alpha$ by setting $\BO_i:= \B_{\alpha_i}$ ($1\le i\le 14$) according to the right side of Figure~\ref{HataGraph}. We have the following lemma.

\begin{lemma}\label{check-partition-1}
Let $\partial T$ be our ambient space. Then  
$
 \partial \BO_i\cap (\partial \BO_1\cup \partial \BO_2\cup\dots\cup \partial \BO_{i-1})
$
is a nondegenerate connected set for each $i\in \{2,\ldots,14\}$.
\end{lemma}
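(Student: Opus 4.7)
The plan is to exploit the topological machinery already established: by Lemma~\ref{lem:bdov2} we have $\partial_{\partial T}\BO_i=L_{\alpha_i}$, which by Proposition~\ref{lem:Main1.loop}(1) is a simple closed curve. For any pair $i\neq j$,
\[
\partial_{\partial T}\BO_i\cap \partial_{\partial T}\BO_j = L_{\alpha_i}\cap L_{\alpha_j} = \B_{\alpha_i,\alpha_j},
\]
which by Proposition~\ref{lem:Main1.loop}(2) is either empty (when $\{\alpha_i,\alpha_j\}\notin G_2(\nS)$) or a single arc whose two endpoints are $4$-fold intersection points (cf.~Lemma~\ref{lem:2fourfold} and equation~\eqref{eq:incl23bf}). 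Consequently $L_{\alpha_i}$ is combinatorially decomposed as the circular chain of arcs indexed by the neighbors of $\BO_i$ in the Hata graph $H(\nS)$; this decomposition is precisely the one written down in \eqref{eq:circchainLgamma} (up to the obvious symmetry $\alpha\leftrightarrow -\alpha$).

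The proof itself is a finite case-by-case verification over $i\in\{2,\ldots,14\}$. For each such $i$ I would first read off from $H(\nS)$ in Figure~\ref{HataGraph} the predecessor set
\[
N_i := \bigl\{\,j\in\{1,\ldots,i-1\} \;;\; \{\alpha_i,\alpha_j\}\in G_2(\nS)\,\bigr\},
\]
and then use the explicit cyclic ordering of arcs on $L_{\alpha_i}$ given by \eqref{eq:circchainLgamma} to identify which arcs of $L_{\alpha_i}$ correspond to the elements of $N_i$. The assertion to be checked is that these arcs are \emph{consecutive} on the simple closed curve $L_{\alpha_i}$; if so, their union is either a proper arc of $L_{\alpha_i}$ (when $i<14$) or the full circle $L_{\alpha_i}$ (when $i=14$), and in both cases is a nondegenerate connected set, as required.

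The main obstacle is purely combinatorial: one must confirm that the chosen ordering $\BO_1,\ldots,\BO_{14}$ is compatible with the face-adjacency structure of the tetrakis hexahedral graph, in the sense that at every stage the already-added faces surrounding $\BO_i$ form a connected arc of its boundary cycle. Inspecting Figure~\ref{HataGraph} one sees that the ordering begins with the ``top'' hexagonal face $\BO_1$, then sweeps around it through its three hexagonal neighbors $\BO_2,\BO_3,\BO_4$ (each meeting $\BO_1$ in an arc and consecutive additions meeting each other along further arcs of $L_{\alpha_1}$), then brings in the adjacent ``upper equatorial'' faces, and finally closes up with $\BO_{14}$, whose boundary $L_{\alpha_{14}}$ is entirely covered by its predecessors. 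No calculation beyond reading the graph and the circular chains in \eqref{eq:circchainLgamma} is required; the whole argument reduces to $13$ routine inspections.
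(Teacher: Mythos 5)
Your proposal is correct and follows essentially the same route as the paper: both reduce $\partial_{\partial T}\BO_i\cap(\partial_{\partial T}\BO_1\cup\dots\cup\partial_{\partial T}\BO_{i-1})$ via Lemma~\ref{lem:bdov2} to the union of the arcs $\B_{\alpha_i,\alpha_j}$ over predecessors $j$, and then verify by a finite inspection that these arcs are pairwise linked through nonempty $4$-fold intersections (equivalently, that they are consecutive in the circular chain \eqref{eq:circchainLgamma}), with nondegeneracy coming from Proposition~\ref{lem:Main1.loop}(2). The only difference is that the paper actually writes out the thirteen unions explicitly and checks them against $G_3(\nS)$, whereas you describe the check without carrying it out.
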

\begin{proof}
Let $\BO_{j,k} := \BO_j \cap \BO_k$ ($1\le j,k \le 14$). First, by Lemma~\ref{lem:bdov2} we have 
$$
\BA_i :=\partial \BO_i\cap (\partial \BO_1\cup \partial \BO_2\cup\dots\cup \partial \BO_{i-1})=\BO_{i,1}\cup \BO_{i,2}\cup\dots\cup \BO_{i,i-1}.
$$
From the Hata graph $H(\nS)$, we can read off which of the sets $\BO_{j,k}$ is nonempty. Together with the table in Figure \ref{HataGraph} this information leads to the following identities.
\begin{align*}
\BA_2 & = \BO_{2,1}= \B_{\mybinom{\overline{Q-P}}{N-Q+P} },\\
\BA_3&=\BO_{3,1}\cup  \BO_{3,2}=\B_{\mybinom{\overline{Q-P}}{N-Q}} \cup \B_{\mybinom{N-Q}{N-Q+P} },\\
\BA_4&=\BO_{4,1}\cup  \BO_{4,2}\cup  \BO_{4,3}=\BO_{4,1}\cup \BO_{4,3}=\B_{\mybinom{\overline{Q}}{N-Q} } \cup \B_{\mybinom{\overline{Q}}{\overline{Q-P}}}, \\
\BA_5&=\BO_{5,1}\cup  \dots \cup \BO_{5,4}=\BO_{5,4}\cup \BO_{5,1}=\B_{\mybinom{\overline{N}}{\overline{Q}} }\cup \B_{\mybinom{\overline{N}}{\overline{Q-P}} },\\
\BA_6&= \BO_{6,1}\cup\dots\cup\BO_{6,5}=\BO_{6,1}\cup \BO_{6,5} =\B_{\mybinom{\overline{N-P}}{\overline{Q-P}} }\cup \B_{\mybinom{\overline{N}}{\overline{N-P}} },\\
\BA_7&= \BO_{7,1}\cup\dots\cup\BO_{7,6} =\BO_{7,6}\cup \BO_{7,1}\cup \BO_{7,2}=\B_{\mybinom{\overline{N-P}}{P} }\cup \B_{\mybinom{\overline{Q-P}}{P}}  \cup \B_{\mybinom{P}{N-Q+P}},\\
\BA_8&=\BO_{8,1}\cup\dots\cup\BO_{8,7}=\BO_{8,3}\cup \BO_{8,4}\cup \BO_{8,5}=\B_{\mybinom{\overline{P}}{N-Q}}\cup \B_{\mybinom{\overline{P}}{\overline{Q}}}\cup \B_{\mybinom{\overline{P}}{\overline{N}}},\\
\BA_9&=\BO_{9,1}\cup\dots\cup\BO_{9,8}=\BO_{9,3}\cup \BO_{9,8}=\B_{\mybinom{N-P}{N-Q}}\cup \B_{\mybinom{N-P}{\overline{P}}},\\
\BA_{10}&=\BO_{10,1}\cup\dots\cup\BO_{10,9}=\BO_{10,7}\cup \BO_{10,2}\cup \BO_{10,3}\cup \BO_{10,9}
\\&
=\B_{\mybinom{N}{P}}\cup \B_{\mybinom{N}{N-Q+P}}\cup \B_{\mybinom{N}{N-Q}}\cup \B_{\mybinom{N}{N-P}},\\
\BA_{11}&=\BO_{11,1}\cup\dots\cup\BO_{11,10}=\BO_{11,7}\cup \BO_{11,10}=\B_{\mybinom{Q}{P}}\cup \B_{\mybinom{Q}{N}},\\
\BA_{12}&=\BO_{12,1}\cup\dots\cup\BO_{12,11}=\BO_{12,5}\cup \BO_{12,6}\cup \BO_{12,7}\cup \BO_{12,11}
\\&
=\B_{\mybinom{\overline{N-Q}}{\overline{N}}}\cup \B_{\mybinom{\overline{N-Q}}{\overline{N-P}}}\cup \B_{\mybinom{\overline{N-Q}}{{P}}}\cup \B_{\mybinom{\overline{N-Q}}{Q}},\\
\BA_{13}&=\BO_{13,1}\cup\dots\cup\BO_{13,12}=\BO_{13,8}\cup \BO_{13,5}\cup \BO_{13,12}=\B_{\mybinom{\overline{N-Q+P}}{\overline{P}}}\cup \B_{\mybinom{\overline{N-Q+P}}{\overline{N}}}\cup \B_{\mybinom{\overline{N-Q+P}}{\overline{N-Q}}},\\
\BA_{14}&=\BO_{14,1}\cup\dots\cup\BO_{14,13}=\partial \BO_{14}.
\end{align*}
We can now read off the graph $G_3(\nS)$ that $\BA_{i}$ is connected for each $2\le i \le 14$. The fact that it is nondegenerate follows because each $3$-fold intersection is an arc by Proposition~\ref{lem:Main1.loop}~(2).
\end{proof}

Note that $\partial_{\partial T} \BO_j^{\circ}=\partial_{\partial T} \BO_j$ for $j\in\{1,\ldots,14\}$ by Lemma~\ref{lem:commute} and $\partial_{\partial T}\partial T = \emptyset$. Thus Lemma~\ref{check-partition-1} implies that $(\nQ_k)_{k\geq 1}$ satisfies condition (3) of Theorem~\ref{surface} for the case $k=1$ (here we set $\nQ_0=\{\partial T\}$). 

To show that Theorem~\ref{surface}~(3) is true for $k\ge 2$, we need the following results on intersections.

\begin{lemma}\label{intersect-1}
Let $\alpha\in \nS$, $1\le j \le C-1$, and $j\leq i\leq C-1$, then 
\begin{itemize}
\item[(1)]
$\big(\B_{\alpha}+(i-j)P)\big)\cap \big(\B_{\alpha}+iP\big)=\emptyset$ and
\item[(2)]
$\big(\B_{\alpha}+(i-j)P)\big)\cap \big(\B_{\alpha+P}+iP\big)=\emptyset.$
\end{itemize}
\end{lemma}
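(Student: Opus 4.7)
My plan is to reduce both assertions, via the translation $x \mapsto x - (i-j)P$, to emptiness statements for certain $4$-fold intersections of translates of $T$. The guiding principle I will use is the elementary observation that $(T + u_k) \cap (T + u_\ell) = \emptyset$ as soon as $u_k - u_\ell \notin \nS \cup \{0\}$, together with the following consequence of the explicit list of the $14$ elements of $\nS$ read off from Figure~\ref{Gamma2_1}: among all multiples $mP$ with $m \in \Z$, only $0, \pm P$ lie in $\nS \cup \{0\}$.

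For part~(2), the translated intersection equals $T \cap (T+\alpha) \cap (T+jP) \cap (T + \alpha + (j+1)P)$. The pair of translates $T + \alpha$ and $T + \alpha + (j+1)P$ differs by $(j+1)P$, and since $j \ge 1$ forces $j+1 \ge 2$, this difference is not in $\nS \cup \{0\}$. Hence this pair is already disjoint and the full intersection is empty.

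For part~(1) the translated intersection becomes $T \cap (T+\alpha) \cap (T+jP) \cap (T + \alpha + jP)$. When $j \ge 2$ the pair $T$, $T + jP$ differs by $jP \notin \nS \cup \{0\}$, again yielding emptiness. The only substantive case is $j = 1$, where all pairwise differences among the four translates other than $\alpha + P$ and $\alpha - P$ are already known to lie in $\nS \cup \{0\}$. I plan to run through the list of the $14$ elements of $\nS$, halved by the $\alpha \leftrightarrow -\alpha$ symmetry, and verify that for each $\alpha \in \nS$ at least one of $\alpha + P$, $\alpha - P$ fails to be an element of $\nS \cup \{0\}$; the verification for the degenerate choices $\alpha = \pm P$ amounts to noting that $\pm 2P \notin \nS \cup \{0\}$, while for every other representative one simply checks that the coordinates of the resulting vector do not match those of any element of $\nS$.

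The main obstacle is this single case $j = 1$ of part~(1): it is the only place where the generic ``large multiple of $P$ is not in $\nS$'' shortcut fails and a finite case distinction over the neighbors of $T$ becomes necessary. The distinction is short, though, because the symmetry of $\nS$ cuts the work in half and because for most $\alpha \in \nS$ the vector $\alpha \pm P$ differs from every neighbor already in its first coordinate.
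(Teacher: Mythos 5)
Your proof is correct, and it starts from the same reduction as the paper: translating by $-(i-j)P$ turns both intersections into $4$-fold intersections of integer translates of $T$ (namely $T\cap(T+\alpha)\cap(T+jP)\cap(T+\alpha+jP)$ and $T\cap(T+\alpha)\cap(T+jP)\cap(T+\alpha+(j+1)P)$). Where you diverge is in how emptiness is certified. The paper simply observes that the relevant triple $\{\alpha,jP,jP+\alpha\}$ (resp.\ $\{\alpha, jP, (j+1)P+\alpha\}$) is not a vertex of the precomputed graph $G_3(\nS)$ of Figure~\ref{quad-graph} and invokes the classification of $4$-fold intersections; your argument never touches $G_3(\nS)$ and instead exhibits, in each case, a single pair of the four translates whose difference already fails to lie in $\nS\cup\{0\}$, using only the explicit list of the $14$ neighbors and the fact that $mP\in\nS\cup\{0\}$ forces $m\in\{0,\pm1\}$. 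This buys two things: the argument is more self-contained (it does not presuppose the computation of the $24$-vertex graph $G_3(\nS)$, only of $\nS$ itself), and it handles cleanly the degenerate instances $\alpha=\pm P$, $j=1$, where $\{\alpha,jP,jP+\alpha\}$ collapses to a set with fewer than three nonzero elements and the paper's one-line appeal to $G_3(\nS)$ is, strictly speaking, a slight gloss. The price is the finite case distinction over $\alpha\in\nS$ in the case $j=1$ of part~(1), which you correctly identify as the only nontrivial step and which does check out: for each of the seven representatives modulo the symmetry $\alpha\leftrightarrow-\alpha$, at least one of $\alpha\pm P$ differs in some coordinate from every element of $\nS\cup\{0\}$.
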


\begin{proof}
Shifting by $-(i-j)P$, we see that $\big(\B_{\alpha}+(i-j)P\big)\cap \big(\B_{\alpha}+iP\big)$
is homeomorphic to $\B_{\alpha,jP,jP+\alpha}$. Looking at Figure~\ref{quad-graph}, we see that $\{\alpha,jP,jP+\alpha\}$ is not a vertex of $G_3(\nS)$. Thus Theorem~\ref{Main-1}~\eqref{Main1.point} yields (1). The second assertion follows in a similar way.
\end{proof}

\begin{lemma}\label{intersect-2}
If $\alpha\in\{Q,~ N,~N-Q+P,~\overline{N-Q},~\overline{Q-P},~\overline{N-P}\}$, then 
\begin{itemize}
\item[(1)]
$\B_{\alpha}\cap \big(\B_{\alpha-P}+P\big)\neq \emptyset$ and
\item[(2)]
$\B_{\alpha}\cap  \B_{\alpha-P} \neq \emptyset$.
\end{itemize}
\end{lemma}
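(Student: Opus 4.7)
The plan is to rewrite each of the two intersections as a $3$-fold intersection $\B_{\beta_1,\beta_2}$ and then to read off non-emptiness directly from Table~\ref{tab:triple-graph}, which lists the vertices of $G_2(\nS)$. For~(1), the identity $\B_{\alpha-P}+P=(T+P)\cap(T+\alpha)$ gives
\[
\B_{\alpha}\cap\bigl(\B_{\alpha-P}+P\bigr)=T\cap(T+P)\cap(T+\alpha)=\B_{P,\alpha};
\]
for~(2), expanding the definition directly gives
\[
\B_{\alpha}\cap\B_{\alpha-P}=T\cap(T+\alpha)\cap(T+\alpha-P)=\B_{\alpha,\alpha-P}.
\]
By Proposition~\ref{Char-boundary}, each such $3$-fold intersection is non-empty if and only if the corresponding two-element set is a vertex of $G_2(\nS)$.

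With this reduction the proof becomes a direct inspection of twelve pairs. I will freely use the central symmetry $\Bold{\alpha}\leftrightarrow-\Bold{\alpha}$ of the table that was noted in Lemma~\ref{3-4-fold}. For~(1), the pairs $\{P,Q\}$, $\{P,N\}$, and $\{P,\overline{N-Q}\}$ appear verbatim as vertices of $G_2(\nS)$, while $\{P,N-Q+P\}$, $\{P,\overline{Q-P}\}$, and $\{P,\overline{N-P}\}$ are the negatives of the listed vertices $\{\overline{N-Q+P},\overline{P}\}$, $\{\overline{P},Q-P\}$, and $\{\overline{P},N-P\}$ respectively. For~(2), the pairs $\{N-Q,N-Q+P\}$ and $\{\overline{Q-P},\overline{Q}\}$ are listed directly, and the remaining four pairs $\{Q,Q-P\}$, $\{N,N-P\}$, $\{\overline{N-Q},\overline{N-Q+P}\}$, and $\{\overline{N-P},\overline{N}\}$ are the negatives of tabulated entries.

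No real obstacle arises here. The entire argument is mechanical once the two algebraic identities above are written down, and the only ingredient beyond those identities is Proposition~\ref{Char-boundary}, which converts the topological statement into a combinatorial lookup in the already-computed graph $G_2(\nS)$.
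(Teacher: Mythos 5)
Your proposal is correct and follows essentially the same route as the paper: both rewrite the two intersections as the $3$-fold intersections $\B_{P,\alpha}$ and $\B_{\alpha,\alpha-P}$ and then verify that these pairs (or their negatives) are vertices of $G_2(\nS)$ in Table~\ref{tab:triple-graph}. The only cosmetic difference is that you deduce non-emptiness from Proposition~\ref{Char-boundary} (together with the fact that $G_2(\nS)$ has no sinks), whereas the paper cites Proposition~\ref{lem:Main1.loop}; both are valid.
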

\begin{proof}
Since $\B_{\alpha}\cap \big(\B_{\alpha-P}+P\big)=\B_{\alpha,P}$ and $\{\alpha,P\}$ is a vertex of $G_2(\nS)$ for each $\alpha\in \{Q,~ N,~N-Q+P,~\overline{N-Q},~\overline{Q-P},~\overline{N-P}\}$ (see Table~\ref{tab:triple-graph}), assertion (1) follows from Proposition~\ref{lem:Main1.loop}. Assertion (2) is proved in the same way.
\end{proof}

With help of these lemmas we can prove that the subdivisions of $\B_\alpha$ have a linear order.

\begin{corollary}\label{cor:intersect3}
Each $2$-fold intersection $\B_{\alpha}$, $\alpha\in\nS$, can be generated by the following ordered set equations (we only need to give the equations for the following $7$ elements of $\nS$ by symmetry). 
\begin{equation}\label{intersect-3}
\begin{split}
M\B_P&\circeq \Big(\bigcup_{i=0}^{C-A-1}(\B_{Q-P}\cup\B_Q)+iP\Big)\cup \big(\B_{Q-P}+(C-A)P\big),\\
M\B_Q&\circeq\Big(\bigcup_{i=0}^{C-B-1}(\B_{N-P}\cup \B_N)+iP\Big)\cup \big(\B_{N-P}+(C-B)P\big),\\
M\B_N&\circeq \B_{\overline{P}},\\
M\B_{Q-P}&\circeq\Big(\bigcup_{i=0}^{C-B+A-2}(\B_{N-Q}\cup\B_{N-Q+P})+iP\Big)\cup (\B_{N-Q}+(C-B+A-1)P),\\
M\B_{N-Q+P}&\circeq\Big(\bigcup_{i=0}^{B-A-1}(\B_{\overline{N-Q+P}}\cup \B_{\overline{N-Q}})+iP\Big)\cup (\B_{\overline{N-Q+P}}+(B-A)P),\\
M\B_{N-P}&\circeq\Big(\bigcup_{i=0}^{A-2}(\B_{\overline{Q}}\cup\B_{\overline{Q-P}})+iP\Big)\cup (\B_{\overline{Q}}+(A-1)P),\\
M\B_{N-Q}&\circeq\Big(\bigcup_{i=0}^{B-2}(\B_{\overline{N}}\cup B_{\overline{N-P}})+iP\Big)\cup (\B_{\overline{N}}+(B-1)P).\\
\end{split}
\end{equation}
Here we use ``$\circeq$'' to emphasize that the union on the right hand side is given by the order indicated in Figure~\ref{fig:orderfig} and that only the sets being adjacent in this order have nonempty intersection. Each of these intersections is an arc. 
\end{corollary}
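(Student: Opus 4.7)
The plan is to obtain each of the seven displayed identities as a direct consequence of the set equation \eqref{eq:bgammaseteq} applied to the neighbor graph $G(\nS)$, and then to verify the adjacency properties of the listed order using Lemmas~\ref{intersect-1} and~\ref{intersect-2}. The arc statement is then immediate from Proposition~\ref{lem:Main1.loop}~(2).

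First I would read off $M\B_\alpha=\bigcup_{\alpha\xrightarrow{d|d'}\alpha'\in G(\nS)}(\B_{\alpha'}+d)$ from Figure~\ref{Gamma2_1} for each of the seven vertices $\alpha\in\{P,Q,N,Q-P,N-Q+P,N-P,N-Q\}$. For example, for $\alpha=P$ the vertex has exactly two outgoing edges in $G(\nS)$, namely one to $Q-P$ with label set $\{0,1,\ldots,C-A\}$ and one to $Q$ with label set $\{0,1,\ldots,C-A-1\}$ (and all these labels are multiples of $P=(1,0,0)^t$, so ``$d$'' corresponds to ``$iP$'' in the displayed equations). Grouping the translates by the common index $i$ immediately yields the first identity in \eqref{intersect-3}. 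Exactly the same mechanical procedure, reading off the labels in Figure~\ref{Gamma2_1} for the remaining six vertices (and noting that $N\to\overline{P}$ is the single edge with label $C-1$, which is why $M\B_N$ reduces to $\B_{\overline{P}}$ after a shift), produces the other six identities. By the symmetry recalled in Lemma~\ref{3-4-fold}, the analogous equations for the remaining seven elements of $\nS$ are obtained by negation.

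Next I would justify the ``$\circeq$'' claim, i.e.\ that in the indicated order only consecutive subtiles meet. For two subtiles of the same type one applies Lemma~\ref{intersect-1}~(1): any two translates $\B_{\alpha'}+iP$ and $\B_{\alpha'}+jP$ with $|i-j|\ge 1$ are disjoint, so within one column of Figure~\ref{fig:orderfig} no two of the listed pieces meet unless they are equal. For two subtiles of different types in the same row $i$, note that the pair $(\B_{\alpha'_1}+iP,\B_{\alpha'_2}+iP)$ corresponds (after translation by $-iP$) to $\B_{\alpha'_1}\cap\B_{\alpha'_2}$, which by Lemma~\ref{intersect-2}~(2) is nonempty in exactly the cases listed and empty otherwise. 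For two subtiles in consecutive rows $i$ and $i+1$, after shifting by $-iP$ we reduce to the pair $(\B_{\alpha'_1},\B_{\alpha'_2}+P)$, which by Lemma~\ref{intersect-2}~(1) is nonempty precisely for the adjacent pair listed in Figure~\ref{fig:orderfig}, and empty otherwise by Lemma~\ref{intersect-1}~(2). Any remaining pairs (differing in both type and by $|i-j|\ge 2$ in their row index) are handled by the same Lemma~\ref{intersect-1} after a shift, noting that the corresponding combination $\{\alpha'_1,\alpha'_2+kP,\ldots\}$ is not a vertex of $G_2(\nS)$.

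Finally, since every nonempty intersection produced above is of the form $\B_{\alpha'_1,\alpha'_2+jP}$ for some $j\in\{0,1\}$ and some $\{\alpha'_1,\alpha'_2+jP\}\in G_2(\nS)$, Proposition~\ref{lem:Main1.loop}~(2) yields that each such intersection is homeomorphic to an arc. This settles the corollary. The main obstacle is purely bookkeeping: one has to enumerate, for each of the seven equations in \eqref{intersect-3}, every pair of listed subtiles and classify it as ``same row'', ``consecutive row'', or ``far apart'', then invoke the correct instance of Lemma~\ref{intersect-1} or Lemma~\ref{intersect-2}. There is no conceptual difficulty beyond carefully matching the indices in Figure~\ref{fig:orderfig} to the triples appearing in the graph $G_3(\nS)$ of Figure~\ref{quad-graph}.
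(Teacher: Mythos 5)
Your proposal follows essentially the same route as the paper: the seven identities are read off from the set equation \eqref{eq:bgammaseteq} via the labels of the neighbor graph, the adjacency structure of the resulting circular order is verified by combining Lemma~\ref{intersect-1} (disjointness of non-adjacent translates) with Lemma~\ref{intersect-2} (nonemptiness of adjacent pairs), and the arc claim follows from Proposition~\ref{lem:Main1.loop}~(2). Your write-up is in fact somewhat more explicit than the paper's two-line proof about which case of which lemma handles which pair of subtiles, and the argument is correct.
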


\begin{proof}
By Lemma~\ref{intersect-1} and Lemma~\ref{intersect-2}, we conclude that the sets belonging to the union on the right hand side intersect if and only if they are adjacent in the order illustrated in Figure~\ref{fig:orderfig}. Their intersection is an arc by Proposition~\ref{lem:Main1.loop}.
\end{proof}

\begin{figure}[htbp]
\includegraphics[width=0.5 \textwidth]{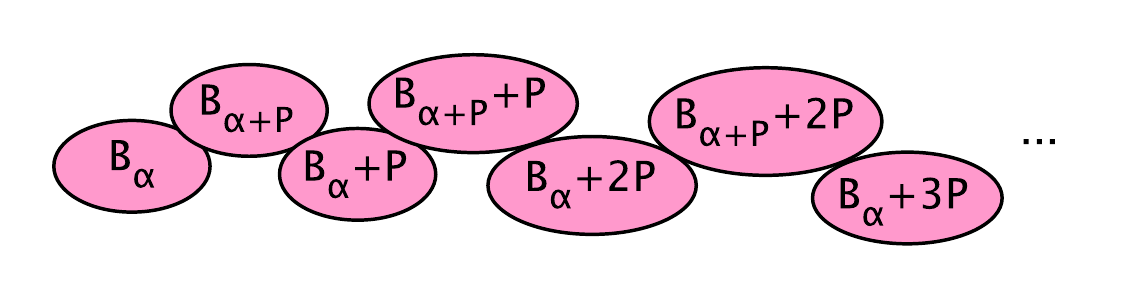}\caption{Order of the intersections on the right hand side of the identities in~\eqref{intersect-3}.\label{fig:orderfig}}
\end{figure}

\begin{proposition}~\label{prop:Main1.surface}
The decreasing sequence of regular partitionings $(\nQ_k)_{k\geq 1}$ of $\partial T$ defined in \eqref{2-partioning} satisfies the conditions in Theorem~\ref{surface}.  Hence, $\partial T$ is a $2$-sphere.
\end{proposition}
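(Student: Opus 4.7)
The plan is to apply Theorem~\ref{surface} to the sequence $(\nQ_k)_{k\ge 1}$. By Lemma~\ref{lem-partioning1} this is a decreasing sequence of regular partitionings of $\partial T$, and conditions~(1) and~(2) of Theorem~\ref{surface} are precisely the content of Lemma~\ref{condition-1-2}. Hence all the work lies in verifying condition~(3) at every level $k\ge 1$.

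For $k=1$ (with $U=\partial T$, so $\partial_{\partial T} U=\emptyset$), I would use the ordering $\BO_1,\ldots,\BO_{14}$ from Figure~\ref{HataGraph}, which is precisely the ordering verified in Lemma~\ref{check-partition-1} (noting that $\partial_{\partial T}\BO_j^\circ=\partial_{\partial T}\BO_j$ by Lemma~\ref{lem:commute}). For $k\ge 2$, a typical element of $\nQ_{k-1}$ has the form $V=f_{d_1\cdots d_{k-2}}(\B_\alpha)^\circ$ for some $\alpha\in\nS$, and its subdivision in $\nQ_k$ is the image under $f_{d_1\cdots d_{k-2}}$ of the canonical subdivision of $\B_\alpha$. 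Since $f_{d_1\cdots d_{k-2}}$ is a homeomorphism that commutes with $\partial_{\partial T}$ by Lemma~\ref{lem:commute}, condition~(3) for $V$ reduces to condition~(3) for $\B_\alpha$ itself, with $U=\B_\alpha$ and the subdivision furnished by Corollary~\ref{cor:intersect3}.

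For this reduced problem I propose the natural left-to-right linear ordering $U_1,\ldots,U_n$ of the subdivision pieces of $\B_\alpha$ displayed in Figure~\ref{fig:orderfig}. Lemma~\ref{intersect-1} and Corollary~\ref{cor:intersect3} together ensure that two non-adjacent pieces in this ordering are disjoint, while two adjacent pieces intersect in an arc, which is nondegenerate by Proposition~\ref{lem:Main1.loop}~(2). Hence the intersection $\partial_{\partial T} U_j\cap\partial_{\partial T} U_{j-1}$ contributes a nondegenerate arc for every $j\ge 2$, and $\partial_{\partial T} U_1\cap\partial_{\partial T}\B_\alpha$ is itself a nondegenerate arc that can be read off from $G_2(\nS)$.

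The main delicate point will be the connectedness of the full intersection $\partial_{\partial T} U_j \cap (\partial_{\partial T}\B_\alpha \cup \partial_{\partial T} U_1 \cup \cdots \cup \partial_{\partial T} U_{j-1})$ for $j\ge 2$: beyond the arc shared with $U_{j-1}$, the piece $U_j$ may also meet $\partial_{\partial T}\B_\alpha$ along one or two further arcs, and these arcs must share endpoints with the previous intersection in order for the union to stay connected. Such shared endpoints are necessarily $4$-fold intersection points of the tiling $T+\Z^3$ (Lemma~\ref{lem:2fourfold}), and their existence can be checked by a finite inspection of $G_3(\nS)$ along each of the seven set equations in~\eqref{intersect-3}, where the symmetry recorded in Lemma~\ref{3-4-fold} roughly halves the workload. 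Once this case analysis is completed, Theorem~\ref{surface} applies and yields that $\partial T$ is a $2$-sphere.
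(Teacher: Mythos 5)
Your overall architecture coincides with the paper's: conditions (1) and (2) of Theorem~\ref{surface} from Lemma~\ref{condition-1-2}, the case $k=1$ of condition (3) from Lemma~\ref{check-partition-1}, and the reduction of every level $k\ge 2$ to the canonical subdivision of a single $\B_\alpha$ via the commuting property of Lemma~\ref{lem:commute} together with the ordering of Corollary~\ref{cor:intersect3}. The divergence occurs exactly at the step you yourself flag as ``the main delicate point,'' and there your proposal is only a plan, not a proof: you would verify connectedness of $\partial_{\partial T} U_j\cap(\partial_{\partial T}\B_\alpha\cup\partial_{\partial T} U_1\cup\cdots\cup\partial_{\partial T} U_{j-1})$ by inspecting, for each of the seven set equations in \eqref{intersect-3} and each piece $U_j$, which arcs of $\partial_{\partial T}\B_\alpha$ the piece meets and whether their endpoints (which are $4$-fold points) glue them to $U_j\cap U_{j-1}$. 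That case analysis is plausible and would very likely succeed, but it is not carried out, and it is precisely the step on which the proposition rests.

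The paper avoids the case analysis entirely by a complementation argument. Since the closures of the pieces of $\nQ_2$ cover $\partial T$ and any intersection $\overline{U_j}\cap \overline{Y}$ with a piece $Y$ lying in another $\B_\beta$ is contained in $\B_{\alpha,\beta}\subset\partial_{\partial T}\B_\alpha$ (Lemma~\ref{lem:bdov2}), the only part of the simple closed curve $\partial_{\partial T} U_j$ \emph{not} already contained in $\partial_{\partial T}\B_\alpha\cup\partial_{\partial T} U_1\cup\cdots\cup\partial_{\partial T} U_{j-1}$ is the open arc shared with $U_{j+1}$ (and nothing at all when $j=\ell_\alpha$, by the linear ordering of Corollary~\ref{cor:intersect3}). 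Hence the intersection equals $\overline{\partial_{\partial T} U_j\setminus(\overline{U_j}\cap\overline{U_{j+1}})}$ for $j<\ell_\alpha$ and $\partial_{\partial T} U_j$ for $j=\ell_\alpha$; a simple closed curve minus an open subarc is a closed arc, so connectedness and nondegeneracy are automatic. You should either adopt this argument or actually execute your finite inspection of $G_3(\nS)$; as written, the key connectedness claim remains unproved.
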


\begin{proof}
Throughout this proof, $\partial T$ is our ambient space. Conditions $(1)$ and $(2)$ of Theorem~\ref{surface} are satisfied by Lemma \ref{condition-1-2}. Lemma \ref{check-partition-1} and the remark after it shows that condition $(3)$ of Theorem~\ref{surface} is true for $k=1$. 

We now prove that condition $(3)$ of Theorem~\ref{surface} holds for $k= 2$. Indeed, $\nQ_1=\{\B_{\alpha}^\circ\,;\,\alpha\in \nS\}$ and for each $\alpha\in \nS$ the elements of the refinement $\nQ_2$ contained in $\B_{\alpha}^\circ$  are given by $(U_j)_{j=1}^{\ell_{\alpha}}$, where the sets $M \overline{U_1},\ldots,M \overline{U_{\ell_{\alpha}}}$ are the sets in the union on the right hand side of \eqref{intersect-3}. By the linear ordering of these sets proved in Corollary~\ref{cor:intersect3}
\[
\partial U_j\cap (\partial \B_{\alpha}^\circ \cup \partial U_1\cup \dots\cup \partial U_{j-1}) =
\begin{cases}
\overline{\partial U_j \setminus (\overline{U_j}\cap \overline{U_{j+1}})},& j <\ell_\alpha,\\
\partial U_j,& j=\ell_\alpha.
\end{cases}
\]
Since $\partial U_j$ is a simple closed curve by Lemma~\ref{condition-1-2}~(1), and $\overline{U_j}\cap \overline{U_{j+1}}$ is a subarc of this curve by Corollary~\ref{cor:intersect3}, we conclude that $\partial U_j\cap (\partial \B_{\alpha}^\circ \cup \partial U_1\cup \dots\cup \partial U_{j-1})$ is an arc or a simple closed curve, and, hence, nondegenerate and connected.

Let now $k > 2$ be arbitrary. Each $U \in \nQ_{k-1}$ is of the form  $U=f(\B_\alpha)^\circ$ for some $f=f_{d_1\dots d_{k-2}}$ with $d_1,\ldots, d_{k-2}\in \D$ and $\alpha\in\nS$. By Lemma~\ref{lem:commute} we have $U=f(\B_\alpha^\circ)$ and thus if  $(U_j)_{j=1}^{\ell_{\alpha}}$ are the elements of $\nQ_2$ contained in $\B_{\alpha}^\circ$ then $(f(U_j))_{j=1}^{\ell_{\alpha}}$ are the elements of $\nQ_{k}$ contained in $U$.  Therefore, again by Lemma~\ref{lem:commute}
\[
\partial f(U_j)\cap (\partial U \cup \partial f(U_1)\cup \dots\cup \partial f(U_{j-1}))
=
f(\partial U_j\cap (\partial \B_{\alpha}^\circ \cup \partial U_1\cup \dots\cup \partial U_{j-1})),
\]
and the elements of  $\nQ_{k}$ contained in $U$ satisfy condition $(3)$ of Theorem~\ref{surface} because the elements of $\nQ_2$ contained in $\B_{\alpha}^\circ$ satisfy it. Thus condition $(3)$ of Theorem~\ref{surface} holds for $k> 2$ as well.

Summing up we may apply Theorem~\ref{surface} and the result follows.
\end{proof}

Proposition~\ref{prop:Main1.surface} proves Theorem~\ref{Main-1}~\eqref{Main1.sphere}. Finally,  the next proposition implies Theorem~\ref{Main-1}~\eqref{Main1.disk}.

\begin{proposition}\label{lem:Main1.disk}
Assume that $\alpha \in \Z^3\setminus\{0\}$. Then $\B_{\alpha}$ is homeomorphic to a closed disk if $\alpha\in \nS$ and empty otherwise.
\end{proposition}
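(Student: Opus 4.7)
The plan is to apply the Jordan--Sch\"onflies theorem to the simple closed curve $L_\alpha=\partial_{\partial T}\B_\alpha$ sitting inside the $2$-sphere $\partial T$, and then identify $\B_\alpha$ with the closure of one of the two complementary domains. Both ingredients I need are already in place: by Proposition~\ref{prop:Main1.surface} the boundary $\partial T$ is a $2$-sphere, and by Proposition~\ref{bourdary-divide} the set $L_\alpha$ is a simple closed curve for every $\alpha\in\nS$.

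If $\alpha\in\Z^3\setminus(\nS\cup\{0\})$, then $\B_\alpha=T\cap(T+\alpha)$ is empty by the very definition of $\nS$ in \eqref{eq:neigh}, using $\Z[M,\D]=\Z^3$ from Lemma~\ref{ABCZ3}. So the only remaining task is to treat $\alpha\in\nS$. Working in the ambient space $\partial T$, the Sch\"onflies theorem decomposes $\partial T\setminus L_\alpha$ into two connected components $D_1,D_2$, each homeomorphic to an open $2$-disk and with closure $\overline{D_i}$ a closed $2$-disk bounded by $L_\alpha$. The claim I will then prove is that $\B_\alpha^\circ\in\{D_1,D_2\}$, from which $\B_\alpha=\overline{\B_\alpha^\circ}=\overline{D_i}$ is a closed disk.

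To establish this claim I first observe that $\B_\alpha^\circ=\B_\alpha\setminus L_\alpha$, since $\B_\alpha$ is closed in $\partial T$ with $\partial_{\partial T}\B_\alpha=L_\alpha$ (Lemma~\ref{lem:bdov2}). This immediately makes $\B_\alpha^\circ$ open in $D_1\cup D_2$, and combining the same identity with $\overline{\B_\alpha^\circ}=\B_\alpha$ (Lemma~\ref{inner-bound1}) I can check in one line that $\B_\alpha^\circ$ is also closed in $D_1\cup D_2$: any point of $(D_1\cup D_2)\cap\overline{\B_\alpha^\circ}=(D_1\cup D_2)\cap\B_\alpha$ lies in $\B_\alpha\setminus L_\alpha=\B_\alpha^\circ$. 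Connectedness of $D_1$ and $D_2$ then leaves only the four possibilities $\emptyset,D_1,D_2,D_1\cup D_2$ for $\B_\alpha^\circ$. The value $\emptyset$ is excluded since $\B_\alpha=\overline{\B_\alpha^\circ}\neq\emptyset$, and the value $D_1\cup D_2$ would force $\B_\alpha=\partial T$, contradicting the fact that $\{\B_\beta^\circ;\,\beta\in\nS\}$ is a disjoint family of nonempty sets (Lemmas~\ref{lem-partioning1} and~\ref{inner-bound1}) and the existence of some $\beta\in\nS\setminus\{\alpha\}$. Hence $\B_\alpha^\circ=D_i$ for some $i\in\{1,2\}$, as desired.

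I do not foresee a genuine obstacle: all the hard topology (Bing's characterization of $S^2$ applied to $\partial T$, the structure of $L_\alpha$, and the boundary identity $\partial_{\partial T}\B_\alpha=L_\alpha$) has already been done earlier in Section~\ref{sec:topo}. The only mildly delicate step is the clopen property of $\B_\alpha^\circ$ inside $D_1\cup D_2$, but as indicated above that is an immediate consequence of $\overline{\B_\alpha^\circ}=\B_\alpha$ together with $D_1\cup D_2=\partial T\setminus L_\alpha$.
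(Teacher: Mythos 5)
Your proof is correct and follows essentially the same route as the paper: both apply the Sch\"onflies theorem to the simple closed curve $\partial_{\partial T}\B_\alpha=L_\alpha$ inside the $2$-sphere $\partial T$ (Propositions~\ref{prop:Main1.surface} and~\ref{bourdary-divide}) and dispose of $\alpha\notin\nS$ via the definition \eqref{eq:neigh}. Your clopen argument identifying $\B_\alpha^\circ$ with one of the two complementary domains merely spells out a step the paper leaves implicit in its one-line invocation of Sch\"onflies.
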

 
\begin{proof}
For $\alpha\in \nS$, the intersection $\B_\alpha$ is a subset of the $2$-sphere $\partial T$ (by Proposition~\ref{prop:Main1.surface}) whose boundary $\partial_{\partial T}\B_\alpha$ is a simple closed curve (by Proposition~\ref{bourdary-divide}). Thus  $\B_{\alpha}$ is homeomorphic to a closed disk by the Sch\"onflies Theorem. If $\alpha\not \in \nS$, then $\B_{\alpha}=\emptyset$ by the definition of $\nS$ in \eqref{eq:neigh}.
\end{proof}

\section{Perspectives}\label{sec:perspective}

We conclude this paper by mentioning some topics for further research. A first natural question is whether each self-affine tile satisfying the conditions of Theorem~\ref{Main-1} is homeomorphic to a $3$-ball. 
For a single example this can be checked by an algorithm given by Conner and Thuswaldner~\cite[Section~7]{ConnerThuswaldner0000}. However, we currently do not know how to do this for a whole class of tiles. Although Conner and Thuswaldner~\cite[Section~8.2]{ConnerThuswaldner0000} exhibited a self-affine tile whose boundary is a $2$-sphere but which is itself not a $3$-ball (a self-affine {\em Alexander horned sphere}), we conjecture the following to be true.

\begin{conjecture}
A self-affine tile that satisfies the conditions of Theorem~\ref{Main-1} is homeomorphic to a $3$-ball.
\end{conjecture}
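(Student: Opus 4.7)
The plan is to deduce the conjecture from the already established fact that $\partial T$ is a $2$-sphere (Theorem~\ref{Main-1}~\eqref{Main1.sphere}) by upgrading this to a \emph{tame} embedding $\partial T \hookrightarrow \R^3$. Tameness together with the tame Sch\"onflies theorem (Alexander) would imply that $\partial T$ bounds a topological $3$-ball on each complementary side in $\R^3$; since $T$ is the closure of the bounded component of $\R^3 \setminus \partial T$ (as a compact attractor with $T = \overline{T^\circ}$), one then concludes $T \cong B^3$.

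By Cannon's characterization~\cite{Cannon73}, tameness of $\partial T$ is equivalent to the $1$-ULC property of both $T^\circ$ and $\R^3 \setminus T$: for every $\varepsilon > 0$ there exists $\delta > 0$ such that every loop of diameter at most $\delta$ in the respective component bounds a singular disk of diameter at most $\varepsilon$ in the same component. The self-affine structure should reduce this to a finite combinatorial check, because small loops near a point $x \in \partial T$ can, after applying suitably many iterates of the inverse IFS, be assumed to lie in a neighborhood whose local combinatorics is controlled by the finite graphs $G(\nS)$, $G_2(\nS)$ and $G_3(\nS)$. By Corollary~\ref{cor:O} this neighborhood is combinatorially modelled on a patch of the bitruncated cubic honeycomb, which is visibly tame, so one would hope to transfer the $1$-ULC property from the latter to~$T$ by means of a level-wise comparison of the two tilings.

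A more direct route is via the upper semi-continuous decomposition approach of Conner and Thuswaldner~\cite{ConnerThuswaldner0000}. The address map $\pi\colon \D^{\N} \to T$, $\pi(d_1,d_2,\ldots) = \sum_{j \ge 1} M^{-j} d_j$, induces a decomposition of $\D^{\N}$ whose nontrivial fibers are enumerated by Theorem~\ref{Main-1}~\eqref{Main1.loop}--\eqref{Main1.empty} and encoded by the graphs $G_2(\nS)$, $G_3(\nS)$ displayed in Table~\ref{tab:triple-graph} and Figure~\ref{quad-graph}. Embedding $\D^{\N}$ as a Cantor set in the boundary of a closed $3$-ball $B$, extending the decomposition across $B$ using the graph-directed structure of Definition~\ref{def:powerG2G3G4}, and applying the Armentrout--Bing cellular decomposition theorem would identify $T$ with the quotient of $B$ by a cellular shrinkable decomposition, hence with a $3$-ball.

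The principal difficulty is to certify cellularity and shrinkability \emph{uniformly} across the whole family of $ABC$-tiles satisfying the hypotheses of Theorem~\ref{Main-1}. While Lemma~\ref{3-4-fold} shows that the combinatorics of multiple intersections is rigid across the family, the remark following Corollary~\ref{cor:O} stresses that the fine subdivision of each $\B_\alpha$ under \eqref{L-foldGra3} varies with $A$, $B$, $C$. The single-example algorithm of~\cite[Section~7]{ConnerThuswaldner0000} must therefore be upgraded to a parametric shrinking argument that rules out the wild-arc phenomena realized by the Alexander-horned-sphere tile of~\cite[Section~8.2]{ConnerThuswaldner0000}. Producing such a uniform cellularity and shrinkability check, presumably by induction on the subdivision level and exploiting the circular-chain structure of Lemma~\ref{lem:chain} together with the $2$-sphere boundary provided by Proposition~\ref{prop:Main1.surface}, is where we expect the hard work to lie.
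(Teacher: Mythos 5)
There is a fundamental issue here: the statement you are asked to prove is stated in the paper as a \emph{conjecture}, and the authors explicitly say that they do not know how to establish it for the whole class of tiles. Your text is accordingly not a proof but a research programme, and you concede as much in your last paragraph (``is where we expect the hard work to lie''). Every route you sketch bottoms out at exactly the step that is open: verifying tameness of the embedding $\partial T\hookrightarrow\R^3$ (equivalently, Cannon's $1$-ULC condition on both complementary domains), or verifying cellularity and shrinkability of the induced decomposition. None of these verifications is carried out, and no candidate singular disks, shrinking homeomorphisms, or $\delta$--$\varepsilon$ estimates are produced.

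The gap is not merely one of missing detail; the intermediate claims you lean on cannot do the work you assign them. The paper's own reference to a self-affine tile whose boundary is a $2$-sphere but which is an Alexander horned ball shows that the conclusions of Theorem~\ref{Main-1} --- the sphere boundary, the disk/arc/point structure of the $\ell$-fold intersections, and the finite graphs $G(\nS)$, $G_2(\nS)$, $G_3(\nS)$ --- do not by themselves imply tameness; wildness is a property of the embedding, invisible to this combinatorial data. Likewise, Corollary~\ref{cor:O} provides a homeomorphism between $\partial T+\Z^3$ and $\partial O+\Lambda$ as topological spaces, not an ambient homeomorphism of $\R^3$, so the ``visible tameness'' of the bitruncated cubic honeycomb does not transfer by a ``level-wise comparison'': one would have to construct an ambient extension, which is again equivalent to the tameness one is trying to prove. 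Finally, the claim that $T$ is the closure of the bounded complementary component of $\partial T$ deserves an argument (connectedness of $T^\circ$ and a Jordan--Brouwer separation step), though that is a minor point next to the main one. As it stands, the proposal restates the conjecture in the language of decomposition theory rather than resolving it.
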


Besides that we think that using the results of Bing~\cite{Bing51} and Kwun~\cite{Kwun61} one could prove more topological results for self-affine tiles (and attractors of iterated function systems in the sense of Hutchinson~\cite{Hutchinson81} in general). In particular, getting information on the topology of $3$-dimensional Rauzy fractals (see {\it e.g.}~\cite{EiItoRao06,ItoRao06a,SiegelThuswaldner10}) would be interesting. Even topological results for higher dimensional self-affine tiles should be tractable by using modifications of our theory. However, particularly for manifolds of dimension $4$ and higher, according to Kwun's result, one has to deal with more complicated conditions which lead to new challenges.

Let $T$ be a $2$-dimensional self-affine tile. Recently, Akiyama and Loridant~\cite{AkiyamaLoridant11} provided H\"older continuous surjective mappings $h:\mathbb{S}^1 \to \partial T$ whose H\"older exponent, which is defined in terms of the Hausdorff dimension of $\partial T$, is optimal. This has been considered in a more general framework in Rao and Zhang~\cite{RaoZhang16}. We formulate the following problem for mappings from the $2$-sphere to the boundary of a $3$-dimensional self-affine tile.

\begin{problem}
For a $3$-dimensional self-affine tile whose boundary is a $2$-sphere find a homeomorphism $h:\mathbb{S}^2 \to \partial T$ which is H\"older continuous. What is the optimal H\"older exponent for such a homeomorphism?
\end{problem}

\subsection*{Acknowledgement}
We thank the anonymous referee for many valuable comments. Among other suggestions, the referee pointed out the relation to polyhedral geometry and crystallography.

\bibliographystyle{siam}  
\bibliography{biblio}

\end{document}